\documentclass[11pt]{article}
\usepackage{PRIMEarxiv}

\usepackage{amsmath,amssymb,amsfonts,amsthm,bbm,mathrsfs,verbatim} 
\usepackage[misc]{ifsym}
\usepackage{graphics}                 
\usepackage{graphicx}
\usepackage{caption}
\usepackage{subcaption}
\usepackage{float}
\usepackage{array}
\usepackage{cancel}
\usepackage{booktabs}
\usepackage{color}                    
\usepackage{hyperref}                 
\usepackage{makeidx}
\usepackage{natbib}
\usepackage{geometry}  
\usepackage{setspace}
\usepackage{bm}
\usepackage{authblk}
\usepackage{hvfloat}
\usepackage{neuralnetwork}
\usepackage{algorithm}
\usepackage{algpseudocode}

\usepackage{multirow}
\usepackage{tikz}

\usepackage{fancyhdr}

\newcommand{\norm}[1]{\left\lVert#1\right\rVert}

\newtheorem{theorem}{Theorem}[section]
\newtheorem{proposition}[theorem]{Proposition}

\theoremstyle{definition}
\newtheorem{remark}[theorem]{Remark}
\newtheorem{definition}[theorem]{Definition}
\newtheorem{example}[theorem]{Example}

\def\N{{\mathbb N}}

\title{ Learning Contractive Integral Operators with Fredholm Integral Neural Operators}
\author[1]{Kyriakos C. Georgiou}
\author[2,\thanks{{Corresponding author: \texttt{constantinos.siettos@unina.it}}} 
]{Constantinos Siettos}
\author[3,\thanks{{Corresponding author: \texttt{ayannaco@aueb.gr}}} 
]{Athanasios N.\ Yannacopoulos}

\affil[1]{Department of Electrical Engineering and Information Technologies, \textit{University of Naples “Federico II”}, Naples, Italy }
\affil[3]{Dipartimento di Matematica e Applicazioni ``Renato Caccioppoli'', \textit{University of Naples “Federico II”}, Naples, Italy }
\affil[4]{Department of Statistics and Stochastic Modelling and Applications Laboratory, \textit{Athens University of Economics and Business}, Athens, Greece }

\graphicspath{{figures/}}

\begin{document}

\maketitle

\begin{abstract}
We generalize the framework of Fredholm Neural Networks, to learn  non-expansive integral operators arising in Fredholm Integral Equations (FIEs) of the second kind in arbitrary dimensions. We first present the proposed  Fredholm Integral Neural  Operators (FREDINOs), for FIEs and prove that they are universal approximators of linear and non-linear integral operators and corresponding solution operators. We furthermore prove that the learned operators are guaranteed to be contractive, thereby strictly satisfying the mathematical property required for the convergence of the fixed point scheme. Finally, we also demonstrate how FREDINOs can be used to learn the solution operator of non-linear elliptic PDEs, via a Boundary Integral Equation (BIE) formulation. We assess the proposed methodology numerically, via several benchmark problems: linear and non-linear FIEs in arbitrary dimensions, as well as a non-linear elliptic PDE in 2D. Built on tailored mathematical/numerical analysis theory, FREDINOs offer high-accuracy approximations and interpretable schemes, making them well suited for scientific machine learning/numerical analysis computations.
\end{abstract}

{\bf Keywords}: Fredholm neural networks, Operator learning,  Numerical analysis, Deep neural networks, Contractive integrals \\

\section{Introduction}
In recent years, there has been rapidly growing interest in neural operators as a flexible and powerful framework for learning operators between function spaces, attracting considerable interest across scientific computing and applied mathematics. Following DeepONet \cite{lu2021learning}, which marked a new research era in operator learning with machine learning, a large body of work focused on learning evolution and solution operators for PDEs and dynamical systems, including Fourier NOs \cite{li2020fourier,kovachki2023neural}, Graph NOs \cite{li2020multipole}, Riemannian NOs \cite{peyvan2024riemannonets}, and, more recently, RandONets \cite{fabiani2025enabling}. The neural-operator literature has been driven primarily by methods that learn solution maps, or time-evolution operators associated with PDEs. By contrast, work that explicitly formulates the task as learning integral operators is much more limited, with notable works including Fourier NOs (FNOs) \cite{li2020fourier,kovachki2023neural}, Graph-NOs \cite{li2020multipole} and more recently, Neural Integral Equations \cite{zappala2024learning}. While DeepONets \cite{lu2021learning} are formulated as general operator-learning architectures, they can also be interpreted within the framework of integral-operator learning whenever the target solution map admits an integral representation. In such cases, the branch and trunk networks implicitly define a separable approximation of the kernel, leading to a finite-rank representation of the corresponding integral operator. In this context, Fourier NOs \cite{li2020fourier,kovachki2023neural} parametrize the integral kernel directly in Fourier space, while Graph NOs \cite{li2020multipole} learn kernel integral operators by combining a Nystrom
approximation with domain truncation, implemented with graph neural networks. In \cite{kovachki2023neural}, the authors provide a universal approximation theorem for these operators and show that they are discretization‑invariant, meaning the same model generalizes across different resolutions without retraining. 
In \cite{solodskikh2023integral}, the authors proposed ``Integral Neural Networks'', a class of deep networks that replace conventional discrete weight tensors with continuous weight functions and layer transformations defined via integral operations over hypercubes. More recently, in \cite{zappala2024learning}, the authors presented ``Neural Integral Equations'', which consist of replicating a fixed-point argument, learning the unknown, non-linear integral kernel using an iterative solver until convergence of the iterations. These approaches, are used as end-to-end, black-box operator approximators trained by nonlinear (typically gradient-based) optimization of network weights.

Here, building on our previous work on Fredholm NNs \cite{georgiou2025fredholm, georgiou2025fredholm2}, we advance both theoretically and computationally our numerical analysis-informed Fredholm NNs to the integral operator learning framework. Fredholm Neural Networks (FNNs/ Fredholm NNs) \cite{georgiou2025fredholm, georgiou2025fredholm2}, provide an interpretable numerical analysis-informed machine-learning framework for solving and learning linear and nonlinear integral equations through Krasnosel'skii--Mann fixed-point iterations. In this setting, the resulting scheme emulates a deep neural network whose depth and width, weights and biases are prescribed by the underlying mathematical theory \cite{georgiou2025fredholm}, while for the solution of the inverse problem the integral kernel is explicitly parameterized and trained end-to-end \cite{georgiou2025fredholm2}. This way, Fredholm NNs bridge the gap between fully black-box deep neural network-learning approaches and methods informed by numerical analysis.  It is worth recalling that the idea of constructing neural networks that emulate classical numerical analysis schemes can be traced back to \cite{rico1992discrete}, where the Runge--Kutta method was represented as a deep neural network and subsequently employed for the learning of discretized PDEs. This perspective has recently received renewed attention in the context of numerical analysis grounded machine learning, where neural architectures are designed so as to reflect the structure of established iterative and discretization methods. Building on the idea of Runge--Kutta neural networks, in \cite{doncevic2024recursively} proposed a recursively recurrent neural network (R2N2) superstructure that learns numerical fixed-point algorithms, including Krylov, Newton--Krylov, and Runge--Kutta schemes. Along similar numerical-analysis-informed lines, recent studies have also considered neural network architectures that can be systematically interpreted through existing numerical schemes, as in \cite{datar2025systematic} for linear dynamical systems. Within this line of research, in \cite{bacho2025operator}, the authors presented the Cholesky Newton-Kantorovich Neural Operator, a neural operator learning paradigm that emulates Newton–Kantorovich–type iterations by learning the Cholesky factors of the linearized PDE operator, enabling machine‑precision accuracy on both forward and inverse nonlinear PDE problems. Finally, in \cite{fabiani2025randonets}, we have proposed RandONets which may be viewed as an interpretable DeepONet variant, in which the branch and trunk networks are constructed from single-layer neural networks, while training is carried out through linear-algebra-based regularization techniques, such as Tikhonov regularization, and pivoted QR decomposition with regularization, thereby yielding a more structured and transparent operator-learning framework. More recently, RandONets have been used to learn operators of PDEs locally from spatio-temporal patches, and have been coupled with Newton-Krylov iterative methods to perform system level tasks \cite{fabiani2025enabling}.
 
Within this context, the proposed NO that we call Fredholm Integral Neural Operator (FREDINO) bridges custom/``white-box'' interpretable deep neural network architectures with a fixed point training scheme that learns and rigorously guarantees the resulting learned operator to be a contraction as required. In the FREDINO, the parameters, i.e., weights, biases, and activation functions, directly correspond to specific components of the integral equation. Consequently, learning the integral operator amounts to finding the optimal parameter values that define a Fredholm NN whose outputs align with the training data. Our scheme ensures that the learned operator is (locally) non-expansive. We assess this property also on unseen resolutions during the model testing phase. With the proposed approach, we are not only learning an operator that satisfies the specific integral equation for any input and output functions; we are essentially learning the entire method of successive approximations algorithm.
\par This paper is organized as follows: in section \ref{sec2} we provide the necessary theoretical background pertaining to integral operators, Fredholm Integral Equations and Fredholm Neural Networks. In section \ref{sec3} we prove that the proposed Fredholm Neural Operators are universal approximators for contractive integral operators and discuss how this setting ensures the learned integral operators are contractive. In section \ref{pde-sec}, we present an application of the FREDINO to non-linear elliptic PDEs via potential theory. Finally, in section \ref{sec5}, we assess FREDINOs via various linear and non-linear integral operators of Fredholm-type in aribitary dimensions as well as non-linear PDEs in 2D. We conclude and discuss further research directions in \ref{sec6}.

\section{Mathematical formulation and background}\label{sec2}
In this work, we will be considering both linear and nonlinear integral operators. We begin by considering linear integral operators of the form:
    \begin{equation}\label{OPERATOR-linear}
        (\mathcal{T}f)(x)  =   \int_{\mathcal{D}}K(x,z)f(z)dz,
    \end{equation}
    where $x,z \in  \mathcal{D}$, $\mathcal{D} \subset \mathbb{R}$ is a bounded and compact domain, $K:\mathcal{D} \times \mathcal{D} \rightarrow \mathbb{R}$ is the kernel function, $f: \mathcal{D} \rightarrow \mathbb{R}^d$, and the corresponding nonlinear operator:
    \begin{eqnarray}\label{OPERATOR-nlinear}
        (\mathcal{T}f)(x)  = \int_{\mathcal{D}}K(x,z)G(f(z))dz,
    \end{eqnarray}
where $G: \mathbb{R} \rightarrow\mathbb{R}$ is a Lipschitz function. Note that we will also use the notation $\mathcal{T}_K$ and $\mathcal{T}_{K,G}$, wherever it is important to highlight the specific choices. Such operators arise for example in Fredholm Integral Equations of the second kind, which are of the form:
\begin{eqnarray}\label{ie}
	f(x) = g(x) + \int_{\mathcal{D}}K(x,z) f(z)dz.
\end{eqnarray}
We then consider nonlinear integral operators of the form
\begin{eqnarray}\label{nl-ie-def}
    f(x) = g(x) + \int_{\mathcal{D}}K(x,z) G(f(z))dz, \,\,\, g: \mathcal{D} \rightarrow \mathbb{R}^d.
\end{eqnarray}

\par Throughout this paper, we will be considering the Hilbert space ${\mathcal H}:=L^{2}(\mathcal{D} ; 
\mathbb{R}^d)$ endowed with the inner product $\langle f, g \rangle =\int_{\mathcal{D}} f(x) g(x) dx$. (The results below are generalizable to other spaces, e.g., $\mathcal{C}(\mathcal{D}, \mathbb{R}^d)$ or Lebesgue spaces). We also consider the space  
\begin{equation}
\mathcal{Z} = L^{2}(\mathcal{D} \times \mathcal{D})
:=
\left\{
K : \mathcal{D} \times \mathcal{D} \to \mathbb{R}
\;\middle|\;
\int_{\mathcal{D}} \int_{\mathcal{D}} |K(x,z)|^{2} \, dx \, dz < \infty
\right\},
\end{equation}
with corresponding norm
\begin{equation}
    \|K\|_{L^{2}(\mathcal{D} \times \mathcal{D})}=\Big(\int_D \int_D |K(x,z)|^2 dx\, dz \Big)^{1/2},
\end{equation}
so that for the kernels we have $K \in \mathcal{Z}= L^{2}(D \times D)$. Therefore, we define the set:
\begin{equation}
{\mathbb T}:=\{{\cal T}_{K} : {\cal H} \to {\cal H} \,\, : \,\, \exists K \in {\mathcal Z} \quad \mbox{such that} \quad {\cal T}_{K}u(x)=\int_{{\cal D}} K(x,z) u(z) dz\},
\end{equation}
consisting of compact operators. Extensions are of course possible.
Importantly, using the Cauchy-Schwarz inequality, we have :
\begin{equation}
    | \big(\mathcal{T} u\big)(x) |\leq \|K\|_{L^2(\cal D)}\, \|u\|_{L^2(\cal D)}.
\end{equation}
Squaring and integrating the above w.r.t to $x$, we get:
\begin{equation}
   \int_D | \big(\mathcal{T}_{K} u\big)(x) |^2\, dx \leq \Big(\int_D\|K(x,\cdot)\|^2_{L^2(\cal D)} dx \Big) \|u\|^2_{L^2(\cal D)} = \|K \|^2_{L^2(\cal D \times \cal D)} \|u\|^2_{L^2(\cal D)}.
\end{equation}
Therefore, we have:
\begin{equation}
 \| \mathcal{T}_K u\|^2_{L^2(\cal D)}  \leq \|K\|^2_{L^2(\cal D \times \cal D)} \, \|u\|^2_{L^2(\cal D)}.
 \label{eq:supremuminequality}
\end{equation}
Now taking the \textit{supremum} over all $u\neq 0$, we get:
\begin{equation}
    \|\mathcal{T}_K\|_{L^2 \to L^2}=\sup_{u \neq 0}\frac{\|\mathcal{T}_K\, u\|_{L^2(\cal D)}}{\|u\|_{L^{2}(\cal D)}}\le \|K\|_{L^2(\cal D \times \cal D)}.
\end{equation}
%

\begin{remark}
Since $\mathcal{D}$ is bounded and compact, in the case $K$ is continuous, we will also be considering the Banach space $\mathcal{X}:= \mathcal{C}\big(\mathcal{D}; \mathbb{R}^d \big)$, endowed with the norm $\|f(x)\|_{\infty} = \sup_{x \in \mathcal{D}} |f(x)|$, with $x \in \mathbb{R}^d$. Then, the main results that are shown below will also hold in for $\mathcal{T}_K : \mathcal{X}\rightarrow \mathcal{X}$.
\end{remark}
Hereinafter, we will use $\| \cdot \|$ to represent the norms in either of the aforementioned spaces, using the subscript when necessary to distinguish for clarity. 
\color{black}

In this work, we tackle the problem of learning the integral operator by modelling the unknown kernel function $K$ and non-linearity $G$ (using neural networks) from information of the solution $f$ of the equations \eqref{ie} or \eqref{nl-ie-def}, to provide a consistent finite‐dimensional approximation of the integral operator. We will see how we can do this using a forward pass through Fredholm Neural Networks. Therefore, we provide background on non-expansive integral operators, which are used in Fredholm NNs, below.

\subsection{Non-expansive integral operators and FIEs}
We start by recalling the definition of a contraction operator.
\begin{definition}[Contractive operator]
    Consider an operator $\mathcal{T}: \mathcal{X} \rightarrow \mathcal{X}$ (equivalently $\mathcal{T}:\mathcal{H} \rightarrow \mathcal{H}$). We say that $\mathcal{T}$ is a $q-$contraction if:
    \begin{eqnarray}
        \|\mathcal{T}f_1 - \mathcal{T}f_2\| \leq q \|f_1 - f_2\|,
    \end{eqnarray}
    for some $q < 1$. If $q=1$, then we say the operator is non-expansive.
\end{definition}

\par When $\mathcal{T}$ is a contraction, it is well known that the FIE can be solved using the Neumann series $f(x) = \sum_{n=0}^{\infty} (\mathcal{T}^{(n)}g)(x)$, whose convergence is guaranteed by the Banach Fixed-Point Theorem. Simultaneously, such equations can also be solved numerically utilizing appropriate discretization schemes, such as the Nyström method. Furthermore, we recall the Krasnoselskii-Mann (KM) method (see e.g., \cite{kravvaritis2020variational} for further details). In the particular case, when $q=1$, we consider the Hilbert space ${\cal H}$. 
\begin{proposition}[Krasnosel'skii-Mann (KM) method]\label{km-method}  Consider the 
FIEs \eqref{ie} or \eqref{nl-ie-def} defined by a non-expansive operator $\mathcal{T}:{\cal H}\rightarrow \mathcal{H}$ and a sequence $\{\kappa_n\}, \kappa_n \in (0,1)$,  such that $\sum_n \kappa_n(1-\kappa_n) = \infty$. Then, the iterative scheme:
\begin{eqnarray}\label{km-it}
    f^{(n+1)}(x) = (1-\kappa_n)f^{(n)}(x) + \kappa_n (g(x) + \mathcal{T} f^{(n)})(x) = f^{(n)}(x) + \kappa_n \big(g(x) + (\mathcal{T}f^{(n)})(x) -f^{(n)}(x)\big) 
\end{eqnarray}
with $f_0(x) = g(x)$, converges to the fixed point solution of the FIE, $f^{*}(x)$.
\end{proposition}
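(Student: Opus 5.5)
The plan is to reduce the statement to the classical Krasnosel'skii--Mann theorem for averaged iterations of a non-expansive map in a Hilbert space. Define the affine operator $\mathcal{S}:\mathcal{H}\to\mathcal{H}$, $\mathcal{S}f := g + \mathcal{T}f$. Since $\mathcal{S}f_1-\mathcal{S}f_2 = \mathcal{T}f_1-\mathcal{T}f_2$, the map $\mathcal{S}$ is non-expansive whenever $\mathcal{T}$ is. Its fixed points are exactly the solutions of \eqref{ie} or \eqref{nl-ie-def}. The scheme \eqref{km-it} is then $f^{(n+1)} = (1-\kappa_n)f^{(n)} + \kappa_n \mathcal{S}f^{(n)}$. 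I will assume, as the statement implicitly does, that $\mathrm{Fix}(\mathcal{S})\neq\emptyset$, i.e.\ that a solution $f^*$ exists. Without this assumption the claim fails; a translation is a counterexample.

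\textbf{Fejér monotonicity.} Fix any $p\in\mathrm{Fix}(\mathcal{S})$. Apply the Hilbert-space identity
\[
\|(1-\kappa)a+\kappa b\|^2=(1-\kappa)\|a\|^2+\kappa\|b\|^2-\kappa(1-\kappa)\|a-b\|^2
\]
with $a=f^{(n)}-p$ and $b=\mathcal{S}f^{(n)}-p$, and use $\|\mathcal{S}f^{(n)}-p\|\le\|f^{(n)}-p\|$. This gives
\[
\|f^{(n+1)}-p\|^2\le\|f^{(n)}-p\|^2-\kappa_n(1-\kappa_n)\|f^{(n)}-\mathcal{S}f^{(n)}\|^2 .
\]
Hence $\|f^{(n)}-p\|$ is nonincreasing, and therefore convergent. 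Summing the inequality yields $\sum_n\kappa_n(1-\kappa_n)\|f^{(n)}-\mathcal{S}f^{(n)}\|^2<\infty$. Because $\sum\kappa_n(1-\kappa_n)=\infty$, it follows that $\liminf_n\|f^{(n)}-\mathcal{S}f^{(n)}\|=0$.

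\textbf{From $\liminf$ to $\lim$.} This step is the main obstacle. I will show that the residual $r_n:=\|f^{(n)}-\mathcal{S}f^{(n)}\|$ is nonincreasing. Write
\[
f^{(n+1)}-\mathcal{S}f^{(n+1)}=(1-\kappa_n)\bigl(f^{(n)}-\mathcal{S}f^{(n)}\bigr)+\bigl(\mathcal{S}f^{(n)}-\mathcal{S}f^{(n+1)}\bigr).
\]
Non-expansiveness bounds the second term by $\|f^{(n)}-f^{(n+1)}\|=\kappa_n r_n$. Hence $r_{n+1}\le(1-\kappa_n)r_n+\kappa_n r_n=r_n$. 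A nonincreasing sequence with $\liminf$ equal to $0$ converges to $0$, so $f^{(n)}-\mathcal{S}f^{(n)}\to0$ strongly.

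\textbf{Conclusion via Opial's argument.} The sequence $f^{(n)}$ is bounded, so it has weakly convergent subsequences. By the demiclosedness principle for $I-\mathcal{S}$ (Browder), every weak cluster point lies in $\mathrm{Fix}(\mathcal{S})$. Fejér monotonicity with respect to every fixed point, combined with Opial's lemma in $\mathcal{H}$, then shows that the whole sequence converges weakly to some $f^*\in\mathrm{Fix}(\mathcal{S})$. This is the sense of convergence in the general non-expansive case.

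\textbf{Strong convergence.} To upgrade to norm convergence I will exploit the structure of the operators considered here. For $\mathcal{T}=\mathcal{T}_K$ with $K\in\mathcal{Z}$, or the nonlinear variant with $G$ Lipschitz, $\mathcal{T}$ is compact. Weak convergence $f^{(n_k)}\rightharpoonup f^*$ therefore gives $\mathcal{S}f^{(n_k)}\to\mathcal{S}f^*$ in norm. Combined with $f^{(n)}-\mathcal{S}f^{(n)}\to0$, this yields $f^{(n_k)}\to f^*$ strongly. Fejér monotonicity with $p=f^*$ then forces the full sequence $f^{(n)}$ to converge to $f^*$ in norm. The starting point $f_0=g$ plays no role beyond giving a well-defined initial iterate.
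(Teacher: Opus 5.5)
The paper gives no proof of this proposition; it recalls the classical Krasnosel'skii--Mann theorem and points to the literature. Your argument is the standard proof: Fejér monotonicity, the monotone residual giving $r_n\to 0$, then Browder's demiclosedness and Opial's lemma. Up to weak convergence it is correct. You are also right that a solution must be assumed to exist. This already matters inside the paper's class: $K\equiv 1$ on $[0,1]$ gives a non-expansive $\mathcal{T}$ (the projection onto constants). If $\int_0^1 g\neq 0$, the iterates satisfy $\int f^{(n+1)}=\int f^{(n)}+\kappa_n\int g$, which diverges because $\sum_n\kappa_n=\infty$. In the same example with $\int g=0$, every $g+c$ is a fixed point and the iterates converge to $g+\int f_0$. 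So your closing remark is slightly off: $f_0$ selects which fixed point is reached.

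The gap is in the strong-convergence step for the nonlinear operator $\mathcal{T}_{K,G}$. A compact \emph{linear} operator maps weakly convergent sequences to norm-convergent ones. A nonlinear compact operator (one mapping bounded sets into relatively compact sets) need not, because the Nemytskii map $u\mapsto G(u)$ is not weakly sequentially continuous. For example, take $G(s)=|s|$ and $u_n(z)=\sin(2\pi n z)$ on $[0,1]$. Then $u_n\rightharpoonup 0$ but $G(u_n)\rightharpoonup 2/\pi$, so $\mathcal{T}_{K,G}u_n\to\frac{2}{\pi}\int_0^1K(\cdot,z)\,dz$. For a positive kernel this differs from $\mathcal{T}_{K,G}0=0$. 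Hence your step from $f^{(n_k)}\rightharpoonup f^*$ to $\mathcal{S}f^{(n_k)}\to\mathcal{S}f^*$ is unjustified in the nonlinear case.

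The fix is to use compactness directly and avoid weak limits:
\begin{itemize}
\item Since $(f^{(n)})$ is bounded and $\mathcal{T}$ maps bounded sets into relatively compact sets, some subsequence satisfies $\mathcal{S}f^{(n_k)}\to y$ in norm.
\item Because $f^{(n)}-\mathcal{S}f^{(n)}\to 0$, also $f^{(n_k)}\to y$.
\item Lipschitz continuity of $\mathcal{S}$ then gives $\mathcal{S}y=y$.
\item Fejér monotonicity with $p=y$ yields $f^{(n)}\to y$ in norm.
\end{itemize}
This route handles the linear and nonlinear cases uniformly, and it makes the demiclosedness/Opial step unnecessary for the strong result.
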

\par It is straightforward to see that, when $\mathcal{T}$ is a contraction, we can set $\kappa_n =1$, for all $n$, in which we can work on the original Banach space $\cal{X}$ and obtain:
\begin{eqnarray}\label{iterations}
	f^{(n)}(x)= g(x) + \int_{\mathcal{D}} K(x,z) f^{(n-1)}(z)dz, \,\,\ n \geq 1,
\end{eqnarray}
which converges to the fixed point solution. This is the Picard iteration (method of successive approximations), which is easily seen to coincide with the Neumann series expansion. 
\par The Fredholm NN framework presents a novel way of presenting DNNs, by establishing a connection between the model architecture and the KM and Picard iterations for solving FIEs. This theory has been developed in \cite{georgiou2025fredholm, georgiou2025fredholm2}. For completeness, we now present the main results in detail below.

\subsection{Feedforward Fredholm NNs}
As a first step, it is useful to define the following discretized version of the fixed point approximation, which is the basis for the construction of the Fredholm NN \cite{georgiou2025fredholm}. We note that in \cite{georgiou2025fredholm} the approach was introduced only for 1D FIEs; as mentioned above, this is extended in the current work as we will also be considering higher dimensional problems. Therefore the definitions below are generalized to account for this extension. 

\begin{definition}[Discretized Krasnoselskii-Mann operator]
Consider a function $f$ satisfying the FIE \eqref{ie} or \eqref{nl-ie-def}. Consider a $z-$grid $Z = \{z_1, \dots z_N \}$, with $z_i \in \mathbb{R}^d$, $|Z| = N$ and such that $x \in Z$. Then, we define the following discrete operator $(\mathcal{P}_N$ representing a single iteration of the KM-algorithm using the discretized version of the integral operator 
$\mathcal{T}$
(with fixed $g :\mathbb{R} \rightarrow \mathbb{R}$), by:
\begin{flalign}\label{disc-op}
	(\mathcal{P}_N f) (x) := (1-\kappa_n)f(x) &+ \kappa_n \big(g(x) + \sum_{i =1}^N K(x,z_j)f(z_j) \Delta z \big) \notag \\ &= \kappa_n g(x) + \sum_{j =1}^N f(z_j)\Big( K(x,z_{j})\kappa_n \Delta z + (1-\kappa_n) \mathbbm{1}_{\{z_j = x\}}\Big).
\end{flalign}
\end{definition}

\begin{remark}
   Note that in the definition above the grid $Z$ can be constructed in various ways, e.g., uniformly or using quasi-Monte Carlo sequences. Indeed, when considering FIEs in higher dimensions we will use a low discrepancy sequence, namely Sobol sampling for the numerical approximation to the integral. 
\end{remark}

\begin{definition}[$M-$layer fixed point estimate] \label{dd}
We define the $M-$layer fixed point estimate of the solution to the FIE as the following representation arising from applying the discretized KM operator $\mathcal{P}_N$ for the given choice of $g(x)$, $M$ times:
\begin{align} \label{m-operator}
f^{(M)}(x) = (\mathcal{P}_N^{(M)} g)(x) = \mathcal{P}_{N} \circ (\mathcal{P}_{N} \circ (\dots \circ (\mathcal{P}_{N}g)(x))) \end{align}
\end{definition}

\par The above definitions give rise to the connection between fixed point approximations and neural networks that is the basis of Fredholm NNs. Notice that the form of the discretized KM operator in \eqref{disc-op} matches the linear transformations that connect the output of one layer of a fully connected neural network to the next layer. Finally, composing the operator as in \eqref{m-operator} becomes equivalent to ``passing through'' the neural network to obtain the output, which is the fixed point approximation to the solution of the FIE.

\begin{proposition}
[Fredholm Neural Network construction]\label{DNN_construction}
Consider a KM constant $\kappa = \kappa_n$ for all $n$, and a discretization $Z = \{ z_1, \dots, z_N\}$ of the domain $\mathcal{D} \subset \mathbb{R}^d$. 
The $M$-layer approximation of the solution to the linear FIE \eqref{ie} can be implemented as a DNN with input $x \in \mathbb{R}^d$, $M$ hidden layers, a linear activation function and a single output node, where the weights and biases are given by:
\begin{flalign}
    W_1 = \big(\begin{array}{ccc}
		\kappa g(z_1), \dots, \kappa g(z_{N})
	\end{array}\big)^{\top}, \,\,\,\,\    b_1 = \left(\begin{array}{ccc}
		0, 0, \dots, 0
	\end{array}\right)^{\top}
 \end{flalign} 
for the first hidden layer,  
\begin{eqnarray}	\label{inner-weight}
W_m=
\left(\begin{array}{cccc}
	K_D\left(z_1\right) & {K}\left(z_1, z_2\right)\kappa \Delta z & \cdots & {K}\left(z_1, z_{N}\right)\kappa \Delta z \\
 {K}\left(z_2, z_1\right)\kappa \Delta z  & K_D\left(z_2\right) & \cdots & {K}\left(z_2, z_{N}\right)\kappa \Delta z \\
	\vdots & \vdots & \ddots & \vdots \\
	\vdots & \vdots & \vdots & \vdots \\
	{K}\left(z_{N}, z_1\right)\kappa \Delta z & {K}\left(z_{N}, z_2\right)\kappa \Delta z & \cdots & K_D\left(z_{N}\right) 
\end{array}\right),
\end{eqnarray}
and
\begin{eqnarray}
	b_m=\left(\begin{array}{ccc}
		\kappa g(z_1), \dots, \kappa g(z_{N})
	\end{array}\right)^{\top},
\end{eqnarray}
for hidden layers $m= 2, \dots, M-1$, where $K_D\left(z\right) := {K}\left(z, z\right)\kappa \Delta z + (1-\kappa)$, and:
\begin{flalign} \label{outer-weight}
	\begin{gathered}
		W_M=\big(\begin{array}{ccc}
			K(z_1, x)\kappa \Delta z, \dots, K(z_{i-1},x)\kappa \Delta z, K_D(x), K(z_{i+1}, x)\kappa \Delta z, \dots, K(z_{N}, x)\kappa \Delta z
		\end{array}\big)^{\top},
	\end{gathered}
\end{flalign}
$b_M =\big(\kappa g(x) \big)$, for the final layer, assuming $z_i = x$. 
\par Then, letting $\mathcal{A}_m$ be the map defined by $\mathcal{A}_m(x) = W_m x + b_m$, for $m=1,\dots, M$, the output of the neural network is given by:
\begin{equation}
    \hat{f}(x)
    = \Big(
        \mathcal{A}_M \circ \cdots \circ \mathcal{A}_1
      \Big)(x).
\end{equation}
\end{proposition}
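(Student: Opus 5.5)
This is a verification that composing the affine layers reproduces $M$ applications of the discretized KM operator $\mathcal{P}_N$ from \eqref{disc-op}, starting from $g$. The argument is an induction on the layer index, tracking the vector of hidden-layer values as the grid values of the successive iterates. Throughout, $f^{(m)}$ denotes the $m$-fold application of $\mathcal{P}_N$ to $g$, as in Definition \ref{dd}, with the convention $f^{(0)}=g$.

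\textbf{Step 1 (first layer).} Take the network input to be the constant $1$ (or read the first layer as acting on a unit input node). Then $W_1$ and $b_1=0$ give the hidden vector $h_1=(\kappa g(z_1),\dots,\kappa g(z_N))^\top$. This is not yet $f^{(1)}$ on the grid, so I will fix a bookkeeping convention. Each hidden node $j$ at layer $m$ carries the value $f^{(m-1)}(z_j)$ up to the missing bias contribution. Equivalently, and more cleanly, I interpret the first layer as producing $f^{(0)}=g$ on the grid and absorb the factor $\kappa$ together with the term $(1-\kappa)g$ consistently. This normalization is the one place where care is needed, since $W_1$ carries $\kappa g$ rather than $g$. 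My first attempt will be to check directly that $W_2h_1+b_2$ reproduces $\mathcal{P}_N g$ on the grid up to the stated constants. If it does not, I will treat the first layer as the identity initialization $f_0=g$ prescribed in Proposition \ref{km-method}.

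\textbf{Step 2 (inductive step on hidden layers).} Suppose $h_{m-1}=(f^{(m-1)}(z_1),\dots,f^{(m-1)}(z_N))^\top$. The $k$-th component of $W_mh_{m-1}+b_m$ is
\[
\kappa g(z_k)+\sum_{j\neq k}K(z_k,z_j)\kappa\Delta z\, f^{(m-1)}(z_j)+\big(K(z_k,z_k)\kappa\Delta z+(1-\kappa)\big)f^{(m-1)}(z_k).
\]
Evaluating the second line of \eqref{disc-op} at $x=z_k$, the indicator $\mathbbm{1}_{\{z_j=z_k\}}$ selects exactly the diagonal entry $K_D(z_k)$. The expression above is therefore $(\mathcal{P}_Nf^{(m-1)})(z_k)=f^{(m)}(z_k)$. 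Because the activation is linear, no nonlinearity interferes, and the induction closes for $m=2,\dots,M-1$.

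\textbf{Step 3 (output layer).} The final layer uses $W_M$ from \eqref{outer-weight} and $b_M=\kappa g(x)$, with $x=z_i\in Z$. The same computation gives
\[
\kappa g(x)+\sum_j f^{(M-1)}(z_j)\big(K(x,z_j)\kappa\Delta z+(1-\kappa)\mathbbm{1}_{\{z_j=x\}}\big)=(\mathcal{P}_Nf^{(M-1)})(x).
\]
By Definition \ref{dd} this equals the $M$-layer estimate $f^{(M)}(x)$. One check is needed here: \eqref{outer-weight} lists the entries as $K(z_j,x)$, whereas \eqref{disc-op} requires $K(x,z_j)$. I will note that the weight must be read as the row $x\mapsto K(x,z_j)$, consistent with \eqref{disc-op}, or else assume a symmetric kernel.

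Composing the affine maps $\mathcal{A}_m$ then yields $\hat f(x)=(\mathcal{P}_N^{(M)}g)(x)$. The main obstacle is the bookkeeping at the first and last layers: matching the index and layer-count conventions with the definition of $f^{(M)}$. The inner layers are a direct entrywise identity.
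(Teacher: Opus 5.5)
Your inductive step and your output-layer computation are the entrywise identity behind the paper's remark that \eqref{disc-op} ``matches'' the affine map between consecutive layers. The paper gives no further proof here and defers to its earlier work. Your observation that the entries of $W_M$ must be read as $K(x,z_j)$, or $K$ taken symmetric, is a correct catch.

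The genuine gap is the base case, which you leave unresolved and which is false under the convention you adopt. Your Step 2 has hypothesis $h_{m-1}=f^{(m-1)}|_Z$ and conclusion $h_m=f^{(m)}|_Z$. This contradicts your Step 1 phrase that layer $m$ carries $f^{(m-1)}$. With $f^{(0)}=g$, the induction therefore needs $h_1=(\mathcal{P}_N g)|_Z$, whereas the stated first layer gives $h_1=\kappa g|_Z$. No re-indexing rescues this when $\kappa\neq 1$. For $K\equiv 0$ the network produces $h_m=(1-(1-\kappa)^m)\,g|_Z$ and output $(1-(1-\kappa)^M)\,g(x)$, while the KM iteration started at $g$ never leaves $g$. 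Your fallback of making the first layer output $g$ alters $W_1$, so it proves a different statement.

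The missing observation is that the first hidden layer is itself one discrete KM step from the zero function. Putting $f\equiv 0$ in \eqref{disc-op} kills the sum, so $(\mathcal{P}_N 0)(z_k)=\kappa g(z_k)$, which is precisely $W_1\cdot 1+b_1$. Set $f^{(0)}\equiv 0$ and $f^{(m)}=\mathcal{P}_N f^{(m-1)}$. Then:
\begin{itemize}
\item the base case $h_1=f^{(1)}|_Z$ holds exactly;
\item your Step 2 propagates $h_m=f^{(m)}|_Z$ up to $m=M-1$;
\item your Step 3 yields $\hat f(x)=(\mathcal{P}_N^{(M)}0)(x)$.
\end{itemize}
For $\kappa=1$ this equals $(\mathcal{P}_N^{(M-1)}g)(x)$, the Picard iterate from $g$ with one step fewer. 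So the proposition is true with ``$M$-layer approximation'' meaning $M$ discrete KM steps from the zero initial guess, not literally $\mathcal{P}_N^{(M)}g$ as in Definition \ref{dd}. Since the (unique) fixed point does not depend on the initial guess, this is all the later convergence arguments need. Your write-up should state the identification this way instead of hedging.
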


Based on the constructions above we can formulate the Universal Approximation Theorem for Fredholm Neural Networks (Fredholm NNs) below (see \cite{georgiou2025fredholm} for the proof for the one dimensional case, and we note that the $d-$dimensional result follows easily by following the same steps).

\begin{theorem}[Fredholm NNs as universal approximators of linear FIE solutions \cite{georgiou2025fredholm}]\label{FUA}
Consider the linear integral operator, $\mathcal{T} : \mathcal{V} \rightarrow \mathcal{V}$ defined in \eqref{OPERATOR-nlinear}, where ${\cal V}$ is either ${\cal X}$ or ${\cal H}$.
Consider the space $\mathcal{V'} \subset \mathcal{V}$ defined by $\mathcal{Z'}: = \{ f \in \mathcal{V}: g+ \mathcal{T}f = f, \text{ for some } g, K \}$. Then, for any $f \in {\mathcal Z}'$ and for any given approximation error $\epsilon$, there exists a fully connected Fredholm NN (denoted by $\mathcal{F}(x;g, K)$) with $M$ hidden layers as described in Proposition \ref{DNN_construction}, and activation function given by:
\begin{eqnarray}
 \sigma(x) =
\begin{cases}
   \kappa g(x), \text{ for layer }  = 1, \\
    x, \text { for layer} \geq 2,   
\end{cases}
\end{eqnarray}
 such that $\|f(x) - \mathcal{F}(x;g, K)\|_{L^2(\mathcal{D})} \leq \epsilon$.
\end{theorem}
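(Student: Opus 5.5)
The natural route is a triangle inequality splitting the error into an iteration error and a discretization error. Given $f \in \mathcal{Z}'$, fix $g$ and $K$ with $f = g + \mathcal{T}f$, and assume, as in Proposition \ref{km-method} and the surrounding discussion, that $\mathcal{T}$ is non-expansive (or a contraction). Let $f^{(M)}$ be the exact (undiscretized) KM iterate \eqref{km-it} after $M$ steps with $f^{(0)}=g$. Let $\hat f^{(M)}_N = \mathcal{P}_N^{(M)} g$ be the $M$-layer fixed point estimate of Definition \ref{dd} on a grid $Z$ with $N$ points. Then
\begin{equation*}
\|f - \mathcal{F}(\cdot;g,K)\| \le \|f - f^{(M)}\| + \|f^{(M)} - \hat f^{(M)}_N\|,
\end{equation*}
and the claim reduces to making each term smaller than $\epsilon/2$. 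The identification $\mathcal{F}(x;g,K)=\hat f^{(M)}_N(x)$ is exactly the content of Proposition \ref{DNN_construction}. Writing out $\mathcal{A}_M\circ\cdots\circ\mathcal{A}_1$ with the given weights, biases and the stated activation reproduces the composition of \eqref{disc-op}. I would verify this by a one-line induction on the layer index: the $m$-th hidden layer's output vector equals $(\hat f^{(m)}_N(z_1),\dots,\hat f^{(m)}_N(z_N))$.

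\textbf{Step 1 (iteration error).} By Proposition \ref{km-method}, $f^{(M)}\to f$ in $\mathcal{H}$ when $\sum_n\kappa_n(1-\kappa_n)=\infty$. In the contraction case with $\kappa=1$, the Banach fixed point estimate gives $\|f-f^{(M)}\|\le q^M\|f-g\|$. Either way we can choose $M$ with $\|f-f^{(M)}\|<\epsilon/2$. With a constant $\kappa\in(0,1)$, as in Proposition \ref{DNN_construction}, the divergence condition holds automatically.

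\textbf{Step 2 (discretization error, the main obstacle).} With $M$ now fixed, I would show $\hat f^{(M)}_N\to f^{(M)}$ as $N\to\infty$ by induction on $m\le M$. Write $\mathcal{P}f = (1-\kappa)f+\kappa(g+\mathcal{T}f)$ for the exact KM map. Then
\begin{equation*}
\hat f^{(m+1)}_N - f^{(m+1)} = \big(\mathcal{P}_N \hat f^{(m)}_N - \mathcal{P}_N f^{(m)}\big) + \big(\mathcal{P}_N f^{(m)} - \mathcal{P} f^{(m)}\big).
\end{equation*}
The second term is the quadrature error of $\sum_j K(x,z_j)f^{(m)}(z_j)\Delta z$ against $\int K(x,z)f^{(m)}(z)\,dz$. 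For continuous $K$ and $g$ (so all iterates are continuous on the compact $\mathcal{D}$), this tends to zero uniformly in $x$ by convergence of Riemann sums, or of quasi-Monte Carlo sums via Koksma--Hlawka in $d$ dimensions. The first term is bounded by the discrete operator norm, which is at most $(1-\kappa)+\kappa\,\|K\|_\infty|\mathcal{D}|$ uniformly in $N$. The recursion therefore yields an error bounded by $C^M$ times the maximal one-step quadrature error, which vanishes as $N\to\infty$. Uniform convergence on $\mathcal{D}$ then implies $L^2(\mathcal{D})$ convergence since $\mathcal{D}$ is bounded. This handles both $\mathcal{V}=\mathcal{X}$ and $\mathcal{V}=\mathcal{H}$.

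The delicate point is this step when $K$ is merely in $L^2(\mathcal{D}\times\mathcal{D})$, since then point evaluations and quadrature are not well defined. I would first approximate $K$ in $L^2$ by a continuous kernel $\tilde K$ (and $g$ by a continuous $\tilde g$) so that $\|\mathcal{T}_K-\mathcal{T}_{\tilde K}\|\le\|K-\tilde K\|_{L^2}$ is small. The effect on $f^{(M)}$ is controlled by a finite telescoping sum over $M$ steps. The continuous argument above then applies to $\tilde K$, and choosing $N$ large makes the second term below $\epsilon/2$.
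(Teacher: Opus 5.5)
Your proposal is correct and follows essentially the same route as the paper, which defers this proof to \cite{georgiou2025fredholm} but uses the same decomposition in the last part of the proof of Theorem~\ref{theorem}. There the error is split into a fixed-point truncation error, controlled by convergence of the KM/Picard iterates, plus the discretization error of the $M$-th iterate, with the discretized iterate realized exactly by the network of Proposition~\ref{DNN_construction}. Your treatment of the discretization term is more careful than the paper's, which simply posits a kernel discretization error $\delta(N)\to 0$: you use one-step quadrature errors on continuous iterates, propagated with an $N$-independent bound for fixed $M$, together with continuous approximation of an $L^2$ kernel; the one tacit assumption you share with Proposition~\ref{km-method} is that the fixed point is unique when $\mathcal{T}$ is merely non-expansive.
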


Note that the proof of Theorem \ref{FUA} as given in \cite{georgiou2025fredholm} generalizes trivially for $x\in \mathbb{R}^d$, as required.

\subsection{Recurrent Fredholm Neural Networks}

 We now turn to the non-linear FIE of the form \eqref{nl-ie-def}. As briefly discussed in \cite{georgiou2025fredholm}, the approximation of the fixed point solution can again be modeled with a Fredholm NN, but requires an additional recurrent layer for the application of the function $G(\cdot)$. This is a recurrent neural network (RNN), and in particular an Elman network structure \cite{elman1991distributed}. Although in \cite{georgiou2025fredholm}, we considered an iterative process for the non-linear FIE case, here we focus on this RNN approach. This structure is shown in Fig. \ref{fig:DNN-grid}. 
 
 \par An important note is that we do not use the KM algorithm in this case, but rather the Picard iteration, i.e,:
 \begin{equation}\label{picard-nl}
     (\mathcal{P}_N f) (x) := g(x) + \sum_{j =1}^N  K(x,z_j)f(z_j) \Delta z.
 \end{equation} 
 The reason is that the implementation of the KM algorithm as a Fredholm NN requires the rearangement as shown in \eqref{disc-op}. However, applying the non-linearity $G(\cdot)$ to the summand means that this no longer holds, and therefore the KM algorithm cannot be represented as a Fredholm Neural Network connections as described in the linear case. The difference in construction is described in detail in the result below. 

 \begin{proposition}
[Recurrent Fredholm NN construction]\label{DNN_construction-rec} The $M$-layer approximation of the solution of the non-linear FIE \eqref{nl-ie-def} can be implemented as a RNN with input $x\in \mathbb{R}^d$, $M$ hidden layers, a linear activation function and a single output node, where the weights and biases are given by:
\begin{flalign}
    W_1 = \left(\begin{array}{ccc}
		 g(z_1), \dots,  g(z_{N})
	\end{array}\right)^{\top}, \,\,\,\,\    b_1 = \left(\begin{array}{ccc}
		0, 0, \dots, 0
	\end{array}\right)^{\top}
 \end{flalign} 
for the first hidden layer,  
\begin{eqnarray}	\label{inner-weight}
W_m=
\left(\begin{array}{cccc}
	K\left(z_1, z_1\right) & {K}\left(z_1, z_2\right) \Delta z & \cdots & {K}\left(z_1, z_{N}\right) \Delta z \\
 {K}\left(z_2, z_1\right) \Delta z  & K\left(z_2, z_2\right) & \cdots & {K}\left(z_2, z_{N}\right) \Delta z \\
	\vdots & \vdots & \ddots & \vdots \\
	\vdots & \vdots & \vdots & \vdots \\
	{K}\left(z_{N}, z_1\right) \Delta z & {K}\left(z_{N}, z_2\right) \Delta z & \cdots & K\left(z_{N}, z_N\right) 
\end{array}\right),
\end{eqnarray}
and
\begin{eqnarray}
	b_m=\left(\begin{array}{ccc}
		 g(z_1), \dots, g(z_{N})
	\end{array}\right)^{\top},
\end{eqnarray}
for hidden layers $m= 2, \dots, M-1$, and:
\begin{flalign} \label{outer-weight}
	\begin{gathered}
		W_M=\left(\begin{array}{ccc}
			K(z_1, x)\Delta z, \dots, K(z_{N}, x) \Delta z
		\end{array}\right)^{\top},
	\end{gathered}
\end{flalign}
$b_M =\big( g(x) \big)$, for the final layer.  
\par Then, considering the recurrent input non-linearity $G(\cdot):\mathbb{R}\rightarrow\mathbb{R}$ applied to each node and letting $\mathcal{A}_m$ be the map defined by $\mathcal{A}_m(x) = W_m G(x) + b_m$, for $m=1,\dots, M$, the output of the neural network is given by:
\begin{equation}
    \hat{f}(x)
    = \Big(
        \mathcal{A}_M \circ \cdots \circ \mathcal{A}_1
      \Big)(x).
\end{equation}
\end{proposition}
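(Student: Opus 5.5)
The plan is to prove the proposition by induction on the layer index $m$. The inductive claim is that the vector of hidden-state values after layer $m$ coincides, node by node, with the discretized Picard iterate \eqref{picard-nl} evaluated on the grid $Z$. Concretely, I will define $f^{(0)} := g$ and $f^{(m)} := \mathcal{P}_N f^{(m-1)}$, where $\mathcal{P}_N$ is the nonlinear Picard map
\begin{equation*}
(\mathcal{P}_N f)(x) = g(x) + \sum_{j=1}^N K(x,z_j)\,G(f(z_j))\,\Delta z .
\end{equation*}
This is \eqref{picard-nl} with the nonlinearity $G$ inserted, as dictated by \eqref{nl-ie-def}. The hypothesis to carry through the induction is that the hidden vector $h_m \in \mathbb{R}^N$ satisfies $(h_m)_i = f^{(m-1)}(z_i)$ for $i=1,\dots,N$.

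For the base case, the first layer outputs $W_1$ with zero bias, namely $(g(z_1),\dots,g(z_N))^\top = (f^{(0)}(z_i))_i$; this is the analogue of the activation convention of Theorem \ref{FUA}. For the inductive step, assume $(h_{m-1})_i = f^{(m-2)}(z_i)$. The recurrent unit applies $G$ componentwise, so the incoming vector is $(G(f^{(m-2)}(z_j)))_j$. Applying $W_m$ and adding $b_m$ then gives
\begin{equation*}
(h_m)_i = g(z_i) + \sum_{j=1}^N K(z_i,z_j)\Delta z\, G(f^{(m-2)}(z_j)) = f^{(m-1)}(z_i).
\end{equation*}
This is exactly $(\mathcal{P}_N f^{(m-2)})(z_i)$. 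In checking this, the diagonal entries of $W_m$ must be read as $K(z_i,z_i)\Delta z$. Unlike the KM case, there is no $(1-\kappa)$ identity correction here, because we use the pure Picard iteration with $\kappa_n=1$. For the final layer, $W_M$ collects $K(z_j,x)\Delta z$ and $b_M = g(x)$. After applying $G$ to $h_{M-1}$ this yields $g(x) + \sum_j K(x,z_j)G(f^{(M-2)}(z_j))\Delta z$, which is the $M$-layer estimate evaluated at an arbitrary $x$. Here I must reconcile the index order in $W_M$ with the kernel's argument order, using the transpose convention of the linear construction.

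The main point needing care is the claim that the nonlinear iteration cannot be absorbed into the linear-layer form of \eqref{disc-op}. The correct placement of $G$ is before the weight multiplication, that is, $\mathcal{A}_m(h) = W_m G(h) + b_m$, and not after it. I will verify this by noting that \eqref{nl-ie-def} applies $G$ to $f(z)$ inside the integral. I also need to confirm that this placement is consistent across all layers, including the first: layer one involves no $G$, since the sum begins only from layer two. Once this bookkeeping is fixed, the identification $\hat f(x) = f^{(M)}(x)$ follows directly from the induction, and no analytical estimates are required.
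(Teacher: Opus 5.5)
You have the right argument: the layer-by-layer induction that identifies node $i$ of hidden layer $m$ with a discretized Picard iterate at $z_i$. The paper gives no separate proof and intends exactly this direct read-off from \eqref{picard-nl} and Definition~\ref{dd}. You also correctly insert $G$ into \eqref{picard-nl} and read the diagonal of $W_m$ as $K(z_i,z_i)\Delta z$. Two steps, however, fail as written.

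\textbf{The output layer.} In the inductive step you use $(W_m h)_i=\sum_j (W_m)_{ij}h_j$, so the receiving node supplies the first argument of $K$. The output node $x$ therefore needs weights $K(x,z_j)\Delta z$, but the stated $W_M$ has entries $K(z_j,x)\Delta z$. No transpose convention repairs this:
\begin{itemize}
\item Reading $W_M$ as a column and contracting gives $g(x)+\sum_j K(z_j,x)\Delta z\,G(h_j)$, which has the wrong argument order.
\item Transposing every layer consistently makes the whole network run the Picard iteration for the transposed kernel $(x,z)\mapsto K(z,x)$.
\end{itemize}
Appealing to the linear construction does not help, since its $W_M$ has the same argument order. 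So the output identity you assert holds only for symmetric $K$. That is true of the ground-truth kernels in the paper's FIE examples, but it is neither assumed in the proposition nor guaranteed for a learned $K_\theta$. Otherwise you must correct $W_M$ to $(K(x,z_1)\Delta z,\dots,K(x,z_N)\Delta z)$, just as you corrected the diagonal.

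\textbf{The layer count.} Under your hypothesis $(h_m)_i=f^{(m-1)}(z_i)$, the last hidden layer $h_{M-1}$ holds $f^{(M-2)}$. Your own output formula $g(x)+\sum_j K(x,z_j)\Delta z\,G(f^{(M-2)}(z_j))$ therefore equals $f^{(M-1)}(x)=(\mathcal{P}_N^{(M-1)}g)(x)$. That is not the $M$-layer estimate $(\mathcal{P}_N^{(M)}g)(x)$ of Definition~\ref{dd}, so your closing claim $\hat f=f^{(M)}$ contradicts the computation just before it. The slip comes from the statement, which speaks of $M$ hidden layers plus an output node but lists only the maps $\mathcal{A}_1,\dots,\mathcal{A}_M$. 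Resolve it explicitly in one of two ways:
\begin{itemize}
\item conclude that the displayed network realizes the $(M-1)$-layer estimate; or
\item use $(W_m,b_m)$ for $m=2,\dots,M$ and put the output weights in an $(M+1)$-st map. This matches Figure~\ref{fig:DNN-grid}, whose hidden layers hold $g,f_1,f_2$ and whose output is $f_3$.
\end{itemize}

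Similarly, letting layer one output $(g(z_i))_i$ with no $G$ is the sensible reading. It contradicts the statement's $\mathcal{A}_1(x)=W_1G(x)+b_1$, so present it as a correction rather than something you ``confirm.'' Finally, the non-representability of the KM step, which you single out as the main point, is not asserted by the proposition and needs no proof here.
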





The corresponding Universal Approximation Theorem can be formulated as follows.

\begin{theorem}[Fredholm NNs as universal approximators of non-linear FIE solutions]\label{FUA}
Consider the contractive integral operator ${\cal T}: {\cal X} \to {\cal X}$, defined in \eqref{OPERATOR-nlinear}.
Consider the space $\mathcal{X'} \subset \mathcal{X}$ defined by $\mathcal{X'}: = \{ f \in \mathcal{X}: g+ \mathcal{T}f = f, \text{ for some } g, K, G \}$. Then, for any $f \in {\cal X}'$ and for any given approximation error $\epsilon$, there exists a fully connected DNN (denoted $\mathcal{F}(x;g, K, G)$) with $M$ hidden layers as described in Proposition \ref{DNN_construction-rec}, with recurrent input non-linearity function $G$, activation function given by:
\begin{eqnarray}
 \sigma(x) =
\begin{cases}
   \kappa g(x), \text{ for layer }  = 1, \\
    x, \text { for layer} \geq 2,   
\end{cases}
\end{eqnarray}
such that $\|f(x) - \mathcal{F}(x;g, K, G)\|_{L^2(\mathcal{D})} \leq \epsilon$.
\end{theorem}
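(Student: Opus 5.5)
The plan is the usual split into an iteration error and a quadrature error, controlled by the contraction property. Let $q<1$ be the contraction constant of $\mathcal{T}=\mathcal{T}_{K,G}$ on $\mathcal{X}$, and let $f\in\mathcal{X}'$ be the fixed point of $\Phi(u):=g+\mathcal{T}u$. Write $f^{(M)}=\Phi^{M}(g)$ for the exact Picard iterates and $\hat f^{(M)}=\mathcal{P}_N^{(M)}g$ for the discretized iterates built from \eqref{picard-nl}. By Proposition \ref{DNN_construction-rec}, $\hat f^{(M)}$ coincides with the output $\mathcal{F}(x;g,K,G)$ of the recurrent network. I will then use the splitting
\begin{equation*}
\|f-\mathcal{F}\|_{L^2(\mathcal{D})}\le |\mathcal{D}|^{1/2}\big(\|f-f^{(M)}\|_\infty+\|f^{(M)}-\hat f^{(M)}\|_\infty\big),
\end{equation*}
which is valid because $\mathcal{D}$ is bounded. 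It then suffices to make each term at most $\epsilon/(2|\mathcal{D}|^{1/2})$.

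\textbf{Step 1 (iteration error).} By the Banach fixed point theorem, $\|f-f^{(M)}\|_\infty\le \frac{q^M}{1-q}\|\Phi(g)-g\|_\infty = \frac{q^M}{1-q}\|\mathcal{T}g\|_\infty$. I choose $M$ large enough that this is below the target.

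\textbf{Step 2 (discretization error).} I fix this $M$ and define the local quadrature error $e_N(u):=\sup_x\big|\int_{\mathcal{D}}K(x,z)G(u(z))dz-\sum_j K(x,z_j)G(u(z_j))\Delta z\big|$. Adding and subtracting terms gives the recursion
\begin{equation*}
\|f^{(m)}-\hat f^{(m)}\|_\infty\le \|\mathcal{T}f^{(m-1)}-\mathcal{T}_N f^{(m-1)}\|_\infty+\|\mathcal{T}_N f^{(m-1)}-\mathcal{T}_N\hat f^{(m-1)}\|_\infty .
\end{equation*}
The first term is $e_N(f^{(m-1)})$. The second is bounded by $L_N\|f^{(m-1)}-\hat f^{(m-1)}\|_\infty$, where $L_N:=\mathrm{Lip}(G)\sup_x\sum_j|K(x,z_j)|\Delta z$. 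This is a Riemann sum converging to $\mathrm{Lip}(G)\sup_x\int|K(x,z)|dz$, so $L_N\le L$ uniformly for $N$ large. Unrolling the recursion gives $\|f^{(M)}-\hat f^{(M)}\|_\infty\le \sum_{m=0}^{M-1}L^{M-1-m}e_N(f^{(m)})$.

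\textbf{Main obstacle.} The hard part is showing that $e_N(f^{(m)})\to 0$ uniformly in $x$ for each of the finitely many iterates. Each $f^{(m)}$ is continuous, and $G$ is Lipschitz, so the integrand $(x,z)\mapsto K(x,z)G(f^{(m)}(z))$ is continuous on the compact set $\mathcal{D}\times\mathcal{D}$, hence uniformly continuous. Uniform continuity in $x$ lets Riemann-sum (or quasi-Monte Carlo, via Koksma--Hlawka on the Sobol grid) convergence hold uniformly in $x$. I will assume $K$ continuous here, as in the Remark on $\mathcal{X}$. Since $M$ is fixed, the geometric factor $\sum_m L^{M-1-m}$ is a constant, and choosing $N$ large closes the argument.

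I do not need $L<1$, because $M$ is fixed before $N$. If I did want the bound to be uniform in $M$, I would have to show that the discrete operator inherits contractivity for large $N$, which is where the difficulty would lie. I also read the first-layer activation ``$\kappa g$'' in the statement as simply loading $g(z_j)$, consistent with Proposition \ref{DNN_construction-rec}, since no KM relaxation is used in the nonlinear case.
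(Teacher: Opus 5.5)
Your argument is correct and uses the same overall split the paper relies on, but it propagates the quadrature error differently. The paper gives no dedicated proof of this statement. Its only argument of this kind is the last step \eqref{nl-bound-3} in the proof of the non-linear FREDINO theorem, which bounds the error by a Banach a-priori truncation term $\frac{\rho^M}{1-\rho}\|\cdot\|$ plus an accumulated discretization term $\delta\,\frac{1-\rho^M}{1-\rho}$.

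The paper accumulates per-step errors with the contraction constant $\rho$. This gives a bound uniform in $M$, but it implicitly needs one of two things, neither verified there: a quadrature bound $\delta$ that holds along the $N$-dependent discrete iterates, or contractivity of the discretized operator. You instead measure the quadrature error only at the exact iterates $f^{(m)}$. These are fixed continuous functions independent of $N$, so $e_N(f^{(m)})\to 0$ follows directly from uniform continuity on $\mathcal{D}\times\mathcal{D}$. The price is the $M$-dependent factor $\sum_{m=0}^{M-1}L^{M-1-m}$ with $L$ possibly $\ge 1$. That is harmless, because you fix $M$ before $N$ and the statement is purely existential.

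Two minor points. First, your assumption that $K$ is continuous is genuinely needed, since the network uses point values of $K$, and it matches the paper's Remark on $\mathcal{X}$. Second, Koksma--Hlawka requires bounded Hardy--Krause variation, which a merely continuous integrand need not have. Since you may choose the grid, a partition-based Riemann grid suffices. For Sobol points you would instead argue via equidistribution plus compactness of $\{K(x,\cdot)G(f^{(m)}(\cdot))\}_{x\in\mathcal{D}}$ in $\mathcal{C}(\mathcal{D})$.
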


\begin{figure}
\centering
\subfloat[]{ \begin{neuralnetwork}[height=1.5, layertitleheight=2.5cm, nodespacing=2.0cm, layerspacing=1.2cm]
        \newcommand{\x}[2]{$\kappa g(z_#2)$}
        \newcommand{\z}[2]{$x_#2$}
        \newcommand{\y}[2]{$f_3(x_#2)$}
        \newcommand{\hfirst}[2]{\small $f_1(z_#2)$}
        \newcommand{\hsecond}[2]{\small $f_2(z_#2)$}
        \inputlayer[count=4, bias = false, text = \z]
        \hiddenlayer[count=4, bias=false,  text=\x] \linklayers
        \hiddenlayer[count=4, bias=false,  text=\hfirst] \linklayers
        \hiddenlayer[count=4, bias=false,  text=\hsecond] \linklayers
        \outputlayer[count=4,  text=\y] \linklayers
    \end{neuralnetwork}} \,\,\,\,\,\,\,\,\,\,
    \subfloat[]{%
\begin{neuralnetwork}[height=1.5, layertitleheight=2.5cm,
                      nodespacing=2.0cm, layerspacing=1.3cm]
  \newcommand{\x}[2]{$ g(z_#2)$}
  \newcommand{\z}[2]{$x_#2$}
  \newcommand{\y}[2]{$f_3(x_#2)$}
  \newcommand{\hfirst}[2]{\small $f_1(z_#2)$}
  \newcommand{\hsecond}[2]{\small $f_2(z_#2)$}
  \inputlayer[count=4, bias=false, text=\z]
  \hiddenlayer[count=4, bias=false, text=\x]        \linklayers   
  \hiddenlayer[count=4, bias=false, text=\hfirst]   \linklayers   
  \hiddenlayer[count=4, bias=false, text=\hsecond]  \linklayers   
  \outputlayer[count=4, text=\y]                    \linklayers   

  \foreach \L in {1,2,3}{%
    \foreach \N in {1,...,4}{%
      \link[from layer=\L,from node=\N,%
            to layer=\L,to node=\N,%
            style={->,in=135,out=45,looseness=7}]%
    }%
  }%
\end{neuralnetwork}}

    \caption{Schematic of: (Left) The Feedfroward Neural Network (Right) The Recurrent Fredholm Neural Network. The recurrent layer applies the non-linearity $G(\cdot)$ pointwise to the output of each node. The grid points $z_i \in \mathbb{R}^d$ can be sampled either uniformly or using quasi-Monte Carlo methods such as Sobol sampling. }
\label{fig:DNN-grid}
\end{figure}

\section{FREDINOs: Fredholm Integral Neural Operators as universal approximators}\label{sec3}

In this section, we provide the main theoretical results for the proposed method. Recall that we consider the problem of learning the unknown, non-expansive, integral operators \eqref{OPERATOR-linear} or \eqref{OPERATOR-nlinear}. By using the Fredholm NN framework, we can tackle this problem by learning the unknown functions $K:\mathcal D \times D \rightarrow \mathbb{R}$ and $G:\mathbb{R} \rightarrow \mathbb{R}$, such that: (a) the integral equation is satisfied, (b) the learned integral operator, defined by the approximated kernel and non-linearity, is strictly non-expansive.

\par To accurately set-up the universal approximation theorems that motivate Fredholm Neural Operators (Frehdolm NOs), we first discuss the nature of the integral equations. Specifically, we highlight that there exist infinitely many kernels $K$ (and functions $G$ in the non-linear case) that can be used to define the same integral operator, in this setting. Indeed, considering first the linear case, let $\{g_i, f_i\}$ for $i = 1, \dots, N$ be the (finite) pairs that satisfy the IE defined by the integral operator with kernel $K$, i.e., 
\begin{equation}
    f_i(x) = g_i(x) + (\mathcal{T}_K f_i)(x),
\end{equation}
for all $i = 1,\dots, N$, where $\mathcal{T}_Kf_i(x) :=  \int_{\mathcal{D}}K(x,y)f_i(z)dz$. Then, if we define the map $\mathcal{A}: L^2(\mathcal{D} \times \mathcal{D}) \rightarrow L^2(\mathcal{D})$ with $\mathcal{A}( \Phi )= \left(\mathcal{T}_{\Phi}f_1, \dots, \mathcal{T}_{\Phi}f_N \right)$, the null space of the map reads: $\ker(\mathcal{A}) = \{\Phi: \mathcal{T}_{\Phi}f_i = 0, \text{for all }i = 1,\dots, N\}$. Trivially, $\ker(\mathcal{A})$ is non-empty, and hence, for any $\Phi \in \ker(\mathcal{A})$, we can define $\tilde{K}:= K + \Phi$ and we have:
\begin{equation}
 f_i(x) = g_i(x) + (\mathcal{T}_{\tilde{K}} f_i)(x),
\end{equation}
for all $i = 1, \dots, N$. 
\par Non-uniqueness holds also in the non-linear case, where we want to learn both $K$ (one of the infinite kernels that satisfy the integral equation) and $G$ to define the integral operator. One can follow a similar argument as above. 
\par These observations motivate the formulation of the universal approximation theorems that follow in terms of the family of kernels $\mathcal{K}$ (and non-linearities $\mathcal{G}$), satisfying the IEs. We begin with the linear case.

\begin{theorem}[FREDINOs as universal approximators of linear integral operators]\label{theorem} Consider the Hilbert space ${\mathcal H}=L^{2}(\mathcal{D} ; \mathbb{R}^d)$ and let the integral operator ${\cal T}_{K} : {\cal H} \to {\cal H},$ be defined by $\mathcal{T}_{K}u = \int_{\mathcal{D}}K(x,z)u(z)dz$, for any $u \in \mathcal{H}$, with kernel $K:\mathcal{D}\times \mathcal{D} \rightarrow \mathbb{R}$. Furthermore, let $\mathcal{S}_K$ represent the solution operator of the corresponding FIE $f(x) = g(x) + (\mathcal{T}_{K}f)(x)$, for any $g \in {\cal H}$, i.e., $(\mathcal{S}_{K})g = f$. Then: 
\begin{itemize}
    \item[1.] There exists a surrogate model (a neural network) for the kernel, $K_{ \theta}: \mathcal{D}\times \mathcal{D} \rightarrow \mathbb{R}$, for some parameters $\theta \in \Theta$, where $\Theta \subset {\mathbb R}^{P}, P \in \mathbb{N}$ such that: 
    \begin{equation}
        \| \mathcal{T} - \mathcal{T}_{K_{\theta}} \| < \epsilon_1,
    \end{equation}
    for any $\epsilon_1>0$, where $\mathcal{T}_{K_{\theta}}$ is the learned operator, defined by:
    \begin{equation}
    (\mathcal{T}_{K_{\theta}}u)(x) := \int_{\mathcal{D}} K_{\theta}(x,z) u(z) dz.
\end{equation}
Similarly, for the learned solution operator $S_{K_{\theta}}g$, we have for any $\epsilon_2>0$:
\begin{equation}
   \|S_K\,g - S_{K_{\theta}} \, g\|_{L^2(\mathcal{D})}   < \epsilon_2.
   \label{eq:SolOperCOnv}
\end{equation}
    \item[3.] If $ \mathcal{T}$ is a contraction, with constant $\rho <1$, then the learned operator $\mathcal{T}_{\theta}$ is also contractive, i.e., for any $u,v\in \cal H$:

\begin{equation}
    \| \mathcal{T}_{K_{\theta}} u  - \mathcal{T}_{K_{\theta}} v \|_{L^2(\cal D )} \leq \rho_{\theta} \|u-v \|_{L^2(\cal D )},
\end{equation}

for $\rho_{\theta} < 1$. 
\item[4.] There exists a Fredholm Neural Network $\mathcal{F}(\cdot; \cdot, K_{\theta})$, with $M$ hidden layers, such that for any $\epsilon_3 >0$ and all $g \in \mathcal{H}$:
\begin{eqnarray}
    \norm{\mathcal{S}_Kg - \mathcal{F}(\cdot;g, K_{\bf \theta})}_{L^2(\cal D )} < \epsilon_3.
\end{eqnarray}
\end{itemize}
\end{theorem}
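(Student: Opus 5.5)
The plan is to reduce every item to a single $L^2$ approximation of the kernel, using the bound $\|\mathcal{T}_K\|_{L^2\to L^2}\le \|K\|_{L^2(\mathcal D\times\mathcal D)}$ established in Section \ref{sec2}.

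\textbf{Step 1: kernel approximation.} Since $K\in\mathcal Z=L^2(\mathcal D\times\mathcal D)$ and $\mathcal D$ is compact, continuous functions are dense in $\mathcal Z$. The classical universal approximation theorem for feedforward networks (uniform approximation on compacta) then gives a network $K_\theta$ with $\|K-K_\theta\|_{L^2(\mathcal D\times\mathcal D)}<\delta$ for any prescribed $\delta>0$: first approximate $K$ by a continuous $\tilde K$, then approximate $\tilde K$ uniformly, which also controls the error in $L^2$ because $|\mathcal D|^2<\infty$. By linearity of $K\mapsto\mathcal T_K$,
\[
\|\mathcal T_K-\mathcal T_{K_\theta}\|\le \|K-K_\theta\|_{L^2(\mathcal D\times\mathcal D)}<\delta ,
\]
and choosing $\delta\le\epsilon_1$ gives the first claim.

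\textbf{Step 2: contractivity (item 3).} By linearity, $\|\mathcal T_{K_\theta}u-\mathcal T_{K_\theta}v\|\le \|\mathcal T_{K_\theta}\|\,\|u-v\|$, and
\[
\|\mathcal T_{K_\theta}\|\le \|\mathcal T_K\|+\|\mathcal T_K-\mathcal T_{K_\theta}\|<\rho+\delta .
\]
Choosing $\delta<(1-\rho)/2$ yields $\rho_\theta:=\rho+\delta<1$. From here on we work in the contractive regime, which is also what makes the solution operators well defined.

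\textbf{Step 3: solution operator.} For contractions, $\mathcal S_K=(I-\mathcal T_K)^{-1}$ is given by the Neumann series, with $\|\mathcal S_K\|\le 1/(1-\rho)$ and likewise $\|\mathcal S_{K_\theta}\|\le 1/(1-\rho_\theta)$. The resolvent identity
\[
\mathcal S_K-\mathcal S_{K_\theta}=\mathcal S_{K_\theta}(\mathcal T_K-\mathcal T_{K_\theta})\mathcal S_K
\]
gives
\[
\|\mathcal S_Kg-\mathcal S_{K_\theta}g\|\le \frac{\delta\,\|g\|}{(1-\rho)(1-\rho_\theta)} .
\]
For a given $g$ (or uniformly over a bounded set of $g$'s), taking $\delta$ small yields \eqref{eq:SolOperCOnv}. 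The claim should be read with $\epsilon_2$ depending on $\|g\|$, since for a literally uniform statement over all of $\mathcal H$ the bound scales with $\|g\|$.

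\textbf{Step 4: the Fredholm NN (item 4).} I will use the triangle inequality
\[
\|\mathcal S_Kg-\mathcal F(\cdot;g,K_\theta)\|\le \|\mathcal S_Kg-\mathcal S_{K_\theta}g\|+\|\mathcal S_{K_\theta}g-\mathcal F(\cdot;g,K_\theta)\| .
\]
The first term is below $\epsilon_3/2$ by Step 3. For the second, $K_\theta$ is continuous (a neural network) and $\mathcal T_{K_\theta}$ is a contraction by Step 2, so Theorem \ref{FUA} (linear case, Proposition \ref{DNN_construction}) applies to the FIE with kernel $K_\theta$. It gives $N$ and $M$ such that the $M$-layer discretized fixed point estimate is within $\epsilon_3/2$ of $\mathcal S_{K_\theta}g$. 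Concretely, this splits into a Picard truncation error of order $\rho_\theta^M\|g\|/(1-\rho_\theta)$ and a quadrature error that vanishes as $N\to\infty$ by uniform continuity of $K_\theta$ on the compact set $\mathcal D\times\mathcal D$.

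\textbf{Main obstacle.} Step 4 is where I expect the difficulty. The quadrature error requires enough regularity of $g$ for point evaluations, so for general $g\in\mathcal H$ I would first approximate $g$ by a continuous $\tilde g$ and use the boundedness of $\mathcal S_{K_\theta}$ and of the network map in $g$. The resulting constants depend on $\|g\|$, so uniformity is obtained on bounded sets of $g$.
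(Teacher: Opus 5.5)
Your Steps 1, 2 and 4 follow the paper's proof essentially step for step. These are: density of networks in $L^2(\mathcal{D}\times\mathcal{D})$ together with $\|\mathcal{T}_K\|\le\|K\|_{L^2(\mathcal{D}\times\mathcal{D})}$; the triangle inequality $\|\mathcal{T}_{K_\theta}\|\le\rho+\delta$; and the split of the network error into Picard truncation plus quadrature. Using the Hilbert--Schmidt bound directly for the operator norm is all that is needed; the paper's appeal to Banach--Steinhaus is superfluous. In item 4 your resolvent-identity bound replaces the paper's telescoping estimate $\sum_i\|\mathcal{T}_K^i-\mathcal{T}_{K_\theta}^i\|\le\delta\sum_i i\rho^{i-1}$ and gives the same order.

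\textbf{The gap is in Step 3.} The solution-operator claim in item 1 is not made under the contraction hypothesis of item 3. It only presupposes that $\mathcal{S}_K=(I-\mathcal{T}_K)^{-1}$ exists. Your remark that contractivity ``is what makes the solution operators well defined'' is incorrect: $\mathcal{T}_K$ is compact, so $\mathcal{S}_K$ exists as soon as $1$ is not an eigenvalue (e.g.\ $K\equiv 2$ on $[0,1]^2$, where $\|\mathcal{T}_K\|=2$). In that setting your bounds $1/(1-\rho)$ and $1/(1-\rho_\theta)$ are unavailable, and you have not even shown that $I-\mathcal{T}_{K_\theta}$ is invertible. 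The missing step is the perturbation argument the paper uses. For $\delta<1/\|\mathcal{S}_K\|$ set $C:=\mathcal{S}_K(\mathcal{T}_{K_\theta}-\mathcal{T}_K)$, so that $\|C\|<1$ and $I-\mathcal{T}_{K_\theta}=(I-\mathcal{T}_K)(I-C)$. The Neumann series for $(I-C)^{-1}$ gives $\|\mathcal{S}_{K_\theta}\|\le\|\mathcal{S}_K\|/(1-\|\mathcal{S}_K\|\delta)$. Your own resolvent identity then yields $\|\mathcal{S}_K-\mathcal{S}_{K_\theta}\|\le\|\mathcal{S}_K\|^2\delta/(1-\|\mathcal{S}_K\|\delta)\to 0$.

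\textbf{Your closing remark also overclaims.} The network reads $g$ only through point values, so $g\mapsto\mathcal{F}(\cdot;g,K_\theta)$ is not bounded (not even well defined) on $L^2(\mathcal{D})$. Approximating $g$ by a continuous $\tilde g$ helps only if the network is fed $\tilde g$, with $\|\mathcal{S}_K(g-\tilde g)\|\le\|\mathcal{S}_K\|\,\|g-\tilde g\|$ absorbing the difference.

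More seriously, uniformity on bounded sets of $g$ fails even in the sup norm. Multiply a fixed continuous $g_\star$ by a cutoff $0\le\chi\le 1$ that vanishes at the nodes and equals $1$ off tiny neighbourhoods of them, and call the result $g$. All hidden activations are then zero, so the network with $\kappa=1$ returns $g$ itself. Meanwhile $\mathcal{S}_Kg-g=\mathcal{T}_K\mathcal{S}_Kg\to\mathcal{T}_K\mathcal{S}_Kg_\star$ in $L^2$ as the neighbourhoods shrink, and this limit is generally nonzero. So $N$ must depend on $g$ itself (through its modulus of continuity), and uniformity should be claimed only for equicontinuous families, e.g.\ sets relatively compact in $\mathcal{C}(\mathcal{D})$.

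The paper hides the same issue by positing an operator-norm quadrature error $\delta(N)\to 0$. This does not put you behind the paper's argument, but the uniformity claim should be dropped.
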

\begin{proof}

We prove each of the above statements. 

{\bf 1.}  Recall that, given data for the functions $u$ and $v$, there is not a unique integral kernel $K$ such that $v = T_{K} u$. The choice of a unique kernel satisfying the equation will be achieved using regularization.  If no regularization is employed we can still provide an existence result, where it suffices to construct a kernel that approximates \emph{any of the kernels} satisfying the IE defined by the integral operator. 
\par We assume $K \in  L^2(\mathcal{D} \times \mathcal{D})$,
and recall the class of integral operators:
\begin{equation}
{\mathbb T}:=\{{\cal T}_{K}: {\cal H} \to {\cal H}: \,\, {\cal T}_{K} u(x)=\int_{{\cal D}} K(x,z) u(z) dz \,\,\, \text{for some } K : {\cal D} \times {\cal D} \to {\mathbb R}, \,\, K \in L^2(\mathcal{D} \times \mathcal{D})  \},
\end{equation}
We now consider the set of neural networks:
\begin{equation}
{\cal N} :=\{ N(\cdot , \cdot ; \theta) \,\, : \,\, N(\cdot, \cdot ; \theta) : {\cal D} \times {\cal D} \to {\mathbb R} ,\,\,\, \theta \in \Theta \subset {\mathbb R}^{P}, \,\, P \in {\mathbb N}\},
\end{equation}
which consists of neural networks with inputs $(x,z) \in {\cal D} \times {\cal D}$ parameterized by the weights $\theta \in \Theta$. The structure of the networks is such that $N(\cdot, \cdot ; \theta) \in {\mathcal Z}$ for every $\theta \in \Theta$. Finally, we consider the set of corresponding operators:
\begin{equation}
{\mathbb T}_{\Theta}:= \{ {\cal L}_{\theta} := {\cal T}_{N(\cdot, \cdot ;\theta)} \,\, : \,\, \theta \in \Theta \subset {\mathbb R}^{P}, \,\, P \in {\mathbb N} \,\}.
\end{equation}
Clearly, ${\mathbb T}_{\Theta} \subset {\mathbb T}$. We will show that ${\mathbb T}_{\Theta}$ is dense in ${\mathbb T}$ in the strong operator topology.

We recall the density result of the set of neural networks ${\cal N}$ in the space $L^2(\mathcal{D} \times \mathcal{D})$. This implies that for any $K \in  L^2(\mathcal{D} \times \mathcal{D})$ and any $\epsilon_1 >0$, there exists a neural network $N( \cdot, \cdot ;\theta_{\epsilon}) \in {\cal N}$, such that $K_{\epsilon}(\cdot, \cdot) := N(\cdot, \cdot ; \theta_{\epsilon})$ is an approximation of the kernel function $K$ in $Z$ in the sense that:
\begin{equation}
\| K_{{\theta}_{\epsilon}} - K \|_{L^2(\mathcal{D} \times \mathcal{D})} < \epsilon_1.
\label{eq:Kapprox}
\end{equation}
Based on the above, we obtain the approximated integral operator:
\begin{equation}
    {\cal L}_{\theta_{\epsilon}}:={\cal T}_{K_{\theta_{\epsilon}}} \in {\mathbb T}_{\Theta} \quad u \mapsto  {\cal L}_{\theta_{\epsilon}} u:= {\cal T}_{K_{\theta_{\epsilon}}} u, \quad \mbox{with} \quad {\cal L}_{\theta_{\epsilon}}u(x)= {\cal T}_{K_{\theta_{\epsilon}}} u(x)=\int_{{\cal D}} K_{\theta_{\epsilon}}(x,z) u(z) dz.
\end{equation}

The operator ${\cal T}_{\theta_{\epsilon}} \in {\mathbb T}_{\Theta}$  in turn approximates the operator ${\cal T}_{K} \in {\mathbb T}$. This follows by the Banach-Steinhaus theorem. Indeed,
for any $u \in X$ it holds that:
\begin{equation}\label{AAA}
\| {\cal L}_{\theta_{\epsilon}} u - {\cal T}_{K} u \|_{L^2({\cal D})} \le \| K_{\theta_{\epsilon}} - K \|_{L^2({\cal D} \times {\cal D})} \| u \|_{L^2({\cal D})} \le \epsilon 
'\| u \|_{L^2({\cal D})}.
\end{equation}
This implies that the family of operators $\{ {\cal L}_{\theta_{\epsilon}} \}_{\epsilon >0}$ is pointwise bounded. 

Consider a sequence $(\epsilon_{n} )_{n \in {\mathbb N}}$ such that $\epsilon_n \to 0$, and set $\theta_{n}=\theta_{\epsilon_n}$ for each $n \in {\mathbb N}$. Define the corresponding sequence of neural networks $(N(\cdot, \cdot ; \theta_{n}))_{n \in {\mathbb N}}$ generating the sequence of functions $(K_{n})_{n \in{\mathbb N}} \subset \mathcal Z$ (by setting $K_{n}(\cdot, \cdot) :=N(\cdot, \cdot ; \theta_{n})$), It holds that $K_{n} \to K$ in $L^2(\mathcal{D} \times \mathcal{D})$. Moreover, define the sequence of operators ${\cal L}_{n}={\cal T}_{K_{n}}$. By \eqref{AAA} we have that $\| {\cal L}_{n}u - {\cal T}_{K} \|_{L^2({\cal D})} \to 0$, for $n\rightarrow \infty$. The sequence $({\cal L}_{n}u)_{n \in {\mathbb N}}$ is bounded for all $u \in L^2({\cal D})$ i.e., $({\cal L}_{n})_{n \in {\mathbb N}}$ is pointwise bounded. By the Banach-Steinhaus theorem the sequence $({\cal L}_{n})_{n \in {\mathbb N}}$ is also uniformly bounded, hence it converges in the operator norm topology to ${\cal T}_{K}$ (i.e., $\| {\cal L}_{n} - {\cal T}_{K} \|_{{\cal L}(X, X)} \to 0$ as $n \to \infty$).

\par We can now prove the convergence of the solution operator (\ref{eq:SolOperCOnv}).  
Let $K_{\theta}$ be such that $\mathcal{T}_{K_{\theta}}$ approximates $\mathcal{T}_K$. Furthermore, we have assumed that the true solution operator $\mathcal{S}_K$ exists, i.e., $I-\mathcal{T}_K$ is invertible. Therefore, by the result above, we can choose ${\epsilon}_1$ such that: 
\begin{equation}\label{star}
    \| (I-\mathcal{T}_K) - (I - \mathcal{T}_{K_{\theta}}) \| = \| \mathcal{T}_K - \mathcal{T}_{K_{\theta}} \| < {\epsilon}_1 < \frac{1}{\|(I - \mathcal{T}_K)^{-1} \| } 
\end{equation}

We write $I - \mathcal{T}_{K_{\theta}}=(I - \mathcal{T}_K) - (\mathcal{T}_{K_{\theta}} - \mathcal{T}_K)$. Then, by we factorize to obtain:
\begin{equation}
I - \mathcal{T}_{K_{\theta}}=
(I - \mathcal{T}_K)(I - C),
\end{equation}
where we define the operator $C := (I -\mathcal{T}_K)^{-1}(\mathcal{T}_{K_{\theta}} - \mathcal{T}_K).$ By \eqref{star}, we have:
\begin{equation}
\|C\|
\le
\|(I - \mathcal{T}_K)^{-1}\| \, \|\mathcal{T}_{K_{\theta}} - \mathcal{T}_K\| < 1.
\end{equation}
 Hence, $C$ is a contraction and therefore $I - C$ is invertible by the Neumann series, $(I - C)^{-1} = \sum_{n=0}^{\infty} C^n$, with:
\begin{equation}
\|(I - C)^{-1}\|
\le
\frac{1}{1 - \|C\|}.
\end{equation}

From the factorization we deduce
\begin{equation}
(I - \mathcal{T}_{K_{\theta}})^{-1}
=
(I - C)^{-1}(I - \mathcal{T}_K)^{-1}.
\end{equation}

Hence
\begin{equation}\label{ineq1}
\|(I - \mathcal{T}_{K_{\theta}})^{-1}\|
\le
\frac{\|(I - \mathcal{T}_K)^{-1}\|}
{1 - \|(I - \mathcal{T}_K)^{-1}\| \, \|\mathcal{T}_{K_{\theta}} - T_K\|}.
\end{equation}

We now compute the difference of the inverses. Using the resolvent identity, we obtain:
\begin{equation}
S_K - S_{K_{\theta}} = (I - \mathcal{T}_K)^{-1} - (I - \mathcal{T}_{K_{\theta}})^{-1}
=
(I - \mathcal{T}_K)^{-1}
(\mathcal{T}_K - \mathcal{T}_{K_{\theta}})
(I - \mathcal{T}_{K_{\theta}})^{-1}.
\end{equation}

Taking norms, we obtain
\begin{equation}
\|S_K - S_{K_{\theta}}\|
\le
\|(I - \mathcal{T}_K)^{-1}\|
\, \|\mathcal{T}_K - \mathcal{T}_{K_{\theta}}\|
\, \|(I - \mathcal{T}_{K_{\theta}})^{-1}\|,
\end{equation}
and substituting  \eqref{ineq1} we get
\begin{equation}
\|S_K - S_{K_{\theta}}\|
\le
\frac{ \|(I - \mathcal{T}_K)^{-1}\|^2}{1 - \|(I - \mathcal{T}_K)^{-1}\| \, \|\mathcal{T}_K - \mathcal{T}_{K_{\theta}}\|}
\, \|\mathcal{T}_K - \mathcal{T}_{K_{\theta}}\| \leq \frac{\epsilon_1}{1 - \epsilon_1 \|(I-\mathcal{T}_K)^{-1}\|} =: \epsilon_2.
\end{equation}
As $\epsilon_1 \rightarrow 0$, this bound approaches $\epsilon_2 \rightarrow \epsilon_1$, and the result (\ref{eq:SolOperCOnv}) follows.

{\bf 2.} We can now prove  that  the learned operator $\mathcal{T}_{K_{\theta}}$  is a contraction in the case where the corresponding integral operator is a contraction. For linear operators, this is equivalent to $\| \mathcal{T}_{K_{\theta}}\|_{L^2({\cal D})} := \rho_{\theta} <1$. By assumption, we already have $\| T_K\|_{L^2({\cal D})} = \rho < 1$. Then, for any $u \in \mathcal{X}$: 
\begin{flalign}
    \| \mathcal{T}_{\theta}u \|_{L^2({\cal D})} = \| \mathcal{T}_{K_{\theta}}u - \mathcal{T}_K u + \mathcal{T}_K u \|_{L^2({\cal D})} \leq \mathcal{T}_{K_{\theta_{\epsilon}}} u - \mathcal{T}_{K}u\|_{L^2({\cal D})} + \| \mathcal{T}_K u \|_{L^2({\cal D})} < (\epsilon + \rho)\|u\|_{L^2({\cal D})}.
\end{flalign}
We can select $\epsilon < 1-\rho$, so that we obtain $\| \mathcal{T}_{K_{\theta}}\|_{L^2({\cal D})}\|u\|_{L^2({\cal D})} < \|u\|_{L^2({\cal D})}$, i.e., $\|\mathcal{T}_{K_{\theta}}\|_{L^2({\cal D})} := \rho_{\theta} < 1$, as required.

\par {\bf  3.}  It now remains to show that the solution operator $\mathcal{S}_K$ can be approximated by the learned integral operator, and, by extension, the Fredholm Neural Network of a given depth $M$. 
Because the true operator is contractive,
the inverse $(I-\mathcal{T})^{-1}$ exists and we can use the Neumann series to represent it, which in turn gives us the solution operator as:
\begin{equation}
  \mathcal{S}_Kg=(I-\mathcal{T}_K)^{-1}g= \sum_{i=0}^\infty \mathcal{T}_K^{(i)} g.
\end{equation}
This converges absolutely in $L^2$, in the sense that the numerical series $\sum_{i=0}^\infty \| \mathcal{T}_K^{(i)} g \|_{L^2({\cal D})}$ converges.
 For an approximation $K_{\theta}$, we have
\begin{equation}
  \mathcal{S}_{K_{\theta}}g=(I-\mathcal{T}_{K_{\theta}})^{-1}g= \sum_{i=0}^\infty \mathcal{T}_{K_{\theta}}^{(i)} g,
\end{equation}

Now, we can consider the error in the Fredholm Neural Network with $M$ hidden layers and defined on a grid with $N$ nodes, denoted $\mathcal{F}(\cdot; g, K_{\theta})$, which approximates the solution operator:
\begin{eqnarray}
    \norm{\mathcal{S}_Kg - \mathcal{F}(\cdot;g, K_{\bf \theta})}_{L^2(\cal D )} =\norm{\mathcal{S}_K g - \mathcal{S}_{K_{\theta}} g + \mathcal{S}_{K_{\theta}} g-\mathcal{F}(\cdot;g, K_{\bf \theta})}_{L^2(\cal D )}\leq \notag \\
    \norm{\mathcal{S}_K g - \mathcal{S}_{K_{\theta}} g}_{L^2(\cal D )} +\norm{\mathcal{S}_{K_{\theta}} g-\mathcal{F}(\cdot;g, K_{\bf \theta})}_{L^2(\cal D )}
\end{eqnarray}
For the second term, we have from Theorem \ref{FUA} that there exists a Fredholm NNs such that this bound is as small as we want, as the number of layers $M$ increases. For the first term, considering \eqref{eq:supremuminequality}, we have:
\begin{eqnarray}
 \norm{\mathcal{S}_K g - \mathcal{S}_{K_{\theta}} g}_{L^2(\mathcal{D})} 
 \leq \sum_{i=0}^{\infty} \| \big(\mathcal{T}^{(i)}_K-\mathcal{T}^{(i)}_{K_{\theta}}\big) g\|_{L^2(\mathcal{D})} \leq \|g\|_{L^2(\mathcal{D})} \sum_{i=0}^{\infty} \| K^i-{K^{i}_{\theta}}\|_{L^2(\mathcal{D} \times \mathcal{D})}
 \label{eq:normsoperatordiff}
\end{eqnarray}

Then, using  the property for the difference of powers:
\begin{eqnarray}
    K^i-K^{i}_{\theta} = \sum_{j=0}^{i-1} K^{i-1-j} (K-K_{\theta}) K^{j}_{\theta},
\end{eqnarray}
we get (omitting for simplicity the underscores for the norms):
\begin{eqnarray}\label{usefull-identity}
    \| K^i-K^{i}_{\theta}\| \leq \sum_{j=0}^{i-1} \| K^{i-1-j}\| \| (K-K_{\theta}) \| \|K^{j}_{\theta}\|,
\end{eqnarray}
and considering that $\|K\|, \|K_{\theta}\| < \rho_m := \min \{\rho, \rho_{\theta}\} <1$, we get:
\begin{eqnarray}\label{usefull-identity2}
    \| K^i-K^{i}_{\theta}\|_{L^2(\mathcal{D} \times \mathcal{D})} \leq \epsilon_1 \sum_{j=0}^{i-1} {\rho}_m^{i-1}=\epsilon_1 i \rho_m^{i-1},
\end{eqnarray}
Plugging the above in (\ref{eq:normsoperatordiff}), we finally get:
\begin{equation}
    \norm{\mathcal{S}_K g - \mathcal{S}_{K_{\theta}} g}_{L^2(\cal D )} 
  \leq \|g\|_{L^2(\cal D )} \sum_{i=0}^{\infty} \| K^i-K^{i}_{\theta}\|_{L^2(\mathcal{D} \times \mathcal{D})} \leq \epsilon_1  \|g\| \sum_{i=0}^{\infty} i\rho_m^{i-1}=\frac{\epsilon_1  \|g\|_{L^2(\mathcal{D})}}{(1-\rho_m)^2}.
\end{equation}
Hence, we obtain:
\begin{equation}
\norm{\mathcal{S}_Kg - \mathcal{F}(\cdot;g, K_{\bf \theta})}_{L^2(\cal D )} \leq \frac{\epsilon_1 \|g\|_{L^2(\mathcal{D})}}{(1-\rho_m)^2}+\norm{\mathcal{S}_{K_{\theta}} g-\mathcal{F}(\cdot;g, K_{\bf \theta})}_{L^2(\mathcal{D})}
\end{equation}

The error in the Fredholm NN approximation arises from two sources: the use of $M$ hidden layers in the network and the $N$ nodes corresponding to the integral discretization. Hence, we can write:
\begin{equation}
    \mathcal{S}_{K_{\theta}} g- \mathcal{F}(\cdot;g, K_{\bf \theta})=\mathcal{S}_{K_{\theta}} g- \sum_{i=0}^M \mathcal{T}_{K_{\theta}}^{(i)} g + \sum_{i=0}^M \mathcal{T}_{K_{\theta}}^{(i)} g  -\mathcal{F}(\cdot;g, K_{\bf \theta}),
    \end{equation}
    and taking the norm, we have 
    \begin{equation}
        \norm{\mathcal{S}_{K_{\theta}} g-\mathcal{F}(\cdot;g, K_{\bf \theta})}_{L^2(\mathcal{D})} \leq \norm{ \mathcal{S}_{K_{\theta}} -\sum_{i=0}^M \mathcal{T}_{K_{\theta}}^{(i)} g}_{L^2(\mathcal{D})} +\norm{\sum_{i=0}^M \mathcal{T}_{K_{\theta}}^{(i)} g-\mathcal{F}(\cdot;g, K_{\bf \theta})}_{L^2(\mathcal{D})}.
    \end{equation}
    
For the first term, recall that $\|K_{\theta}\|<\rho_m<1$, and we have
\begin{flalign}
    \norm{\mathcal{S}_{K_{\theta}} g-\sum_{i=0}^M \mathcal{T}_{K_{\theta}}^{(i)} g}_{L^2(\mathcal{D})} =\norm{\sum_{i=0}^\infty \mathcal{T}^{(i)}_{K_{\theta}} g-\sum_{i=0}^M \mathcal{T}_{K_{\theta}}^{(i)} g}_{L^2(\mathcal{D})} = \norm{\sum_{i=M+1}^\infty \mathcal{T}^{(i)}_{K_{\theta}} g}_{L^2(\mathcal{D})} \notag \\ \leq \sum_{i=M+1}^\infty \|K_{\theta}\|_{L^2(\mathcal{D} \times \mathcal{D})}^{i} \|g\|_{L^2(\mathcal{D})} =\frac{\rho_m^{M+1}}{1-\rho_m}\|g\|_{L^2(\mathcal{D})}.
    \label{eq:boundfirst}
\end{flalign}
The second term captures the error induced by the discretization. For clarity, below let $K_{{\theta}, N}$ represent the discretized kernel operator used in the Fredholm Neural Network. Then, we obtain:
    \begin{equation}
        \norm{\sum_{i=0}^M \mathcal{T}_{K_{\theta}}^{(i)} g-\mathcal{F}(\cdot;g, K_{\bf \theta})}_{L^2(\mathcal{D})} =\norm{\sum_{i=0}^M \big(K^i_{\theta}-K^i_{\theta,N}\big) g}_{L^2(\mathcal{D})} \leq \|g\|_{L^2(\mathcal{D})} \sum_{i=0}^M \norm{K^i_{\theta}-K^i_{\theta,N}}_{L^2(\mathcal{D} \times \mathcal{D})}.
    \end{equation}
Now assuming that:
\begin{equation}
    \norm{K_{\theta}-K_{\theta,N}}_{L^2(\mathcal{D} \times \mathcal{D})}=\delta(N),
\end{equation}
and taken that 
\begin{equation}
    \sum_{i=0}^M \norm{K^i_{\theta}-K^i_{\theta,N}}_{L^2(\mathcal{D} \times \mathcal{D})} <  \sum_{i=0}^\infty \norm{K^i_{\theta}-K^i_{\theta,N}}_{L^2(\mathcal{D} \times \mathcal{D})}
\end{equation}
we can use \eqref{usefull-identity} and \eqref{usefull-identity2} to get: 
\begin{equation}
    \norm{\sum_{i=0}^M \mathcal{T}_{K_{\theta}}^{i} g-\mathcal{F}(\cdot;g, K_{\bf \theta})} \leq \|g \|_{L^2(\mathcal{D})} \delta(N) \sum_{i=0}^{\infty} i \rho_m^{i-1} \leq \frac{\delta(N)  \|g\|_{L^2(\mathcal{D})}}{(1-\rho_m)^2}
\end{equation}
and
\begin{equation}
    \norm{\mathcal{S}_Kg - \mathcal{F}(\cdot;g, K_{\bf \theta})} \leq \frac{\epsilon_1  \|g\|_{L^2(\mathcal{D})}}{(1-\rho_m)^2} + \frac{\rho_m^{M+1}}{1-\rho_m}\|g\|_{L^2(\mathcal{D})} + \frac{\delta(N)  \|g\|_{L^2(\mathcal{D})}}{(1-\rho_m)^2}.
\end{equation}
As $M,N$ increase $\rho_m^M, \delta(N) \rightarrow 0$, and therefore, for any $\epsilon_3 >0$, we can select $M,N$ and $\epsilon_1$ such that 
\begin{equation}
    \norm{\mathcal{S}_Kg - \mathcal{F}(\cdot;g, K_{\bf \theta})}_{L^2(\mathcal{D})} \leq \epsilon_3,
\end{equation}
as required.
\color{black}
\end{proof}


We now turn to the non-linear integral operator case. We formulate a similar theorem to the UAT above, based on the notion of that we can approximate both the kernel $K$ and non-linearity $G$, from which the operator is defined. Given the building blocks for the linear case, we formulate the following result directly using the Recurrent Fredholm NN approximation. 

\begin{theorem}[FREDINOs as universal approximators of non-linear integral operators] Let $K:\mathcal{D}\times \mathcal{D} \rightarrow \mathbb{R}$ and $G:\mathcal{D} \rightarrow \mathcal{D}$ be a pair of kernel and non-linearity satisfying the integral equation $f(x) = g(x) + (\mathcal{T}_{K,G}f)(x)$, where the operator ${\cal T}_{K,G} : {\cal H} \to {\cal H}, (\mathcal{T}_{K,G})u = \int_{\mathcal{D}} K(x,z)G(u(z))dz$, is a contraction with contraction constant $\rho <1$. Furthermore, suppose that:
\begin{flalign}
    C_K:=\sup_{x \in \mathcal{D}} \int_{\mathcal{D}} |K(x,y)|dy < \infty, \hfill \text{ and} \\
    |G(x) - G(y)| \leq L_G|x-y|,
\end{flalign}
for ally $x,y \in \mathcal{D}$ and assume that $C_KL_G < 1$. Finally, let $\mathcal{S}_{K,G}$ represent the solution operator of the integral equation $f(x) = g(x) + (\mathcal{T}_{K,G}f)(x)$, i.e., $(\mathcal{S}_{K,G})g =f$. Then, for $\epsilon >0$, there exist neural networks $K_{ \theta}$ and $G_{\vartheta}$, for some parameter set $\tilde{\theta} = \{\theta, \vartheta\} \in \Theta \subset \mathbb{R}^P, P \in \mathbb{N}$, and a Recurrent Fredholm NN $\mathcal{F}(\cdot; \cdot, K_{\theta},G_{\vartheta})$, with $M$ hidden layers, such that:
\begin{eqnarray}
    \norm{\mathcal{S}g - \mathcal{F}(\cdot;g, K_{\bf \theta}, G_{\vartheta})}_{L^2(\cal D)} < \epsilon,
\end{eqnarray}
and the learned operator:
\begin{equation}
    (\mathcal{T}_{\tilde{\theta}}u)(x) := \int_{\mathcal{D}} K_{\theta}(x,z) G_{\vartheta}\big(u(z)\big) d z,
\end{equation}

is contractive, i.e., 
\begin{equation}
    \| \mathcal{T}_{\tilde{\theta}} u - \mathcal{T}_{\tilde{\theta}} v \|_{L^2(\cal D)} \leq \rho_{\tilde{\theta}} \|u-v\|_{L^2(\cal D)},
\end{equation}
for $\rho_{\tilde{\theta}} < 1$.
\end{theorem}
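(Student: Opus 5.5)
We follow the template of the linear theorem: first approximate the ingredients $K$ and $G$ by neural networks, then show that the approximate operator keeps a contraction constant below one, and finally split the error against the Recurrent Fredholm NN into three pieces.

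\textbf{Step 1: approximation of $K$ and $G$.} The natural contraction estimate for $\mathcal{T}_{K,G}$ uses the constant $C_K L_G<1$. It is cleanest in $\mathcal{X}$, but also works in $L^2$ via a Schur-type bound, which additionally needs $\sup_z\int_{\mathcal{D}}|K(x,z)|dx$; we would assume this or work in $\mathcal{X}$. Since $\mathcal{D}$ is compact, we would assume $K$ continuous (or use $L^1$-in-$y$ uniform density) and choose $K_\theta$ with
\[
\sup_x\int_{\mathcal{D}}|K(x,y)-K_\theta(x,y)|\,dy<\eta .
\]
This gives $C_{K_\theta}\le C_K+\eta$.

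For $G$ we need control of the Lipschitz constant, not merely uniform approximation. We would mollify $G$ (this does not increase $L_G$) and then approximate the smooth function in $C^1$ on a compact range containing the values of all iterates. By the universal approximation theorem in $C^1$ for smooth activations, this yields $G_\vartheta$ with $\|G-G_\vartheta\|_\infty<\eta$ and $L_{G_\vartheta}\le L_G+\eta$.

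\textbf{Step 2: contractivity of the learned operator.} We get
\[
\|\mathcal{T}_{\tilde\theta}u-\mathcal{T}_{\tilde\theta}v\|\le C_{K_\theta}L_{G_\vartheta}\|u-v\| \le (C_K+\eta)(L_G+\eta)\|u-v\|.
\]
Choosing $\eta$ small gives $\rho_{\tilde\theta}:=(C_K+\eta)(L_G+\eta)<1$. This also shows that the solution operator $\mathcal{S}_{\tilde\theta}$ exists, by the Banach fixed point theorem.

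\textbf{Step 3: error decomposition.} We split
\[
\|\mathcal{S}g-\mathcal{F}\|\le\|\mathcal{S}g-\mathcal{S}_{\tilde\theta}g\|+\|\mathcal{S}_{\tilde\theta}g-P^{M}_{\tilde\theta}g\|+\|P^M_{\tilde\theta}g-\mathcal{F}\|,
\]
where $P^M_{\tilde\theta}$ denotes $M$ exact Picard iterations.

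For the first term, with $f=\mathcal{S}g$ and $f_\theta=\mathcal{S}_{\tilde\theta}g$, subtracting the two fixed-point equations gives
\[
\|f-f_\theta\|\le \rho_{\tilde\theta}\|f-f_\theta\|+\|\mathcal{T}_{K,G}f-\mathcal{T}_{\tilde\theta}f\|.
\]
The last term is bounded by $\eta\,\|G(f)\|_\infty+C_{K_\theta}\,\eta$. Hence the first term is at most $O(\eta)/(1-\rho_{\tilde\theta})$. For the second term, the Picard a priori estimate gives $\rho_{\tilde\theta}^{M}\|f_\theta^{(1)}-g\|/(1-\rho_{\tilde\theta})$. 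The third term is the quadrature error of Proposition \ref{DNN_construction-rec}; Theorem \ref{FUA} (nonlinear) controls it, and uniformly in $M$ by the same telescoping with contraction factor. We then choose $\eta$, $M$ and $N$ so that the three terms sum to less than $\epsilon$.

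\textbf{Main obstacle.} The delicate point is Step 1 for $G$: uniform approximation alone does not preserve $C_KL_G<1$. The $C^1$-approximation on a bounded range, and the a priori bound keeping the iterates in that range (from contractivity and $\|g\|_\infty$), is where care is needed.
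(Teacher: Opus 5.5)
Your proposal has the same skeleton as the paper's proof. You approximate $K$ and $G$ by networks, show that $\rho_{\tilde\theta}=C_{K_\theta}L_{G_\vartheta}$ stays below $1$, and split $\|\mathcal{S}g-\mathcal{F}\|$ into a model error, a Picard truncation error of order $\rho^M/(1-\rho)$, and a quadrature term. You depart from the paper exactly where its argument is weakest.

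The paper approximates $K$ and $G$ only in $L^2$ and then does two things that do not hold. First, it asserts $\int_{\mathcal{D}}|K_\theta(x,z)-K(x,z)|\,dz<c\,\epsilon_K$ for every $x$, which does not follow from closeness in $L^2(\mathcal{D}\times\mathcal{D})$. Second, it deduces $L_{G_\vartheta}\to L_G$ from $|G_\vartheta(x)-G_\vartheta(y)|\le 2\epsilon_G+L_G|x-y|$, which is not a Lipschitz bound. Uniform closeness cannot control Lipschitz constants: $G+\epsilon\sin(s/\epsilon^2)$ is $\epsilon$-close to $G$ but has Lipschitz constant of order $1/\epsilon$. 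Your approximation of $K$ in $\sup_x\|\cdot\|_{L^1_y}$, and of a mollified $G$ in $C^1$, is what actually makes $\rho_{\tilde\theta}<1$ provable.

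For the model error, the paper compares the Picard iterates of $\mathcal{T}$ and $\mathcal{T}_{\tilde\theta}$ inductively. It uses the true constant $\rho$ and a one-step bound $\tilde\epsilon=C_{K_\theta}\epsilon_G+m\epsilon_K$ whose $m=\sup|G(u)|$ changes with the iterate. Subtracting the two fixed-point equations, as you do, needs the perturbation only at the single function $f=\mathcal{S}g$, and is cleaner. You are also right that $C_KL_G$ controls a sup-norm Lipschitz constant rather than an $L^2$ one; the paper's estimate is really in $\|\cdot\|_\infty$ and is then relabelled as $L^2$.

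The price is some extra hypotheses and, as written, a weaker contractivity conclusion, and you should state these explicitly.

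\textbf{Continuity of $K$ in $x$.} Approximating $K$ by networks in the norm $\sup_x\|K(x,\cdot)\|_{L^1}$ is possible only if $x\mapsto K(x,\cdot)$ is continuous into $L^1(\mathcal{D})$, because this property passes to uniform limits of continuous kernels. So your parenthetical alternative does not remove the need for continuity in $x$.

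\textbf{The $L^2$ setting.} The Schur bound gives the $L^2$ Lipschitz constant $L_G\sqrt{C_KC_K'}$ with $C_K'=\sup_z\int_{\mathcal{D}}|K(x,z)|\,dx$. So $L^2$-contractivity needs $L_G\sqrt{C_KC_K'}<1$, not merely $C_K'<\infty$. Otherwise, prove contractivity in $\mathcal{X}$ and obtain only the $L^2$ error bound, via $\|\cdot\|_{L^2}\le|\mathcal{D}|^{1/2}\|\cdot\|_\infty$.

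\textbf{Local versus global contractivity.} A $C^1$ approximation on $[-R,R]$ bounds $L_{G_\vartheta}$ only on $[-R,R]$. Step 2 therefore gives a contraction only on $\{\|u\|_\infty\le R\}$, whereas the statement asserts it for all $u,v$. A simple repair, which also dissolves your ``main obstacle'', is to let $G_\vartheta$ be the one-hidden-layer ReLU network realizing the piecewise-linear interpolant of $G$ on a grid of $[-R,R]$ with mesh $h$, extended by constants outside $[-R,R]$.
\begin{itemize}
\item Its slopes are difference quotients of $G$, so its global Lipschitz constant is at most $L_G$, and $\sup_{|s|\le R}|G(s)-G_\vartheta(s)|\le L_Gh$.
\item Then $\rho_{\tilde\theta}\le(C_K+\eta)L_G<1$ globally, with no mollification or $C^1$ universal approximation needed.
\item $R$ only has to dominate $\|\mathcal{S}g\|_\infty\le(\|g\|_\infty+C_K|G(0)|)/(1-C_KL_G)$, so that your perturbation estimate at $f=\mathcal{S}g$ applies.
\end{itemize}
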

\begin{proof}
As above, we will use the density result of the  set of neural networks ${\cal N}$ in the space $L^2(\mathcal{D} \times \mathcal{D})$. This will allow us to approximate both $K$ and $G$. Specifically, we have that for any $K \in L^2(\mathcal{D} \times \mathcal{D})$ and any $\epsilon >0$, there exists a neural network $N( \cdot, \cdot ;\theta \in \Theta)$, such that $K_{\theta}(\cdot, \cdot) := N(\cdot, \cdot ; \theta)$ is an approximation of the kernel function $K$ in $\mathcal Z$ in the sense that:
\begin{equation}
\| K_{\theta} - K \|_{L^2(\cal D \times \cal D)} < \epsilon_K,
\end{equation}
for any $\epsilon_K > 0$. Similarly, for any $G$, there exists network $N(\cdot; \vartheta \in \Theta)$ such that 
\begin{equation}
    \| G_{\vartheta} - G \|_{L^2(\cal D)} < \epsilon_G,
\end{equation}
for any $\epsilon_G > 0$ and where $G_{\vartheta} = N(\cdot; \vartheta)$. The above are guaranteed by the argument in Theorem \ref{theorem}. This now allows us to consider bounds for the approximations of $G$ and $K$. 
\par We start by assuming that there exists constant $L_{G_{\vartheta}}$ such that $|G_{\vartheta}(x) - G_{\vartheta}(y)| \leq L_{G_{\vartheta}}|x-y|$. Then, consider $u,v \in \mathcal{X}$ and, for any $x \in \mathcal{D}$: 
    \begin{flalign}
        |(\mathcal{T}_{\tilde{\theta}}u)(x) - & (\mathcal{T}_{\tilde{\theta}} v)(x) |  = \left|\int_{\mathcal{D}} K_{\theta}(x,z)G_{\vartheta}(u(z))dz -  \int_{\mathcal{D}} K_{\theta}(x,z)G_{\vartheta}(v(z)) dz\right| \notag \\ 
        &\leq  \int_{\mathcal{D}}|K_{\theta}(x,z)|L_{G_\vartheta}|u(z) - v(z)| dz  \leq L_{G_\vartheta}\|u-v\|\int_{\mathcal{D}}|K_{\theta}(x,z)|dz =: L_{G_{\vartheta}}C_{K_\theta}\| u - v\|_{L^2(\cal D)}, 
    \end{flalign}
from which we have $\|\mathcal{T}_{\tilde{\theta}}u -  \mathcal{T}_{\tilde{\theta}}v\|_{L^2(\cal D)} \leq L_{G_{\vartheta}}C_{K_\theta}\|u-v\|_{L^2(\cal D)}.$

Hence, it suffices to show that $L_{G_{\vartheta}}C_{K_\theta}=: \rho_{\tilde{\theta}} < 1$. We can do this by comparing to the corresponding constants $L_G$ and $C_K:= \int_{\mathcal{D}} |K(x,z)|dz$. To this end, assuming that $\|K_{\theta} - K\|_{L^2(\mathcal{D})} < \epsilon_K$, for any $x\in \mathcal{D}$ we have: 
\begin{flalign}\label{eq1}
 \left| \int_{\mathcal{D}} |K_{\theta}(x,z)|dz - \int_{\mathcal{D}} |K(x,z)|dz \right| &\leq \int_{\mathcal{D}}\left|\left|K_{\theta}(x,z)\right| - \left|K(x,z)\right| \right| dz \notag \\
 &\leq \int_{\mathcal{D}}|K_{\theta}(x,z) - K(x,z)|dz < c \epsilon_K,
\end{flalign}
where $c > 0$ is the volume of $\mathcal{D}$, and therefore: $\| C_{K_\theta} - C_K\|_{L^2({\cal D})} < C\epsilon_K$. Since $\epsilon_K \rightarrow 0$, we can deduce that $C_K \rightarrow C$. 

\par We now turn to the approximation $G_{\vartheta}$, for which we can write $\|G_{\vartheta} - G\|_{L^2(\mathcal{D})} < \epsilon_G$. We want to show that the Lipschitz constant $L_{\vartheta}$ is arbitrarily close to $L_G$. This follows as shown below:
\begin{flalign}\label{eq2}
    \left|G_{\vartheta}(x) - G_{\vartheta}(y) \right| \leq \left|G_{\vartheta}(x) - G(x) \right| + \left|G(x) - G(y) \right| + \left|G(y) - G_{\vartheta}(y) \right|  \leq 2\epsilon_G + L_G|x-y|.
\end{flalign}
As above, we know that there exists a choice of  parameters $\vartheta$, such that $\epsilon_G \rightarrow 0$, and therefore $L_{G_{\vartheta}} \rightarrow L_G$. From \eqref{eq1} and \eqref{eq2}, it follows that: 
\begin{equation}
    \| \mathcal{T}_{\tilde{\theta}} u -  \mathcal{T}_{\tilde{\theta}}v\|_{L^2(\mathcal{D})}  \leq \rho_{\tilde{\theta}}\|u - v\|_{L^2(\mathcal{D})} \rightarrow L_{G}C_K\|u-v\|_{L^2(\mathcal{D})}, 
\end{equation}
and hence $\rho_{\tilde{\theta}} < 1$, since $C_K L_G<1$, by assumption.



We now turn to the solution operator. Since we are in the non-linear case, note that we have:
\begin{equation}
S_{{K, G}} \, g = \lim_{n \to \infty} \mathcal{T}^{(n)} g \quad \text{and} \quad S_{K_{\theta}, G_{\vartheta}} \, g = \lim_{n \to \infty} \mathcal{T}_{K_{\theta}, G_{\vartheta}}^{(n)} \, g.
\end{equation}
For brevity, we will use the notation $S \equiv S_{K,G}$ and $T_{\tilde{\theta}} \equiv  T_{K_{\theta}, G_{\vartheta}}, S_{\tilde{\theta}} \equiv  S_{K_{\theta}, G_{\vartheta}}$, and we also omit the norm subscripts. We now want to prove that $S_{\tilde{\theta}} g \to S_{\mathcal{G}} g$. To this end, we can consider an inductive argument. Consider first the case where $n=1$:
\begin{align}
\|\mathcal{T}_{\tilde{\theta}} u - &\mathcal{T} u\| = \left\| \int_{\mathcal{D}} K_\theta(x,z) G_{\vartheta}(u(z)) dz - \int K(x,z) G(u(z)) dz \right\| \nonumber\\
&\leq \left| \int_{\mathcal{D}} K_\theta(x,z) G_{\vartheta}(u(z)) dz - \int_{\mathcal{D}} K_\theta(x,z) G(u(z)) dz + \int_{\mathcal{D}} K_\theta(x,z) G(u(z)) dz - \int_{\mathcal{D}} K(x,z) G(u(z)) dz \right| \nonumber\\
&\leq \int_{\mathcal{D}} |K_\theta(x,z)| |G_{\vartheta}(u(z)) - G(u(z))| dz + \int_{\mathcal{D}} |G(u(z))| |K_\theta(x,z) - K(x,z)| dz.
\end{align}
Hence, for any $u \in \mathcal{H}$:
\begin{equation}\label{nl-bound}
\|\mathcal{T}_{\tilde{\theta}} u - \mathcal{T} u\| \leq \tilde{\epsilon}:= C_{K_{\theta}}  \epsilon_G + m \varepsilon_K,
\end{equation}
where $m:= \sup_{x \in \mathcal{D}} |G(u(x))|$.

We now consider the case that  $n=2$. Let $u' := \mathcal{T} u$ and $u'' := \mathcal{T}_{\tilde{\theta}} u$. Then:
\begin{align}
\|\mathcal{T}^{(2)}_{\tilde{\theta}}u - \mathcal{T}^{(2)}u \| = \|\mathcal{T}_{\tilde{\theta}} u'' - \mathcal{T} u'\|  =& \|\mathcal{T}_{\tilde{\theta}} u' - \mathcal{T} u' + \mathcal{T} u' - \mathcal{T} u''\| 
\leq  \|\mathcal{T}_{\tilde{\theta}} u' - \mathcal{T} u'\| + \|\mathcal{T} u' - \mathcal{T} u''\| \nonumber\\
&\leq \tilde{\epsilon} + \rho \|u' - u''\| \leq \tilde{\epsilon} + \rho \tilde{\epsilon},
\end{align}
where we have used \eqref{nl-bound} and the fact that $\mathcal{T}$ is a contraction.
It easily follows that, for any $n\geq 2$:
\begin{equation}
\|\mathcal{T}_{\tilde{\theta}}^{(n)} u - \mathcal{T}u^{(n)}\| \leq \sum_{i=0}^{n-1} \tilde{\epsilon} \rho^i,
\end{equation}
and as $n \to \infty$:
\begin{equation}\label{nl-bound-2}
\|S_{{\tilde{\theta}}}g - Sg\| = \| \lim_{n \rightarrow \infty}\mathcal{T}_{{\tilde{\theta}}}^{(n)}u - \lim_{n \rightarrow \infty}\mathcal{T}^{(n)}u \| =  \lim_{n \rightarrow \infty} \|\mathcal{T}_{\tilde{\theta}}^{(n)} u - \mathcal{T}u^{(m)}\| \leq \frac{\tilde{\epsilon}}{1 - \rho}.
\end{equation}

Finally, we have to show that the (Recurrent) Fredholm Neural Network, $\mathcal{F}(\cdot; g, K_{\theta}, G_{\vartheta})$, with $M$ hidden layers, can approximate the solution operator arbitrarily close. We have: 
\begin{align}
\|S g - \mathcal{F}(\cdot; g, K_\theta, G_\vartheta)\| 
&= \|S g - S_{\Theta} g + S_{{\tilde{\theta}}} g - \mathcal{F}(\cdot; g,  K_\theta, G_\vartheta)\| \nonumber\\
&\leq \|S g - S_{{\tilde{\theta}}} g\| + \|S_{{\tilde{\theta}}} g - \mathcal{F}(\cdot; g,  K_\theta, G_\vartheta)\|.
\end{align}

For the first term we have the bound given by \eqref{nl-bound-2}. For the second term, we get:
\begin{align}\label{nl-bound-3}
\|S_{{\tilde{\theta}}} g - \mathcal{F}(\cdot; g, K_\theta,  G_\vartheta)\| 
&= \|S_{{\tilde{\theta}}} g - \mathcal{T}_{{\tilde{\theta}}}^{(M)} g + \mathcal{T}_{{\tilde{\theta}}}^{(M)} g - \mathcal{F}(\cdot; g, K_\theta, G_\vartheta)\| \nonumber\\
&\leq \|S_{K_\theta, G_\theta} g - \mathcal{T}_{{\tilde{\theta}}}^{(M)} g\| + \|\mathcal{T}_{{\tilde{\theta}}}^{(m)} g - \mathcal{F}(\cdot; g, K_\theta, G_\vartheta)\| \nonumber\\
&\leq \frac{\rho^M}{1-\rho} \|\mathcal{T}_{{\tilde{\theta}}} g - g\| + {\delta(M)}\frac{1 - \rho^M}{1-\rho},
\end{align}
where in the last step we have used the error bound for the $M-$th iteration in the fixed point estimation and the error bound induced by the integral discretization in the Recurrent Fredholm NN.

\par By combining \eqref{nl-bound-2} and \eqref{nl-bound-3} we obtain the error bound when approximating the true solution operator with the Recurrent Fredholm NN. It follows that, for any $\epsilon >0$, we can choose $\epsilon_K, \epsilon_G$ and the Fredholm NN depth, $M$, such that
\begin{equation}
 \|S g - \mathcal{F}(\cdot; g, K_\theta, G_\vartheta)\|     \leq \frac{\tilde{\epsilon}}{1 - \rho} + \frac{\rho^M}{1-\rho} \|\mathcal{T}_{{\tilde{\theta}}} g - g\| + {\delta(M)}\frac{1 - \rho^M}{1-\rho} < \epsilon,
\end{equation}
completing the proof.
\end{proof}

\color{black}

\subsection{The Optimization Problem}
With the theoretical framework described above, the operator learning problem can be formulated as a training problem, whereby the goal is to minimize appropriately defined loss functions. 
\par We first, consider the linear problem. To clearly state the learning objective, we aim to estimate the unknown kernel $K$, such that for any $\{g, f\}$, $g_i, f_i \in \mathcal{H}$ or $\mathcal{X}$, the FIE $f = g + \mathcal{T}_Kf$ is satisfied. We model the unknown kernel $K_{\theta}$, depending on some parameter set $\theta \in \Theta$. This kernel model is used in the Fredholm NN, denoted $\mathcal{F}(\cdot;g, K_{\theta})$, whose output is the function $f$. Hence, we aim to solve the regularization problem: 
\begin{equation}\label{loss-linear}
\begin{aligned}
\min_{K_{\theta} \in L^{2}({\cal D} \times {\cal D})} \| f - \mathcal{F}(\cdot; g, K_{\theta}) \|_{L^{2}({\cal D})}^{2} + \mathcal{R}(\theta),
\end{aligned}
\end{equation}
where
 \begin{equation}
    \mathcal{R}(\theta) =
     \lambda_{K}\|K_{\theta} \|^2_{L^2({\cal D} \times {\cal D})}. 
 \end{equation}

Similarly, for the non-linear problem, we aim to estimate the unknown kernel $K$ and non-linearity $G$, such that for any $\{g, f\}$, $g_i, f_i \in \mathcal{H}$ or $\mathcal{X}$, the FIE $f = g + \mathcal{T}_{K,G}f$ is satisfied. We now consider the model $K_{\theta}$, as well as a model for the non-linearity $G_{\vartheta}$, with $\vartheta \in \Theta$. The corresponding Recurrent Fredholm NN, $\mathcal{F}(\cdot; g, K_{\theta}, G_{\vartheta})$ is trained by solving the regularization problem:
\begin{equation}\label{loss-non-linear}
\begin{aligned}
\min_{K_{\theta} \in L^{2}({\cal D} \times {\cal D})} \| f - \mathcal{F}(\cdot; g, K_{\theta}, G_{\vartheta}) \|_{L^{2}({\cal D})}^{2} + \mathcal{R}(\theta, \vartheta),
\end{aligned}
\end{equation}
with
\begin{equation}
    \mathcal{R}(\theta, \vartheta) =  \lambda_{K}\|K_{\theta} \|^2_{L^2} + \lambda_{G}\|G_{\vartheta} \|^2_{L^2}.
\end{equation}

\par More details regarding the definition of these loss functions and how the Fredholm NN architecture enhances the training process is given below. However, it is first important to discuss the intuition and interpretation of the training framework described in this section. Fredholm NN consider a specific, fixed relationship between the weights and biases, which is determined by the form of underlying integral operator of the FIE. Here, since these values are unknown we essentially construct a \emph{trainable Fredholm NN}: we are assuming that there exists a Fredholm NN that approximates the solution of the FIE, but whose parameters are unknown (as in classical NN training). 
\par However, given the deterministic relationship between the parameters (e.g., we know that the weights connecting each pair of consecutive hidden layers are the same and correspond to the discretized values of the kernel $K(x,y)$), we are essentially training on a constrained space of possible parameter values. In other words, we are training a deep neural network, but constrained by the mathematical formulation of the FIE, thereby reducing the space of possible parameter values and inducing interpretability in the model. Hence, in practice, this can be achieved by considering a simple model for the unknown parameters, which is much easier to train. Training the simple model therefore becomes equivalent to training the full Fredholm NN (as the Fredholm NN parameters are simply the outputs of the parameter models, $K_{\theta}, G_{\vartheta}$). This is further discussed in the following section.

\subsection{Ensuring the construction of contractive FREDINOs}

In this section we discuss in more detail how the proposed methodology ensures that the learned models indeed define contractive operators.   Recall that the solution to the given operator learning is not unique. To this end, we introduce the regularized problem for the integral operator:
\begin{equation}\label{kalamata-1}
\begin{aligned}
\min_{K \in L^{2}({\cal D} \times {\cal D}), \,\,\, \| K \|_{L^2({\cal D}\times {\cal D})} < 1} \| f - \mathcal{T}_{K} f - g \|_{L^{2}({\cal D})}^{2} + \lambda \| K \|_{L^{2}({\cal D} \times {\cal D})}^{2},
\end{aligned}
\end{equation}
or the equivalent problem for the solution operator
\begin{equation}\label{kalamata_2}
    \begin{aligned}
        \min_{K \in L^{2}({\cal D} \times {\cal D}), \| K \|_{L^2({\cal D}\times {\cal D})} <1} \| f - S_K g \|_{L^{2}({\cal D})}^{2} + \lambda \| K \|_{L^{2}({\cal D} \times {\cal D})}^{2}.
    \end{aligned}
\end{equation}
In both problems \eqref{kalamata-1} and \eqref{kalamata_2} we introduce a regularization term $\lambda \| K \|_{L^{2}({\cal D} \times {\cal D})}^{2}$, which is a Tikhonov type regularization term that allows us to select a unique kernel $K$ out of the infinite set of kernels that may fit our data. Formulation \eqref{kalamata_2} of the problem is precisely suited for our treatment, using Fredholm NNs. since the constraint $\| K \|_{L^2({\cal D}\times {\cal D})} < 1$ forces us to learn a contractive integral operator that can generate the observed training data. This constraint is identical to using the Fredholm NN pass during training, since its output converges only if the operator $\mathcal{T}_{K}$ is a contraction so that the solution operator can be calculated by the Neumann series. Another equivalent way of formulating the optimization problem \eqref{kalamata_2} is as below:
\begin{equation}\label{kalamata-3}
\begin{aligned}
    \min_{K \in L^{2}({\cal D}\times {\cal D})} \| f - S_Kg \|_{L^{2}({\cal D})}^{2} + \lambda \| K \|_{L^2({\cal D}\times {\cal D})}^2 + {\mathbb I}_{\| \cdot \|_{L^2({\cal D}\times {\cal D})} < 1}(K)
\end{aligned}
\end{equation}
where by ${\mathbb I}_{\| \cdot \|_{L^2({\cal D}\times {\cal D})} < 1}(\cdot)$, we denote the convex indicator function of the interior of the unit ball in $L^2({\cal D}\times {\cal D})$, which assumes the value $0$ for any $K$ such that $\| K \|_{L^2({\cal D}\times {\cal D})}<1$ and $+\infty$ otherwise.  

Problem \eqref{kalamata-3} is an infinite dimensional problem in the function space $L^2({\cal D}\times {\cal D})$, that will yield as solution the optimal kernel $K$, satisfying also the contraction property for the corresponding integral operator $\mathcal{T}_{K}$. In practice, problem \eqref{kalamata-3} is replaced by an approximate, finite dimensional problem, that uses in lieu of the whole $L^2({\cal D}\times {\cal D})$ a dense subset ${\cal K}_{\theta} \subset L^2({\cal D}\times {\cal D})$, which is finite dimensional (e.g., a neural network approximation of the kernels $K$ generating the integral operators $\mathcal{T}_{K}$). In fact, we replace problem \eqref{kalamata-3} by the finite dimensional approximation
\begin{equation}\label{kalamata-4}
\begin{aligned}
    \min_{K_{\theta} \in {\cal K}_{\theta}\subset L^{2}({\cal D}\times {\cal D})} \| f- S_K g -f \|_{L^{2}({\cal D})}^{2} + \lambda \| K _{\theta}\|_{L^2({\cal D}\times {\cal D})}^2 + {\mathbb I}_{\| \cdot \|_{L^2({\cal D}\times {\cal D})} < 1}(K_{\theta}).
\end{aligned}
\end{equation}
Given that the Fredholm NN  blows up if the associated kernel satisfies $\| K_{\theta}\|_{L^2({\cal D}\times {\cal D})} \ge 1$, we see that the set where $S_{K_{\theta}}g = {\cal F}(\cdot ; g, K_{\theta})$ (where ${\cal F}(\cdot ; g, K_{\theta})$ is the Fredholm NN generated by the approximate kernel $K_{\theta}$), coincides with the set where ${\mathbb I}_{\| \cdot \|_{L^2({\cal D}\times {\cal D})} < 1}(K_{\theta})=0$. Hence, problem \eqref{kalamata-4} in terms of kernel approximations $K_{\theta} \in {\cal K}_{\theta}$, coincides with problem
\begin{equation}\label{kalamata-5}
\begin{aligned}
\min_{ K_{\theta} \in {\cal K}_{\theta}} \| f-  {\cal F}(\cdot; g, K_{\theta}) \|_{L^{2}({\cal D})}^2 + \lambda \| K_{\theta} \|_{L^2({\cal D}\times {\cal D})}^2,
\end{aligned}
\end{equation}
as provided in the aforementioned loss functions.

To summarize, our goal is to solve problem \eqref{kalamata-3}, which is approximated by the finite dimensional problem \eqref{kalamata-4} which, as shown above, is equivalent to problem \eqref{kalamata-5}, formulated in terms of Fredholm NNs. This shows how we can use the custom Fredholm NN architecture to ensure that our results adhere to the contractivity requirement, i.e., the forward pass itself it equivalent to solving an optimization problem, but within the constrained space of contractive operators. 

\par An equivalent argument holds for the non-linear FIE problem. The schematic representation of the proposed framework is given in Fig. \ref{fig:fred_No}.
\begin{figure}
    \centering
    \includegraphics[width=1.1\textwidth]{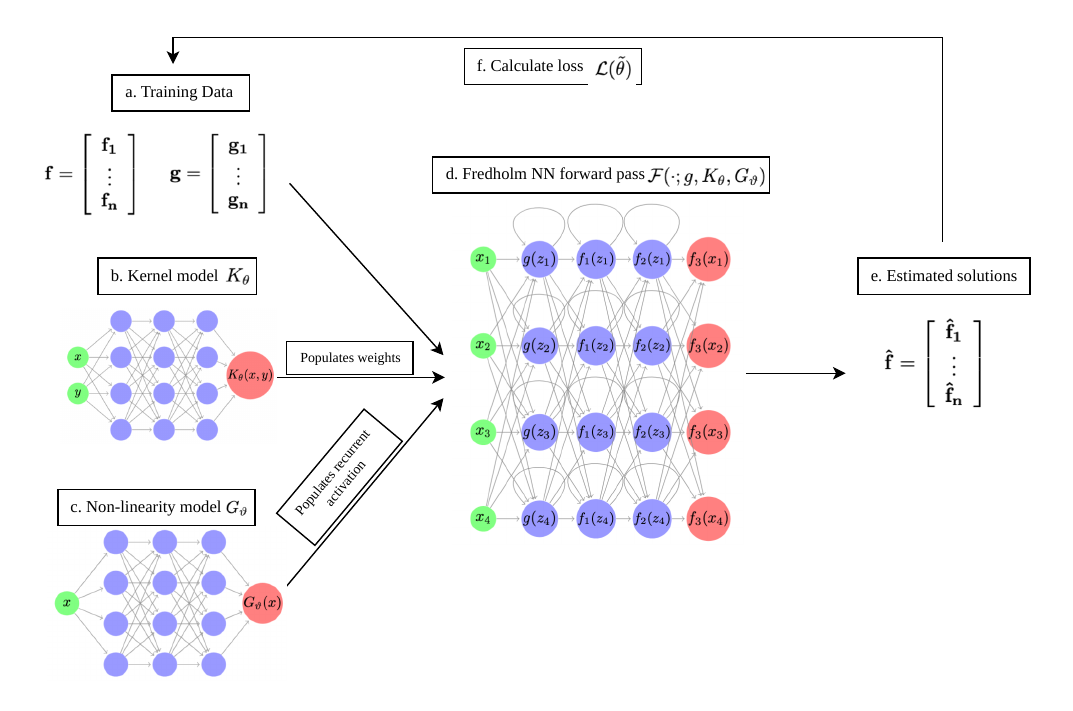}
    \caption{The Fredholm Neural Operator (FREDINO) framework: The training data consists of a family of functions $g_i$ and the corresponding solutions to the FIE $f_i$, for $i=1,\dots, n$. We model the unknown integral kernel and the non-linearity with two neural networks $K_{\theta}(x,y)$ and $G_{\vartheta}(x)$. These models then populate the parameters of the Fredholm Neural Network. The loss function $\mathcal{L}(\Theta)$ is calculated by comparing output of the populated Fredholm NN, $\hat{f}_i, i=1,\dots,n$ with the training data. For brevity, the schematic refers to the case of the non-linear FIE, using the Recurrent Fredholm NN. The linear case is simpler, where we only consider $K_{\theta}$.}
    \vspace{-0.5cm}
    \label{fig:fred_No}
\end{figure}

\section{Learning Elliptic PDEs with the Fredholm Integral Neural Operators}\label{pde-sec}
Finally, we show how FREDINOS can be employed in the setting of elliptic PDEs. Given the connection between integral operators and the general theory of PDEs (see \cite{brebbia2016boundary}), the proposed approach can find applications in many similar domains. In this paper, we focus on the case of semi-linear elliptic PDEs, continuing from our previous work in \cite{georgiou2025fredholm2} where the application of Fredholm NNs to various elliptic PDEs was rigorously established, utilizing Potential Theory and the Boundary Integral Equation (BIE) method, in order to develop the \emph{Potential Fredholm Neural Network} (PFNN), which replicates the double layer potential formulation (also see \cite{deng1996boundary, sakakihara1987iterative}). Specifically, the PFNN consists of a Fredholm NN for the solution of the BIE and an extra layer and a final hidden layer that replicates the double layer potential integral to obtain the solution to the PDE.
A schematic of this is given in Fig. \ref{fig:pfnn}, and the mathematical representation for the semi-linear case is given below. Note that, for the present work, we consider specifically the 2D PDE example, in line with the methodology developed in \cite{georgiou2025fredholm, georgiou2025fredholm2}. Higher dimensional cases require some different handling due to the corresponding fundamental solutions for $d\geq 3$, and will be considered in subsequent work, as the goal of the present paper is to develop the core methodology of the Fredholm Neural Operator.
\begin{figure}
    \centering
    \includegraphics[width=0.65\textwidth]{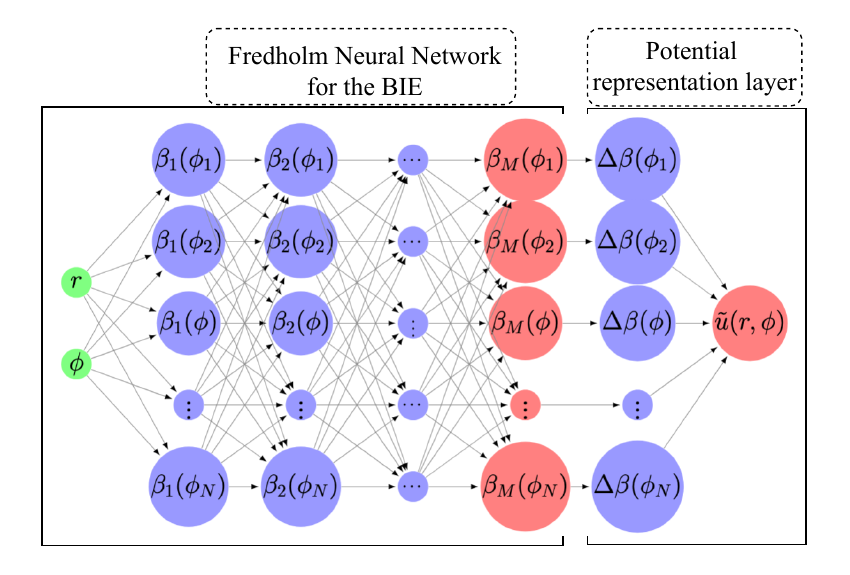}
    \caption{The Potential Fredholm Neural Network (PFNN) for the solution of 2D linear elliptic PDEs: the first component is a Fredholm NN solving the Boundary Integral Equation and the final hidden layer corresponds to the double layer potential formulation of the solution to the PDE, $\tilde{u}(r, \phi)$.}
    \label{fig:pfnn}
    \vspace{-0.5cm}
\end{figure}
\par Consider the 2D non-linear elliptic PDE:
\begin{eqnarray}\label{nl-pde}
\begin{cases}
 \Delta u(x) = \psi(u(x),x), \quad x \in \Omega= \{x,y \in \mathbb{R} : x^2 + y^2 \leq 1 \} \\ 
u(x) = g(x), \quad x \in \partial \Omega,
\end{cases}
\end{eqnarray}
where $\psi:\mathbb{R}^2 \rightarrow \mathbb{R}$ is a a non-linear source term. This can be solved using the fixed point scheme (see approach as described in \cite{georgiou2025fredholm2}):
\begin{eqnarray}\label{nl-pde2}
\begin{cases}
 \Delta u_{n+1}(x) -\lambda u_{n+1}(x)= \tilde{\psi}(u_n(x),x), \quad x \in \Omega\\ 
u(x) = g(x), \quad x \in \partial \Omega,
\end{cases}
\end{eqnarray}
where $\tilde{\psi}(u(x),x) := -\lambda u(x) + \psi(u(x),x)$,
where each iteration is solved using the PFNN. 
\par As an equivalent representation, we can adopt a coupled, implicit integral formulation of the double layer potential and BIE, as follows (see \cite{georgiou2025fredholm2} for proof of the following representation):
\begin{flalign}\label{bie1}
u(x)=\int_{\partial\Omega}(\beta(y)-&\beta(x^*))\left(\frac{\partial\Phi(x,y)}{\partial n_y}- \frac{\partial\Phi(x^*,y)}{\partial n_y}\right)d\sigma_y  +\beta(x^*)\left(\frac12+\int_{\Omega}\lambda\delta\Phi(x,y)dy\right) \notag \\ & +\int_{\Omega}\Phi(x,y)\tilde{\psi}(u(y), y)dy+\int_{\partial\Omega}\beta(y)\frac{\partial\Phi(x^*,y(s))}{\partial n_y}d\sigma_y =:  \mathcal{P}(x; \Phi_{\theta}, \beta, u), \,\,\, x \in \Omega,
\end{flalign}
\begin{flalign}\label{bie2}
g(x) =
\int_{\partial\Omega}
\beta(y)
\frac{\partial \Phi(x,y)}{\partial n_y}
d\sigma_y+\int_{\Omega} \Phi(x,y) \tilde{\psi}(u(y),y)dy + \frac{1}{2}\,\beta(x) =: \mathcal{D}(x;\Phi_{\theta}, \beta, u), \,\,\, x \in \partial\Omega,
\end{flalign}
where $x \in \Omega$ and $x^* \in \partial \Omega$ is the corresponding point on the boundary, i.e., $x = (r \cos(\theta), r \sin(\theta))$ and $x^* = (\cos(\theta), \sin(\theta))$, $\Phi$ is the fundamental solution of the Helmholtz PDE depending on the modified Bessel function of the second kind, $\Phi(x,y) = -\frac{1}{2\pi}K_0(|x-y|)$, and $ \delta \Phi(x,y) := \Phi(x,y) - \Phi(x^*,y)$. 

\par By looking directly at the problem as a non-linear PDE however, the fundamental solution $\Phi$ in standard sense does not exist. Hence, we can rather consider the problem of learning a surrogate model $\Phi_{\theta}$ such that \eqref{bie1} and \eqref{bie2} are satisfied, for any boundary function $g(x)$. This way, we learn the operator $\mathcal{A}:\mathcal{X} \rightarrow \mathcal{X}$, with $\mathcal{X} := \mathcal{C}(\Omega, \mathbb{R})$, such that: 
\begin{eqnarray}\label{nl-pde3}
\begin{cases}
 \mathcal{A} u(x)  = \tilde{\psi}(u(x),x), \quad x \in \Omega\\ 
u(x) = g(x), \quad x \in \partial \Omega,
\end{cases}
\end{eqnarray}
by learning $\Phi_{\theta}$ and the corresponding integral operators that define the coupled integral equations \eqref{bie1}, \eqref{bie2}.
The standard way to construct the loss function to solve this problem would be to combine two terms that incorporate the system of integral equations:
\begin{equation}\label{loss-pfnn}
\begin{aligned}
\min_{\Phi_{\theta} \in L^{2}({\cal D} \times {\cal D})} \| u- \mathcal{P}(\cdot; \Phi_{\theta}, \beta, u) \|_{L^{2}({\cal D})}^{2} + \|g - \mathcal{D}(\cdot;\Phi_{\theta},\beta, u) \|_{L^{2}({\cal D})}^{2}+ \mathcal{R}(\theta),
\end{aligned}
\end{equation} 
the regularization term is given by $\mathcal{R}(\theta) = \lambda_K \| \Phi_{\theta}\|_2^2$. 
\par However, we can instead use the PFNN framework to ensure that the learned model is such that the integral operator in the BIE is a contraction, whilst simultaneously ensuring that the \eqref{bie1} and \eqref{bie2} are satisfied. 
Therefore, we denote the PFNN by $\mathcal{P}^F(\cdot; g, \Phi)$, which approximates the solution $u(x)$, and construct the loss function:
\begin{equation}\label{loss-pfnn-2}
    \min_{\Phi_{\theta} \in L^{2}({\cal D} \times {\cal D})} \| u- \mathcal{P}^F(\cdot; g, \Phi_{\theta}) \|_{L^{2}({\cal D})}^{2} + \mathcal{R}(\theta).
\end{equation}
Notice how this approach essentially absorbs the second term of the loss function \eqref{loss-pfnn}, by enforcing the PFNN architecture in the forward pass during training. This is in line with the notion mentioned above, whereby we construct a \emph{trainable Potential Fredholm NN} whose output matches the solution of the PDE, and its architecture implicitly enforces the satisfaction of the BIE, simultaneously.
  
\section{Numerical Results}\label{sec5}
In this section, we consider various examples to assess the proposed methodology, in particular linear and nonlinear integral operators of Frehdolm type in arbitratu dimensions as well as nonlinear elliptic PDEs in 2D.  Throughout the numerical examples, we base the approach on the form of input functions used in \cite{fabiani2025randonets}. Specifically, as inputs $g$ we use a superposition of a Gaussian mixture, a cubic polynomial, and sinusoids:
\begin{equation}\label{g-functions}
  g(x) = \sum_{i=1}^{N_1} w_i \exp \big(- s_i (x-c_i)^2\big)
  + a_0 + a_1 x + a_2 x^2 + a_3 x^3 +
  \sum_{j=1}^{N_2} A_j \sin \big( 2\pi f_j x + \phi_j \big),
\end{equation}
with $x\in \mathbb{R}^d$, $N_1=200$, $N_2=10$, and parameters sampled i.i.d. from simple uniform ranges:
$w_i \sim \mathcal{U}([-1,1]^d)$, $s_i \sim\mathcal{U}([0,50]^d)$, $c_i\sim\mathcal{U}([0,1]^d)$,
$a_0,a_1,a_2,a_3\sim \mathcal{U}([-1,1]^d)$ 
$A_j\sim \mathcal{U}([-0.5,0.5]^d)$, $f_j\sim\mathcal{U}([0.5,8]^d)$ and
$\phi_j\sim \mathcal{U}([0,2\pi]^d)$.
Where necessary, transformations are performed to ensure appropriate scaling of the inputs. This is done in a way that ensures we still capture the richness of the input function space for the operator learning setting. Details are given in each example.
\par In all the examples below, we run multiple instances for the models. For training, we consider a set of pairs $\{g_i, f_i\}_{i=1}^M$, each of which contains data some arbitrary grid $\{x_j\}_{j=1}^N$, resulting in the dataset $\{g_i(x_j), f_i(x_j)\}_{i,j}$, for $i = 1,\dots M$ and $j = 1,\dots N$. To test the models on unseen functions, we generate $M_{test}$ new input functions $g^{test}_i(x)$ from the same family and feed $g^{test}_i$ into the Fredholm NN, $\mathcal{F}(x; g^{test}_i, {K}_{\theta})$ (or the PFNN). We assess the models by comparing the output $\hat{f}_i(x_j)=\mathcal{F}(x_j; g^{test}_i, {K}_{\theta})$, to the corresponding data $f_i(x_j)$, across $i = 1, \cdots, M_{test}$ and $j = 1,\cdots, N$. The relative error metrics as rel. $L_1, L_2$ and $L_{\infty}$ are reported. We present the median and 90/10 percentiles of the errors for all examples.  
\par Furthermore, in all examples we also plot the successive norms $ \hat \rho_{\theta}^{(k)}:= \|\hat{f}^{(k)}_{i} - \hat{f}^{(k-1)}_{i} \|$, where $\hat{f}^{(k)}_{i}(x)$ is the output of layer $k$ of the Fredholm Neural Network, corresponding to the $k-$th iteration of the fixed point estimation. This is used to show that the learned operator is indeed a contraction.
\par All experiments were done in \emph{Python} with an A100 GPU in \emph{Google Colab}. We begin with a linear example, based on an FIE used in \cite{georgiou2025fredholm}.

\subsection{Linear Integral Equations}

To learn the integral operator, we consider some model for the kernel $K_{\theta}(x,z)$, which will be fed into the Fredholm NN, $\mathcal{F}(\cdot; g_i, K_{\theta})$, for a given $g_i$. Learning the operator then consists of adjusting the parameters of the kernel iteratively until the output $\mathcal{F}(x; g_i, K_{\theta})$ is as close as possible to the training data $f_i(x)$, for all pairs of functions in the dataset. For training, the discretized version of the loss function \eqref{loss-linear} is used:
\begin{equation}\label{inverse-loss}
     \hat{ \mathcal{L}}(\theta) = \frac{1}{NM} \sum_{i=1}^{M}\sum_{j=1}^N \Big({f}_i(x_j) - \mathcal{F}(x_j; {K}_\theta, g_i) \Big)^2 + \hat{\mathcal{R}}(\theta),
\end{equation}
where we use the Frobenius norm for the regularization term $\hat{\mathcal{R}}(\theta) = \lambda_{K}\|K_{\theta} \|_F^2$.

\begin{example}\label{ex-linear}
We we solve the inverse  problem, i.e., finding a kernel function $K: [0,1] \times [0, 1] \to {\mathbb R}$ that satisfies the integral equation:
\begin{equation}\label{inverse-prob}
f(x) = g(x) +  \int_0 ^{1} K(x, z) f(z) dz.
\end{equation}

The training dataset consists of $\{g_i(x_j), f_i(x_j)\}_{i,j}$, with $i = 1,\dots M=600$, $j = 1,\dots N=500$, where the $g_i$ functionals are given by \eqref{g-functions} and the solutions $f_i(x)$ are generated using the true kernel: 
\begin{equation}
    K(x,z) = \lambda \big(\cos(25(x-z)) + \cos(7(x-z)) \big).
\end{equation}
Note that, due to linearity, inputs $g$ can be centered and normalized so that $\int_0^1 g(z)dz=0$ and $\|g\|_{L^2([0,1])}=1$. This is done for simplicity in training.
\par As a model of the kernel that will define our learned operator, ${K}_{\theta}(\cdot, \cdot)$, we used a shallow neural network with a single hidden layer, 64 neurons and a $\tanh$ activation function. For the forward pass, we use a Fredholm NN with 20 hidden layers and $N=500$ nodes per layer (corresponding to the $x-$ grid over which we observe the training data), and set $\kappa = 1$ (meaning the Fredholm NN will replicate the Picard iteration). The model is trained using the Adam optimizer and a piecewise decaying learning rate starting from $\lambda = 1$E$-02$, and 50,000 epochs. We perform 30 independent training runs. The mean and median loss is $5.72$E$-07$ and $3.10$E$-07$ respectively. As a first test, we calculate the relative errors across $M_{test} = 20$ test functions and the 30 trained models, on the same grid used for training, which produces a mean rel. $L_1 = 4.89$E$-04$, rel. $L_2 =5.07$E$-04$ and rel. $L_{\infty} = 7.31$E$-04$. Table \ref{tab:errors_for_examples} also shows the median and percentile error metrics across 100 test functions on an unseen grid with $N = 600$. In Fig. \ref{fig:example-1} we display the training loss, a comparison with true solution along with the 90-10 percentile bands, the convergence of towards the estimated solution showing the contractiveness of the learned operator and a contour of the estimated kernel.

\begin{figure}
    \centering
    \includegraphics[width=0.35\textwidth]{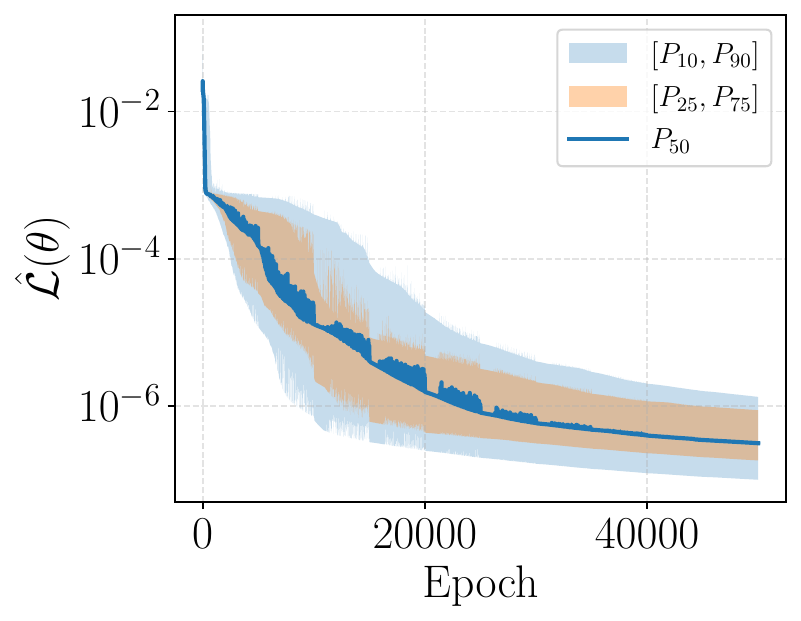}
    \includegraphics[width=0.35\textwidth]{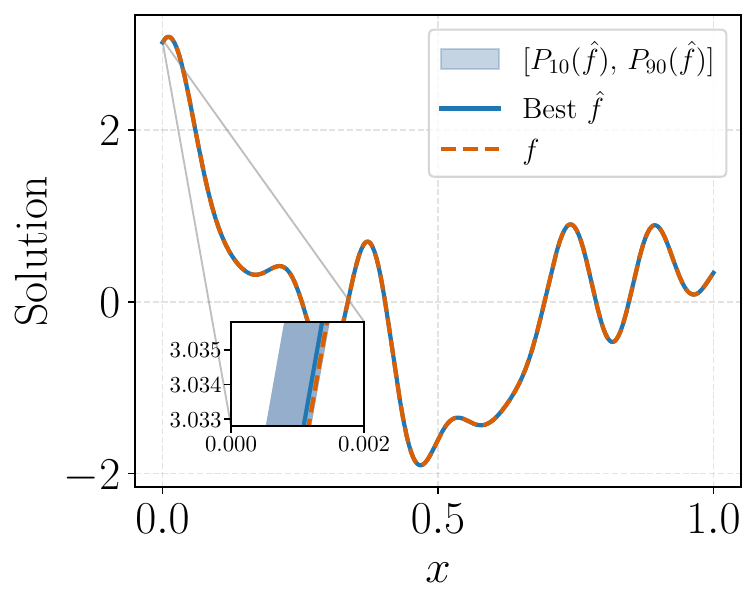}\\
    \includegraphics[width=0.35\textwidth]{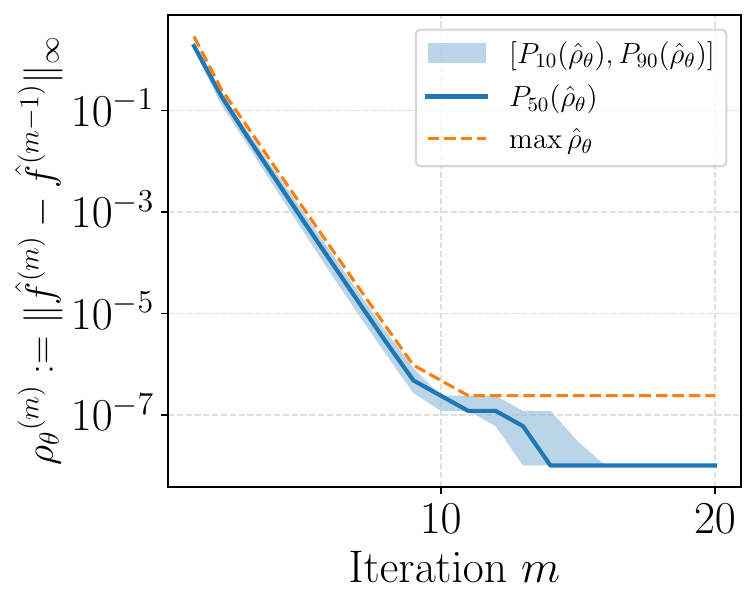}
    \includegraphics[width=0.35\textwidth]{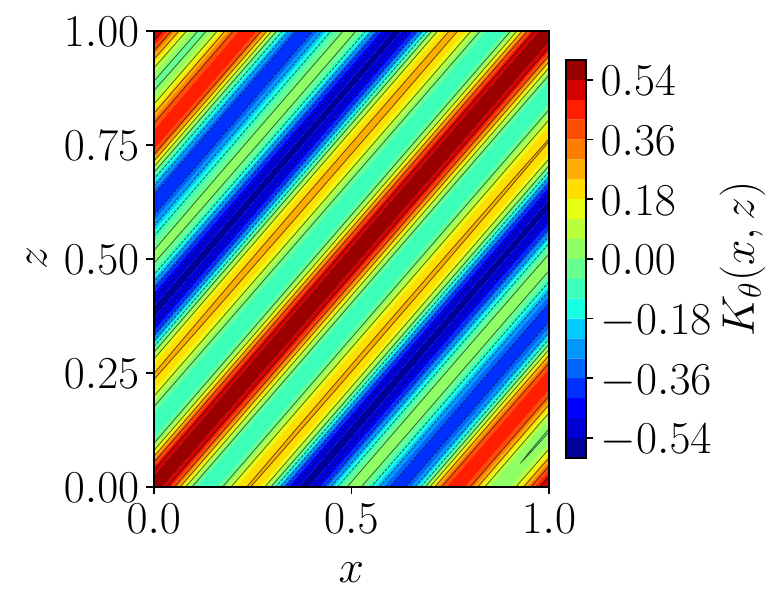}
    \caption{Results for Example \ref{ex-linear}: (Top left) The loss function evolution during training. (Top right) Comparison between the true solution to the integral equation $f_{test}$, for a given $g_{test}$ in the test set, and the estimated solution $\hat{f}_{test}(x)$, as calculated by the forward Fredholm NN with the best (minimum rel. $L_2$ error) learned kernel ${K}_{\theta}$. (Bottom left) The sup-norm of the consecutive layers in the Fredholm NN with the learned kernel, showing that the learned integral operator is a contraction and converges to the fixed point solution. (Right) The contour of the learned kernel model $K_{\theta}.$}
    \vspace{-0.75cm}\label{fig:example-1}
\end{figure}
\end{example}

\subsection{High-dimensional Linear Integral Operators}
We now demonstrate the method in higher dimensions with the example below. 

\begin{example}\label{high-d-example}
Consider the linear FIE \eqref{ie} in $\mathbb{R}^{10}$, and the problem of finding $K : [0,1]^{10} \times [0,1]^{10} \rightarrow \mathbb{R}$. The domain is discretised using $N = 1024$ scrambled Sobol quasi-random nodes and we generate training data $\{g_i,f_i\}_{i=1}^M,$ with $M = 200$, where $g_i$ is of the form \eqref{g-functions} and the corresponding solutions $f_i$ are generated using the forward Fredholm NN with the true kernel:
\begin{equation}\label{eq:kernel-hd}
  K(x, z) = \lambda
  \exp\left(-\sum_{i=1}^{10} \alpha_i\,(x_i - z_i)^2\right)
  \left[1 + \gamma\cos \,\!\bigl(\boldsymbol{\omega} \cdot ({x} - {z})\bigr)\right],
\end{equation}
with $\gamma = 0.5$, anisotropic inverse length scales $\alpha_i$ linearly spaced in the interval $[1.5,3.5]$, and a frequency vector $\boldsymbol{\omega} \in \mathbb{R}^{10}$ drawn as a fixed random unit direction scaled by $6/\sqrt{d}$. The anisotropy in $\alpha_i$ makes the kernel genuinely non-radial. The $1/\sqrt{d}$ scaling of $\boldsymbol{\omega}$ ensures that the argument $\boldsymbol{\omega} \cdot ({x} - {t})$ remains $O(1)$ in all dimensions, and the overall scale $\lambda$ is calibrated so that discrete integral operator $W = K \Delta y$ has spectral norm $\lVert W \rVert_\infty = 0.5$, ensuring the operator contributes meaningfully to the solution while remaining contractive. Note that the training functions $g_i$ are transformed as described in Example \ref{ex-linear}.

The kernel network $K_\theta$ is a $3$-hidden-layer MLP with SiLU activations and widths $[128,128,64]$, receiving as input the concatenated coordinates $({x}, {z}) \in \mathbb{R}^{20}$. For the forward pass, we use a Fredholm NN with 40 hidden layers and $N=1024$ nodes per layer (corresponding to the Sobol samples), and set $\kappa = 1$, as above. The model is trained using the Adam optimizer and a piecewise decaying learning rate starting from $\lambda = 1.0$E$-03$, and 30,000 epochs. Across 30 training instances, the mean and median loss is $1.54$E$-06$ and $1.49$E$-06$ respectively. To test the models we first calculate the relative errors on test functions using the same grid used for training. Across the 30 trained models and the $M_{test} = 100$ test functions, we obtain mean rel. $L_1 = 1.04$E$-03$, rel. $L_2 =1.05$E$-03$ and rel. $L_{\infty} = 1.21$E$-03$. Table \ref{tab:errors_for_examples} shows the median and percentile error metrics across $M_{test} = 100$ test functions on an unseen Sobol grid with $N = 2048$. Comprehensive results are displayed in Fig. \ref{fig:example-high-d}.

\begin{figure}
    \centering
    \includegraphics[width=0.31\textwidth]{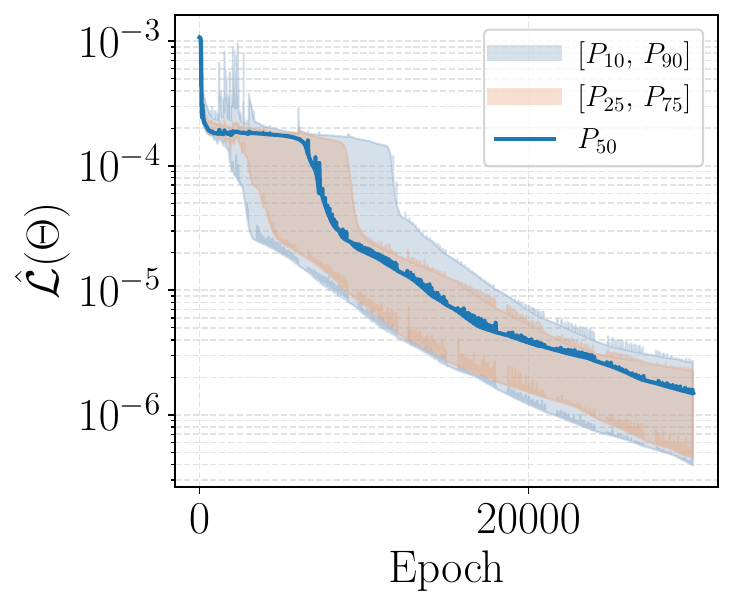}
    \includegraphics[width=0.32\textwidth]{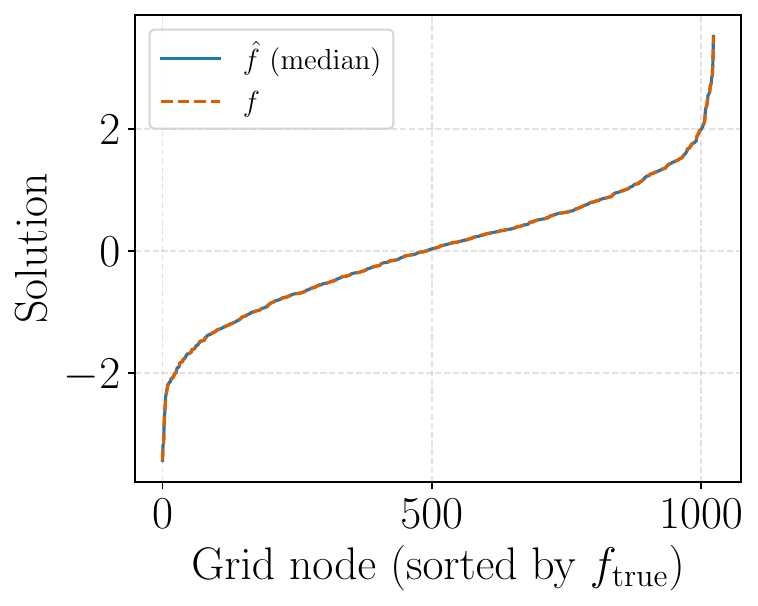}
    \includegraphics[width=0.32\textwidth]{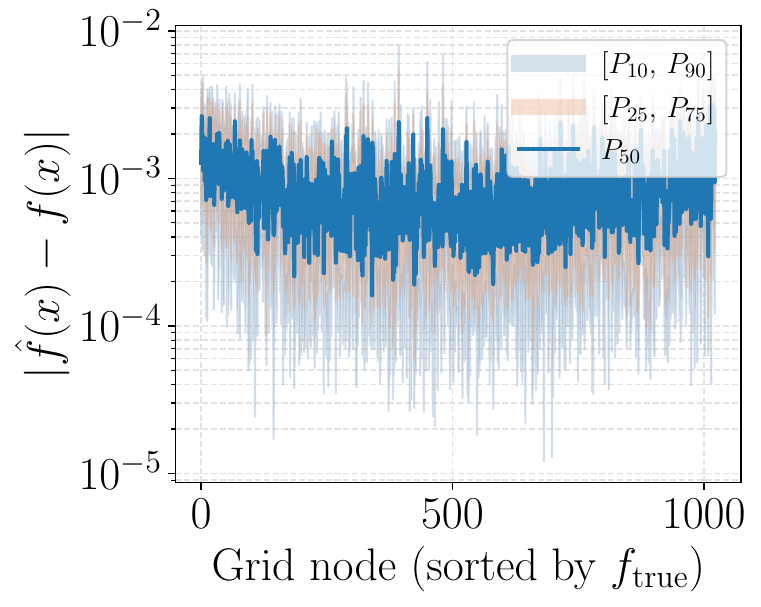}\\
    \includegraphics[width=0.32\textwidth]{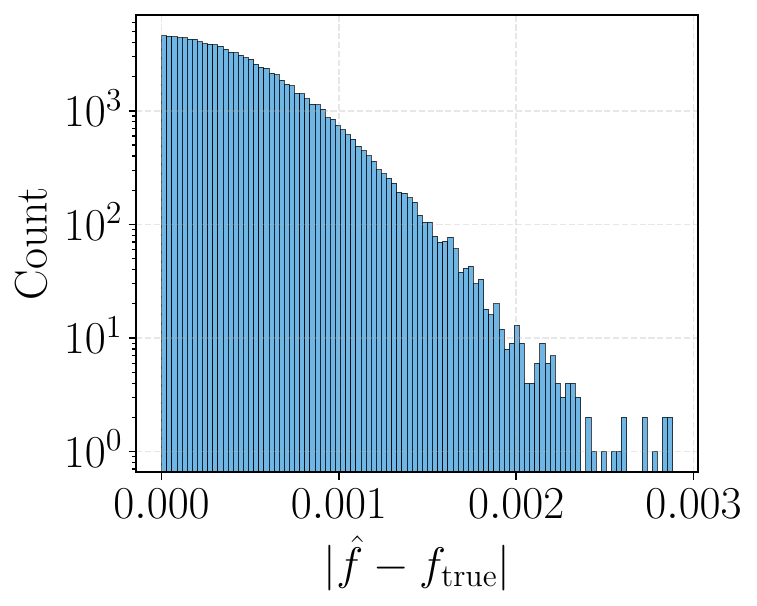}
    \includegraphics[width=0.32\textwidth]{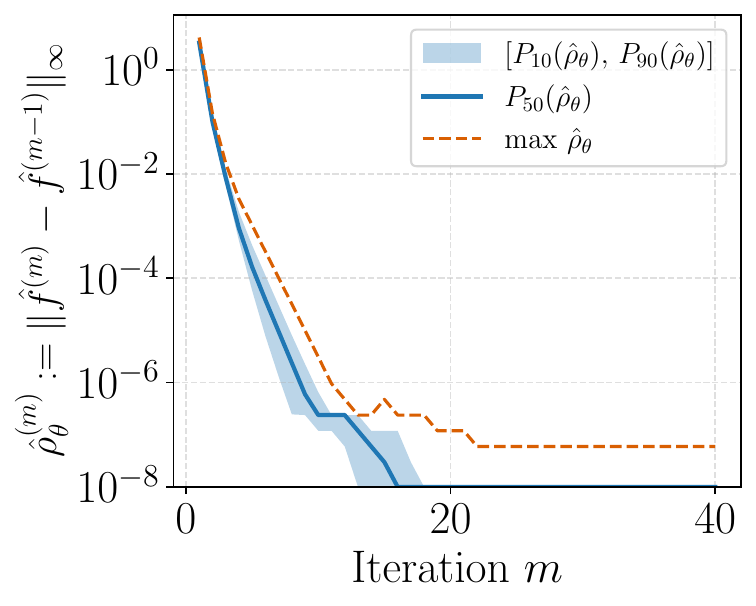}
    \includegraphics[width=0.32\textwidth]{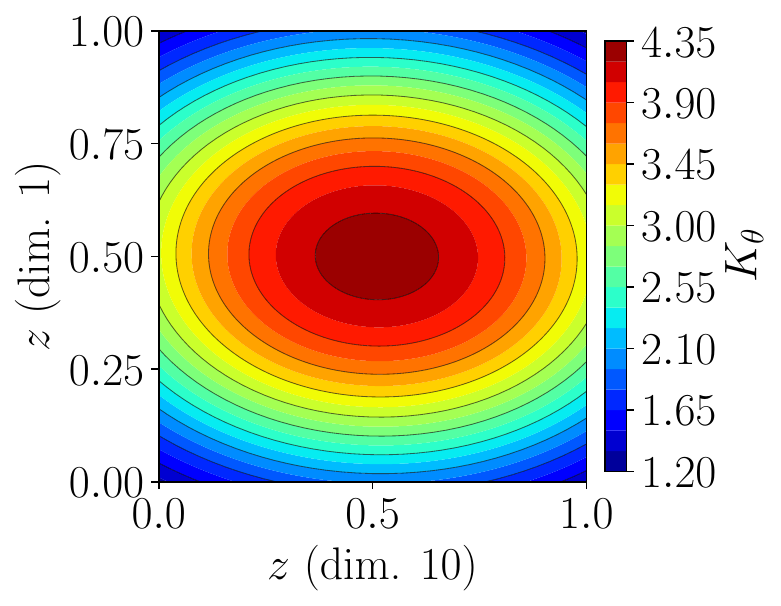}
    \caption{Results for Example \ref{high-d-example}: (Top left) The loss function evolution during training. (Top middle) Comparison between the true solution to the integral equation $f$ in the test set, and the estimated solution $\hat{f}(x)$, as calculated by the forward Fredholm NN with the best (minimum rel. $L_2$ error) learned kernel ${K}_{\theta}$. For the illustration we plot the values at each grid node, with the nodes reordered so that $f(x)$ increases monotonically. (Top right) The plot of the corresponding absolute errors, $|\hat{f}(x) - f(x)|$, at each grid node. (Bottom left) The distribution of the absolute errors for the test functions $g^{test}$, across all $N=1024$ Sobol samples in $\mathbb{R}^{10}$. (Bottom middle) The norm of the consecutive layers in the Fredholm NN with the learned kernel, showing the convergence to the fixed point solution. (Bottom right) The contour of a slice of the learned kernel model $K_{\theta}(x, z) $ with $x = (0.5, \cdots, 0.5)$ and $z = (z_1, 0.5, \dots, 0.5, z_{10})$.}\label{fig:example-high-d}
\end{figure}
\end{example}

\subsection{Non-linear Integral Operators}
For the extension to the non-linear IE of the form \eqref{nl-ie-def} we take advantage of the Recurrent Fredholm NN architecture. 
\par For the numerical implementation, we consider neural network models for the kernel $K_{\theta}(x,y)$, as well as for the non-linearity, ${G}_{\vartheta}: \mathbb{R} \rightarrow \mathbb{R}$. These will be fed into the Reccurent Fredholm NN, $\mathcal{F}(x;g_i, K_{\theta}, {G}_{\vartheta})$, for a given choice of the function $g_i$. The discretized version of the loss function \eqref{loss-non-linear} used for training is given by:
\begin{equation}\label{inverse-loss-nl}
\hat{\mathcal{L}}(\tilde{\theta}) \equiv \hat{\mathcal{L}}(\theta, \vartheta) = \frac{1}{NM} \sum_{i=1}^{M}\sum_{j=1}^N \Big({f}_i(x_j) - \mathcal{F}(x_j;g_i, {K}_\theta, {G}_{\vartheta}) \Big)^2 +\hat{\mathcal{R}}(\theta, \vartheta),
\end{equation}
with $\hat{\mathcal{R}}(\theta, \vartheta) = \lambda_{K}\|K_{\theta} \|^2_F + \lambda_{G}\|G_{\vartheta}\|_F^2$.

\par It is important to note that, in the case of the recurrent Fredholm NN, we have two independent models, $K_{\theta}, G_{\vartheta}$. The standard training approach consists of updating $\theta, \vartheta$ together at each iteration. However, the interaction of the two parameter sets via the integral operator induces a bias, which in turn can lead to poor modelling performance. A better approach is using alternating minimization to optimize the loss function. This approach has been used generally for minimizing non-linear functions (see \cite{bertsekas1999nonlinear}),  and has been also been applied to various machine learning applications such as in \cite{guo2024self, liao2022deep}. According to the Alternating Minimization, each training iteration consists of two phases: one phase where $\theta$ is trained and $\vartheta$ is considered fixed at its current value, and vice versa for the second phase. This is detailed in Algorithm \ref{alg:DTM}. 

\begin{algorithm}[hbt!]\caption{Alternating Minimization for FREDINO training for the non-linear FIE.}\label{alg:DTM}
\begin{algorithmic}
\State {Set total number of epochs.}
\State {Initialize parameters at $\tilde{\theta}^{(0)} = \{\theta^{(0)}, \vartheta^{(0)}\}$ and the corresponding models $K_{\theta^{(0)}}, G_{\vartheta^{(0)}}$.}
\While{$\text{iteration } j \leq \text{total epochs}$}
    \State {\textbf{Step 1. Phase (A)}: Fix $\vartheta^{(j-1)}$ and train to obtain: 
    \begin{equation}\label{inverse-loss-phase-a}
     \theta^{(j)} \approx \operatorname*{argmin}_\theta \hat{\mathcal{L}}(\theta; \vartheta^{(j-1)}),
\end{equation}
where $\mathcal{\hat{L}}(\theta; \vartheta)$ is as given in \eqref{inverse-loss-nl}, but with fixed $\vartheta=\vartheta^{(j-1)}$.
} 
\State {\textbf{Step 2. Phase (B)}: Fix $\theta^{(j)}$ and train to obtain: 
    \begin{equation}\label{inverse-loss-phase-b}
     \vartheta^{(j)} \approx \operatorname*{argmin}_\vartheta \hat{\mathcal{L}}(\vartheta; \theta^{(j)}),
\end{equation}
where $\mathcal{\hat{L}}(\theta; \vartheta)$ is as given in \eqref{inverse-loss-nl}, but with fixed $\theta=\theta^{(j)}$.
} 
\State {\textbf{Step 3.} (Optional) Fine-tune parameters by training jointly $\tilde{\theta} = (\theta, \vartheta)$ (e.g., using L-BFGS or Levenberg - Marquardt.}
    \State {\textbf{Step 4}. Update $j \leftarrow j+1$. }
\EndWhile \\
\Return {Trained parameter set $\tilde{\theta}^{*} =\{\theta^{*}, \vartheta^{*}\}$ and models  $K_{\theta^{*}}, G_{\vartheta^{*}}$.}
\end{algorithmic}
\end{algorithm}

\begin{example}\label{nl-ex}
We consider the problem of learning a non-linear Fredholm Integral operator in $\mathcal{D} = [0,1]$. Given an input $g$, the ground-truth solution $f$ satisfies the nonlinear FIE given by \eqref{nl-ie-def} with
for $x \in [0,1].$ We consider a Gaussian kernel given by:
\begin{equation}
  K(x,z) = C \exp\left(-\frac{(x-z)^2}{2\ell^2}\right),
\end{equation}
with $\ell = 0.2$, and where $C$ is selected so that the spectral norm of the discretized kernel is $0.7$ (again, to ensures that the signal from the kernel is sufficiently strong but remains contractive). The pointwise non-linearity we use is a Gaussian with two peaks, given by:
\begin{equation}\label{example-G}
  G(u) = a_1\exp\left(-\frac{1}{2}\left(\frac{u-m_1}{s_1} \right)\right) + a_2\exp\left(-\frac{1}{2}\left(\frac{u-m_2}{s_2} \right)\right),
\end{equation} 
with $(a_1, a_2) = (0.25, 0.25), (m_1, m_2) = (0.40, -0.40) $ and $(s_1, s_2) = (0.25, 0.15)$.
To produce a diverse set of training inputs sufficiently covering the input domain, whilst also ensuring appropriate scaling for training purposes, the functions $g_i(x)$ are produced as follows: we generate $g'_i$ from \eqref{g-functions}, centered to zero and normalized to unity and we then scale and add an offset, obtaining $g_i = \alpha_i g_i' + \beta_i$, with $\alpha_i, \beta_i \sim \mathcal{U}(-1,1)$. 

For the models $K_{\theta}, G_{\vartheta}$ we use neural networks with 3 and 2 hidden layers, respectively and 64 nodes per layer, both with ReLU activation functions. For the forward pass, a Fredholm NN with $N=500$ nodes per layer and 30 layers was used. For training, we use the alternating minimization scheme as described in Algorithm \ref{alg:DTM}, with regularization constants $\lambda_{K} = \lambda_{G} = 1.0$E$-08$, and a piecewise decaying learning rate from $1.0$E$-03$ to $1.0$E$-06$ with 10,000 total epochs. Across 50 training instances, the mean and median loss is $1.92$E$-08$ and $1.80$E$-08$, respectively. Across a batch of $M_{test} =20$ test functions the models achieve mean rel. $L_1 = 7.79$E$-04$, rel. $L_2 =8.72$E$-04$ and rel. $L_{\infty} = 1.48$E$-03$, when tested on the training grid. As in the previous examples, the median and percentile relative error values when testing on $M_{test} = 100$ test functions on unseen grid of size $N=800$ are given in Table \ref{tab:errors_for_examples}. The graphical results are shown in Fig. \ref{fig:example-3}.


\begin{figure}
    \centering
    \includegraphics[width=0.32\textwidth]{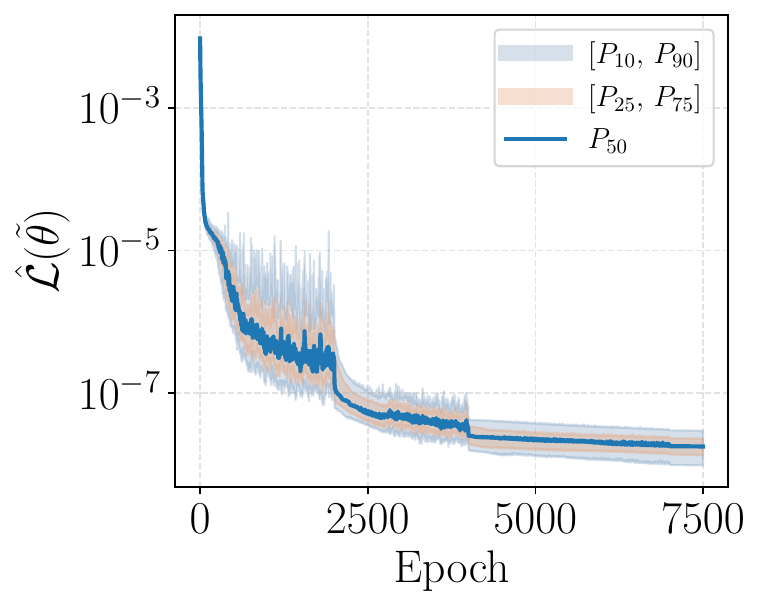}
    \includegraphics[width=0.32\textwidth]{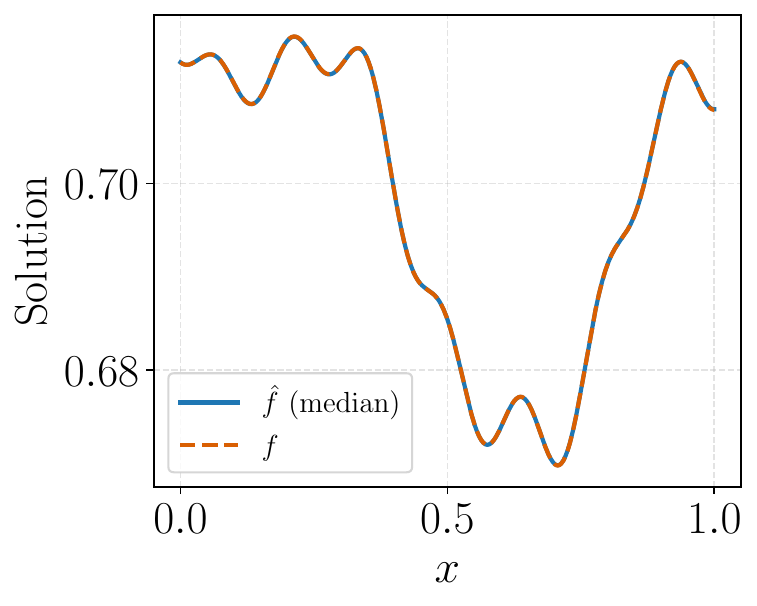}
    \includegraphics[width=0.32\textwidth]{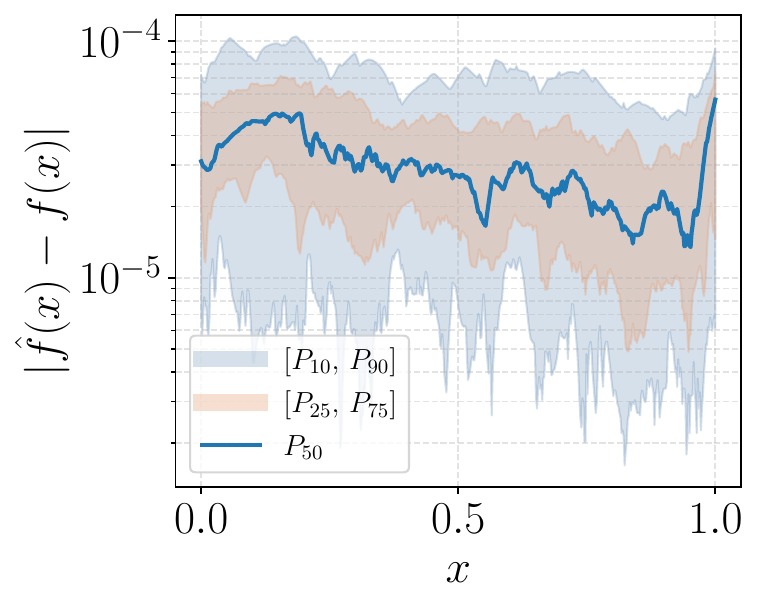}\\
    \includegraphics[width=0.32\textwidth]{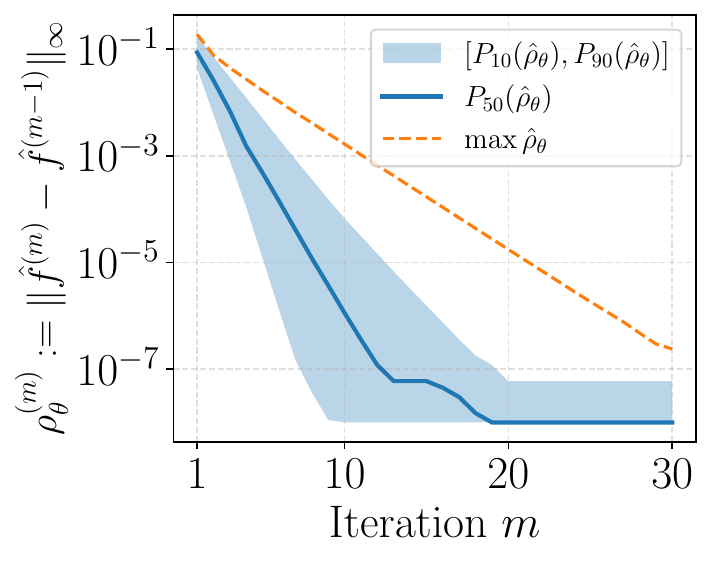}
    \includegraphics[width=0.32\textwidth]{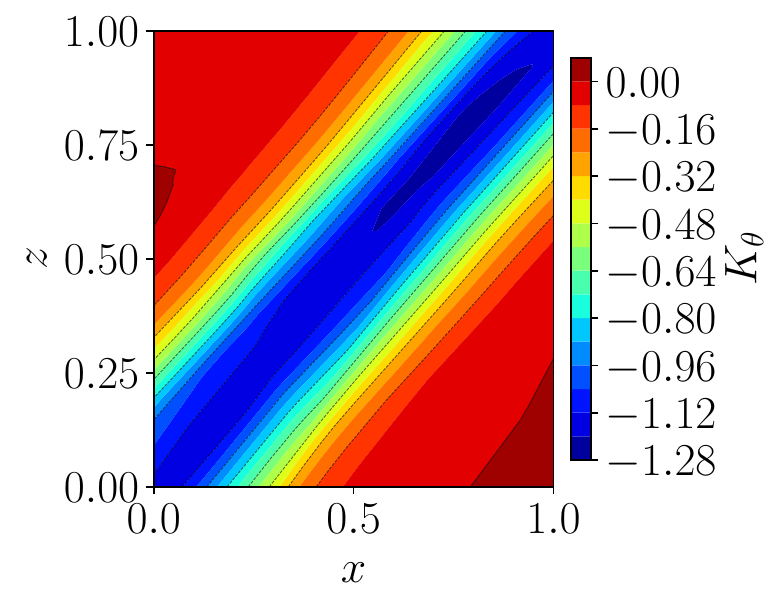}
    \includegraphics[width=0.32\textwidth]{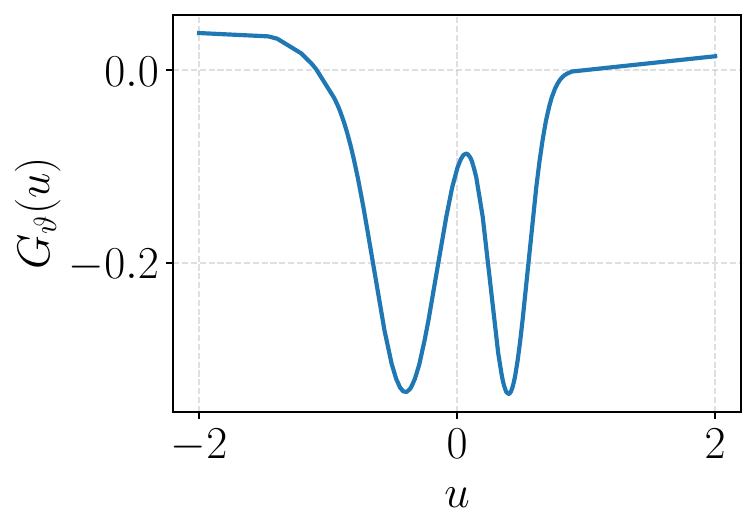}
    \caption{Results for Example \ref{nl-ex}: (Top left) The loss function evolution during training. (Top middle) Comparison between the true solution to the integral equation $f$ for a given $g_i^{test}$ in the test set, and the estimated solution $\hat{f}(x)$, as calculated by the forward Fredholm NN with the best (minimum rel. $L_2$ error) learned kernel $K_{\theta}$. (Top right) The corresponding absolute error plot $|\hat{f}(x) - f(x)|$. (Bottom left) The sup-norm of the consecutive layers in the Fredholm NN with the learned kernel, showing that the learned integral operator is a contraction and converges to the fixed point solution. (Bottom middle - right) The contour of the learned kernel model $K_{\theta}$ and plot of the non-linearity $G_{\vartheta}$ (median values used).}\label{fig:example-3}
\end{figure}
\end{example}


\subsection{High-dimensional Non-Linear Integral Operators}
As in the linear case, we also extend the non-linear FIE example to higher dimensions. We use the same kernel as in the high-dimensional linear FIE example, and the non-linearity is taken as in \eqref{example-G}.

\begin{example}\label{high-d-example-nl}
Consider the linear FIE \eqref{nl-ie-def} in $\mathbb{R}^{10}$, and the problem of finding $K : [0,1]^{10} \times [0,1]^{10} \rightarrow \mathbb{R}$. The modelling parameters are as in example \eqref{high-d-example}, i.e,: we consider a discretised domain with $N = 1024$ scrambled Sobol quasi-random nodes and generate training data $\{g_i,f_i\}_{i=1}^M,$ with $M = 200$, where $g_i$ is of the form \eqref{g-functions} and the corresponding solutions $f_i$ are generated using the forward Fredholm NN with the true kernel as in \eqref{eq:kernel-hd}, with the scale $\lambda$ calibrated so that discrete integral signal is $0.7$. The non-linearity is given as in \eqref{example-G} and the training functions $g_i$ are transformed as described in Example \ref{nl-ex}.

For the kernel network $K_\theta$ we again use a $3$-hidden-layer MLP with SiLU activations and widths $[128,128,64]$, with input $({x}, {z}) \in \mathbb{R}^{20}$. The model for the non-linearity $G_{\vartheta}$ is a neural network with two hidden layers and 64 nodes per layer with a SiLU activation function. For the forward pass, we use a Fredholm NN with 30 hidden layers and $N=1024$ nodes per layer. Training is done with the two-phase approach described in \ref{alg:DTM} using the Adam optimizer and a piecewise decaying learning rate starting from $\lambda = 1.0$E$-03$, and 10,000 epochs, followed by an L-BFGS fine tuning with 500 epochs. Across 30 training instances, the mean and median loss is $6.58$E$-07$ and $4.82$E$-07$, respectively. The relative errors calculated on the same grid used for training, and aggregated over the 30 model instances and $M_{test} = 100$ test functions result in mean rel. $L_1 = 1.41$E$-03$, rel. $L_2 =1.82$E$-03$ and rel. $L_{\infty} = 8.36$E$-03$. As in the previous examples, in Table \ref{tab:errors_for_examples} we also provide the median and percentile error metrics across $M_{test} = 100$ test functions on an unseen Sobol grid with $N = 2048$. The results are also displayed in Fig. \ref{fig:example-high-d-nl}.

\begin{figure}
    \centering
    \includegraphics[width=0.24\textwidth]{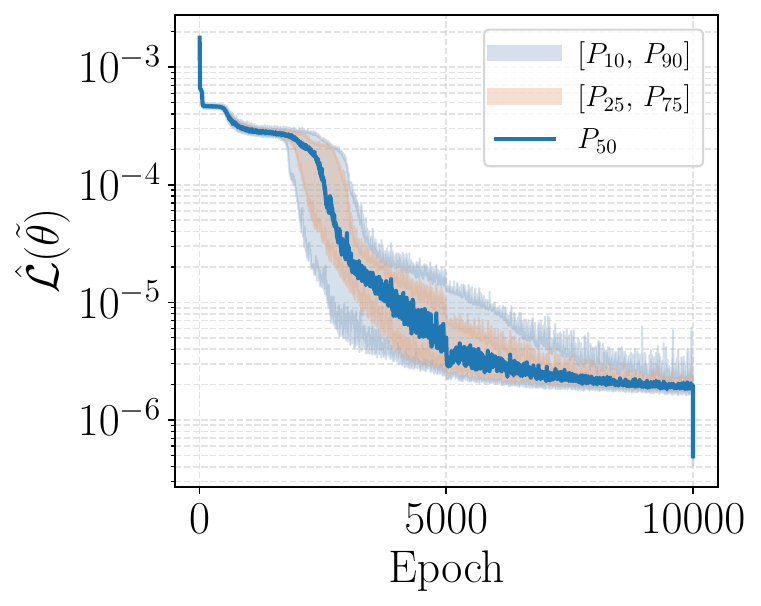}
    \includegraphics[width=0.24\textwidth]{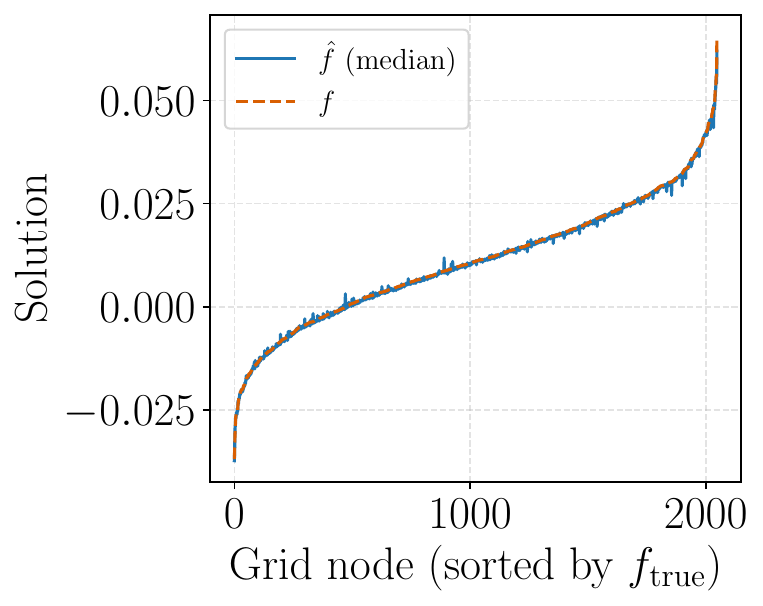}
    \includegraphics[width=0.24\textwidth]{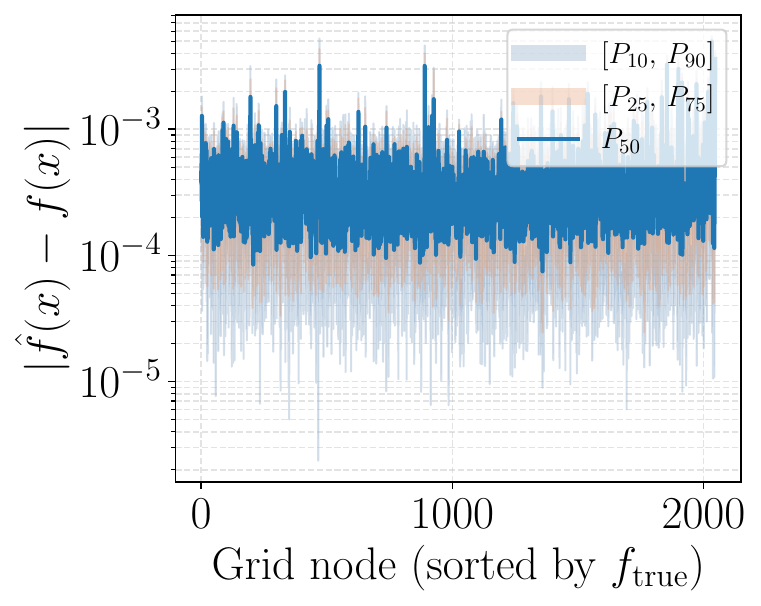}
    \includegraphics[width=0.24\textwidth]{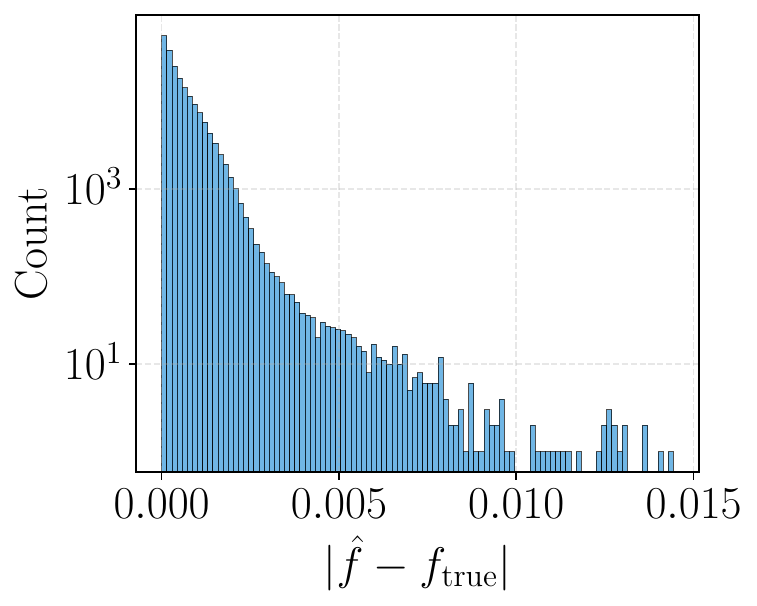}\\
    \includegraphics[width=0.3\textwidth]{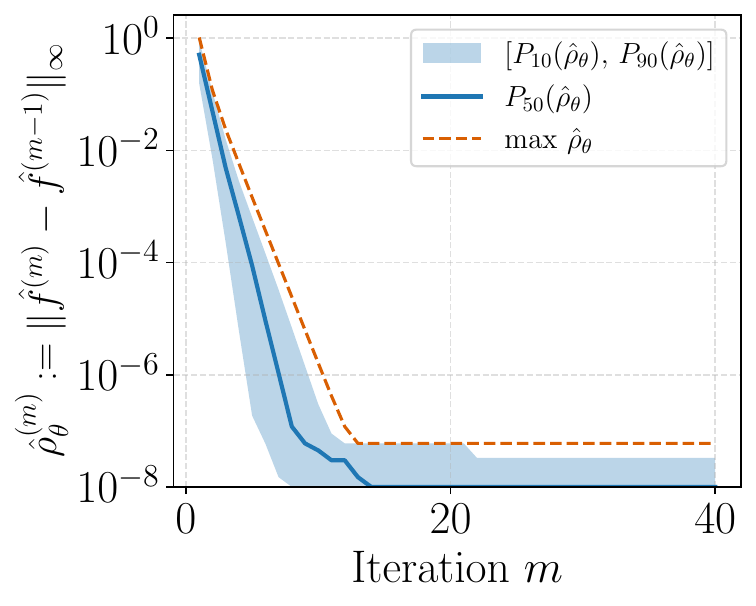}
    \includegraphics[width=0.3\textwidth]{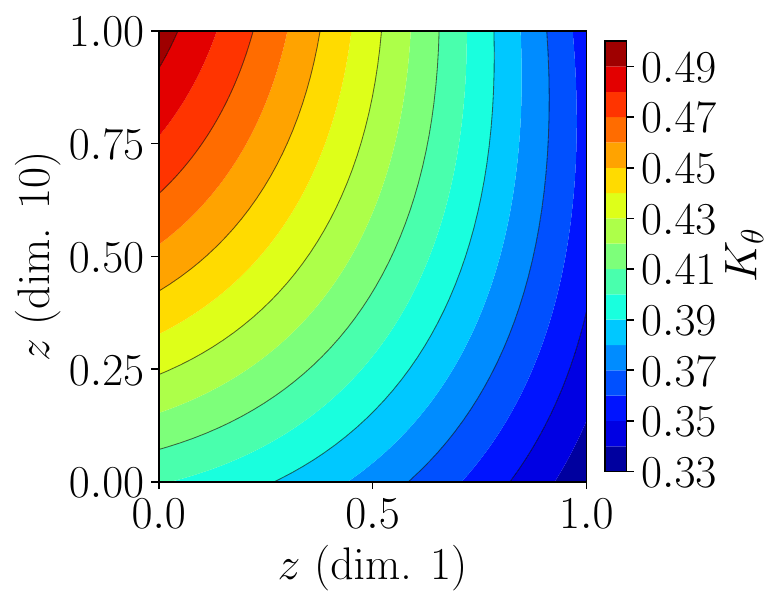}
    \includegraphics[width=0.3\textwidth]{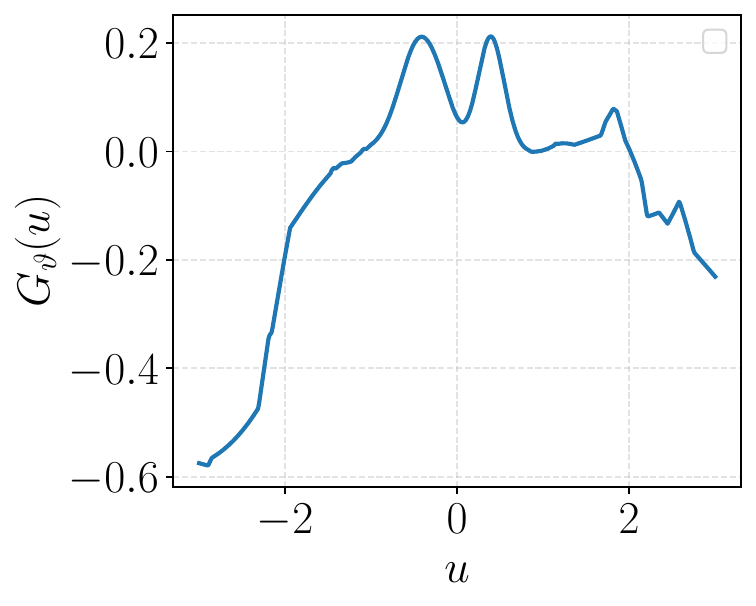}
    \caption{Results for Example \ref{high-d-example-nl}: (Top row, from left to right) a) The loss function evolution during training. b) Comparison between the true solution to the integral equation $f$ in the test set, and the estimated solution $\hat{f}(x)$, as calculated by the forward Fredholm NN with the best (minimum rel. $L_2$ error) learned kernel ${K}_{\theta}$ and non-linearity $G_{\vartheta}$. Again, for illustration we plot the values at each grid node, with the nodes reordered so that $f(x)$ increases monotonically. Here, we use the unseen test grid. c) The plot of the corresponding absolute errors, $|\hat{f}(x) - f(x)|$. d) The distribution of the absolute errors for the test functions $g^{test}$, across all $N=1024$ Sobol samples in $\mathbb{R}^{10}$. (Bottom left) The norm of the consecutive layers in the Fredholm NN with the learned kernel, showing the convergence to the fixed point solution. (Bottom middle) The contour of a slice of the learned kernel model $K_{\theta}(x, z) $ with $x = (0.5, \cdots, 0.5)$ and $z = (z_1, 0.5, \dots, 0.5, z_{10})$. (Bottom left) The plot of the learned non-linearity $G_{\vartheta}$ (median values used).}\label{fig:example-high-d-nl}
\end{figure}
\end{example}

\begin{table}[h]
\centering
\begin{tabular}{l|cc|cc|cc}
\toprule
 & \multicolumn{2}{c}{Rel $L_1$} & \multicolumn{2}{c}{Rel $L_2$} & \multicolumn{2}{c}{Rel $L_\infty$} \\
\cmidrule(lr){2-3}\cmidrule(lr){4-5}\cmidrule(lr){6-7}
\textbf{Example} & \textbf{Median} & \textbf{$[P_{10}, P_{90}]$} 
                 & \textbf{Median} & \textbf{$[P_{10}, P_{90}]$} 
                 & \textbf{Median} & \textbf{$[P_{10}, P_{90}]$} \\
\midrule
\ref{ex-linear} 
  & 4.15E-04 & [1.84E-04, 1.00E-03]
  & 4.38E-04 & [1.97E-04, 1.05E-03]
  & 6.31E-04 & [3.07E-04, 1.48E-03] \\
\ref{high-d-example} 
  & 7.31E-04 & [4.44E-04, 1.12E-03]
  & 7.31E-04 & [4.53E-04, 1.17E-03]
  & 1.06E-04 & [5.47E-04, 2.41E-03] \\
\ref{nl-ex} 
  & 2.29E-04 & [3.91E-05, 1.12E-03]
  & 2.75E-04 & [4.57E-05, 1.26E-03]
  & 6.11E-04 & [8.35E-05, 1.87E-03] \\
\ref{high-d-example-nl}
  & 1.82E-03 & [2.17E-03, 4.68E-03]
  & 2.63E-03 & [2.87E-03, 6.20E-03]
  & 2.12E-02 & [1.60E-02, 4.18E-02] \\
  
\bottomrule
\end{tabular}
\caption{For each model istance, $j = 1,\dots, R$, the relative error metrics are computed per test function, $g_i, i = 1,\dots, M_{test}$ on the unseen grid of size $N$. The statistics are reported across all $R \times M_{test}$ model and test function combinations.}
\label{tab:errors_for_examples}
\end{table}

\subsection{Learning semi-linear elliptic PDE solution operator}
Finally, we implement the approach detailed in section \ref{pde-sec}. Here, the dataset consists of the set of boundary condition and solution functions $\{g_i(x^*_j), u_i(x_j)\}$, $i= 1, \dots, M$, $j=1,\dots N$, and where $x_j = (r_j \cos(\theta_j), r_j \sin(\theta_j)) \in \Omega$ and $x^*_j = (\cos(\theta_j), \sin(\theta_j))\in \partial \Omega$. The model $\Phi_{\theta}$ is trained by minimizing the discretized version of \eqref{loss-pfnn} (equivalently \eqref{loss-pfnn-2}):
\begin{equation}
     \hat{\mathcal{L}}(\theta) = \frac{1}{MN} \sum_{i = 1}^M \sum_{j = 1}^N \big(u_i(x_j) - \mathcal{P}^{\mathcal{F}}(x_j;g,\Phi_{\theta})\big)^2 + \hat{\mathcal{R}}(\theta).
\end{equation}
We implement this approach using a modified example of that studied in \cite{georgiou2025fredholm2}.

\begin{example}\label{ex-nl-pde}
Consider now the 2D PDE:
\begin{eqnarray}\label{nl-pde}
\begin{cases}
 \mathcal{A} u(x)  = \psi(u(x),x), \quad x \in \Omega= \{(x_1,x_2 )\in \mathbb{R}^2 : x_1^2 + x_2^2 \leq 1 \} \\ 
u(x) = g(x), \quad x \in \partial \Omega,
\end{cases}
\end{eqnarray}
with $\psi(u, x)= \tanh(u) - e^{1-x_1^2-x_2^2} + 4$, and the problem of learning the elliptic operator $\mathcal{A}$. For the basis of our model we consider the (elliptic) Helmholtz operator $(\Delta - \lambda)$ and learn an appropriate correction term to the fundamental solution. Hence, we will use the representations as given in \eqref{bie1} and \eqref{bie2}, with $\tilde{\psi}(u,x) = -\lambda u + \psi(u,x)$.
\par To ensure consistency with the ``physics'' of the potential integral and the jump condition, we consider a model that exhibits the weak singularity as we approach the boundary. This is achieved by using the model that depends on the fundamental solution of the Helmholtz operator (which depends on $K_0(r)$, with $r=|x-y|$, which exhibits a singularity as $r\rightarrow 0$). In order to avoid numerical issues due to the singularity, which can affect the minimization algorithm during model training, we use a smoothed approximation of the function, by taking into account its asymptotics (\cite{abramowitz1948handbook}). In particular, we use: 
\begin{equation}
    \tilde{K}_0(r) \approx \big(1-\sigma(r)\big)\left(-\ln(\frac{r}{2}) - \gamma \right) + \sigma(r)\left(\sqrt{\frac{\pi}{2r}}e^{-r} \right),
\end{equation}
where $\gamma$ is the Euler-Mascheroni constant and $\sigma(r) = \frac{1}{1+ e^{-5(r-1.5)}}$ is the transformed sigmoid function (of course, many different configurations are possible). This expression follows from the fact that $K_0(r) \approx -\ln(\frac{r}{2})-\gamma + O(r^2 \ln r)$, as $r\rightarrow 0$ and 
$K_0(r) \approx \sqrt{\frac{\pi}{2r}}e^{-r}(1 + O(1/r))$, as $r\rightarrow \infty$. 
\par We therefore use $\Phi(x,y) \approx -\frac{1}{2\pi}\tilde{K}_0(\sqrt{\lambda}|x-y|)$, and consider a shallow neural network for the correction term, so that the final model is a linear combination of the form:
\begin{equation}
    \Phi_{\theta}(x,y) = \Phi(x,y) + \alpha C_{\theta}(x,y),
\end{equation}
where $\alpha \in (0,1)$ is the mixing constant (note that we let $\alpha$ be a learnable parameter to enhance model flexibility) and $C_{\theta}$ is the trainable model that captures the correction for the given nonlinear PDE. The derivative $\frac{\partial \Phi_{\theta}}{\partial n_y}$ can then be calculated using automatic differentiation. 
\par For our experiments, $C_{\theta}$ is a neural network with 2 hidden layers, 64 neurons per layer and a $\tanh$ activation function. For training, we use $M = 200$ realizations of the boundary functions $g_i(x^*)$. These functions are generated from \eqref{g-functions}, transformed to have zero mean and such that $\| g_i\|_{\infty} = \alpha_i$, with $\log(\alpha_i) \sim \mathcal{U}(\log(0.05), \log(2.0))$. The corresponding solutions for training, $u_i(x)$, are generated using the iterative process \eqref{nl-pde2} as described in \cite{georgiou2025fredholm2}. The model is trained for 200 epochs, using a piecewise decaying learning rate starting at $1.0$E$-03$ and reaching $1.0$E$-06$. Finally, we use a light regularization constant $\lambda_K = 1.0$E$-10$. Across 20 training instances, the mean and median training loss is $3.35$E$-06$ and $2.78$E$-06$. We use $M_{test }= 20$ test boundary functions $g_i^{test}$ and perform two tests: firstly, we use the learned $\Phi_{\theta}$ along with the true solution $u_i^{test}$ to calculate the solution $\hat{u}_i^{test}$ via \eqref{bie1} and compare to the true data. We refer to this as the reconstruction test.
Secondly, we use the learned $\Phi_{\theta}$ to construct the PFNN $\mathcal{P}^{\mathcal{F}}(\cdot;g^{test},\Phi_{\theta})$, and perform the full Picard iteration scheme to solve the PDE and compare to the true solutions (i.e., true solver test). The error metrics for the two tests are given in Table \ref{tab:helmholtz_errors}.
In Fig. \ref{fig:example-4} we display the loss function, the fixed point convergence of the learned boundary function $\beta$ and the the learned kernel $\Phi_{\theta}$. These plots show that we have obtained the required properties, i.e., contractiveness and the numerical blow-up of the fundamental solution on the diagonal. Finally, we also present an example of the learned solution contour and the corresponding absolute error, showing that the larger absolute errors appear near regions where the solution approaches zero, as expected.
\begin{figure}
    \centering
    \includegraphics[width=0.31\textwidth]{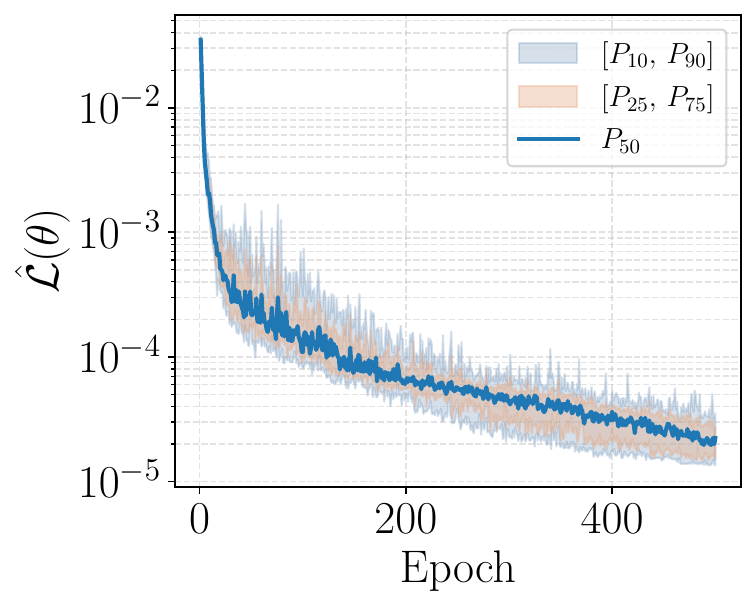}
    \includegraphics[width=0.32\textwidth]{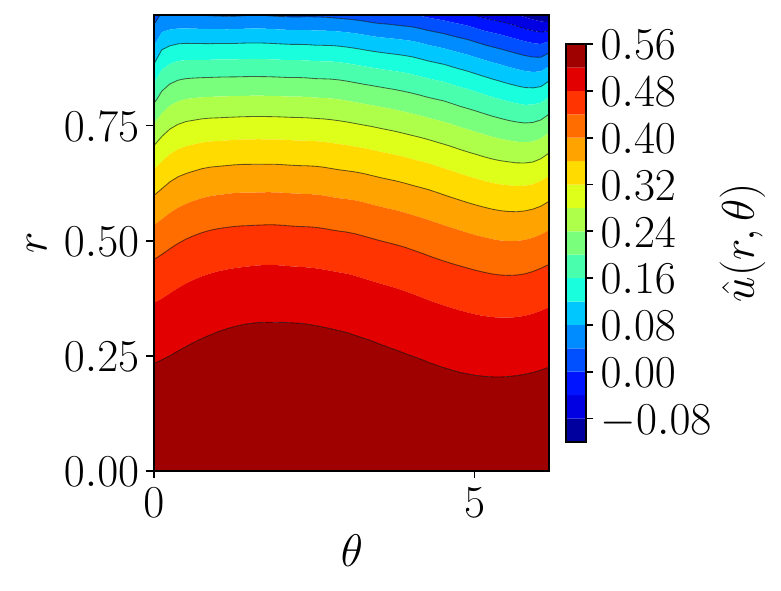} 
    \includegraphics[width=0.31\textwidth]{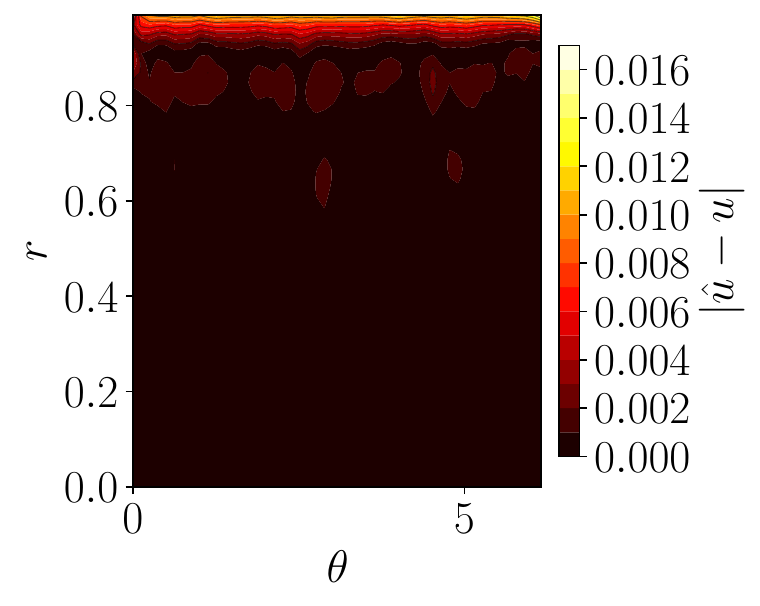}
    \includegraphics[width=0.32\textwidth]{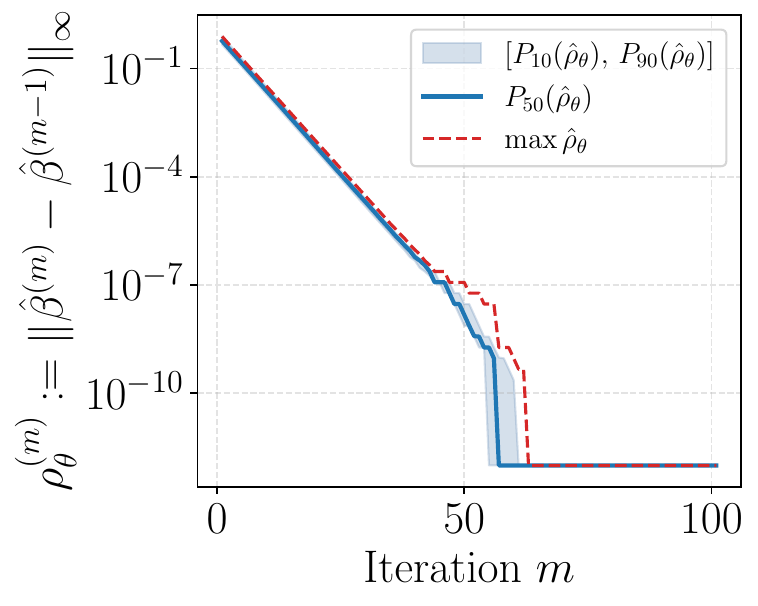}
    \includegraphics[width=0.32\textwidth]{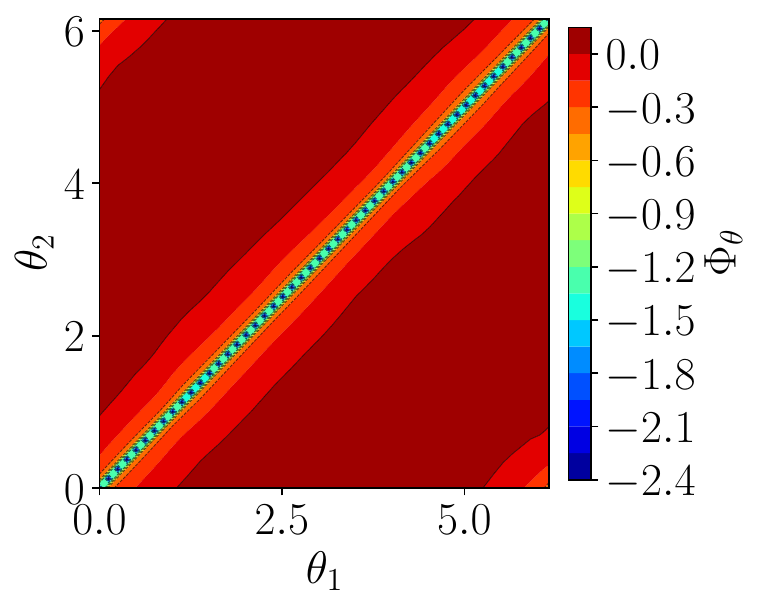}
    \caption{Results for Example \ref{ex-nl-pde}: (Top left) The loss function evolution during training. (Top middle) Estimated $\hat{u}(r,\theta)$, as calculated by the forward Potential Fredholm NN with the best (minimum rel. $L_2$ error) learned model ${\Phi}_{\theta}$, for a given test boundary function $g^{test}$. (Top right) Absolute error between the approximated solution $\hat{u}$ and the true test data $u$, for $g^{test}$. (Bottom left) The sup-norm of the consecutive layers in the Fredholm NN with the learned kernel, showing that the learned integral operator is a contraction and converges to the fixed point solution. (Right) The contour of the learned kernel model $K_{\theta}.$} \label{fig:example-4}
\end{figure}
\end{example}
\begin{table}[h]
\centering
\begin{tabular}{ll|cc|cc}
\toprule
 & & \multicolumn{2}{c|}{Rel $L_1$} & \multicolumn{2}{c}{Rel $L_2$} \\
\cmidrule(lr){3-4}\cmidrule(lr){5-6}
& \textbf{Test} & \textbf{Median} & \textbf{$[P_{10}, P_{90}]$} 
& \textbf{Median} & \textbf{$[P_{10}, P_{90}]$} 
\\
\midrule
\multirow{2}{*}{\rotatebox[origin=c]{90}{\scriptsize Run}}
& Recon. 
  & 4.39E-03 & [3.22E-03, 5.28E-03]
  & 7.72E-03 & [6.70E-03, 9.34E-03]
   \\
& Solver
  & 4.78E-03 & [3.48E-03, 5.91E-03]
  & 7.86E-03 & [6.74E-03, 9.51E-03]
  \\
\midrule
\multirow{2}{*}{\rotatebox[origin=c]{90}{\scriptsize Sample}}
& Recon. 
  & 3.84E-03 & [2.47E-03, 7.18E-03]
  & 6.74E-03 & [5.22E-03, 1.22E-02]
   \\
& Solver
  & 4.20E-03 & [2.60E-03, 7.92E-03]
  & 7.05E-03 & [5.24E-03, 1.25E-02]
  \\
\bottomrule
\end{tabular}
\caption{Errors for Example \ref{ex-nl-pde}: per-run metrics are calculated over the 20 averages from the 20 test functions. Per-sample statistics are calculated over all $20 \times 20 = 400$ evaluations. }
\label{tab:helmholtz_errors}
\end{table}

\section{Conclusions \& Discussion}\label{sec6}
In this work, building on the Fredholm  NN framework that we introduced recently in \cite{georgiou2025fredholm, georgiou2025fredholm2}, we introduce the Fredholm Integral Neural OperatorS (FREDINOs) to learn linear and non-linear contractive integral operators in arbitrary dimensions. We also show how the framework can be exploited in the setting of linear and non-linear elliptic PDEs. The satisfaction of the contraction property is achieved by the forward pass through the Fredholm NN that is performed during the training step. Hence, in this way, we are able to obtain the model that approximates the integral operator and simultaneously obtain the full structure of the trainable Fredholm NN that is used in the forward pass. This is equivalent to learning the full fixed-point algorithm. The approach highlights how incorporating explaibable numerical analysis-informed deep neural network architectures can allow us to obtain functions and operators that adhere to specific mathematical properties. We demonstrate the performance of FREDINOs via linear and non-linear integral operators of arbitrary dimensions, as well as in the context of a semi-linear elliptic PDE in 2D.
\par From the proposed methods, interesting future research directions also arise, with one of the most interesting being the application of FREDINOs to higher-dimensional PDEs. In particular, for $d\geq 3$ the Potential Fredholm Neural Network can still be constructed using the existing theory (see \cite{fabes1978potential, verchota1984layer}). However, the numerical scheme requires special treatment, as the fundamental solutions change from logarithmic functions of the form to $\Phi(x,y) \propto |x-y|^{-(d-2)}$. Hence, in addition to the known curse of dimensionality problem, we also have to deal with the dimension-dependent singular kernels. This will be the focus of future work.  

\section*{Acknowledgments}
K.C. G. acknowledges support from the PNRR MUR Italy, project PE0000013-Future Artificial Intelligence Research-FAIR. 
C.S. acknowledges partial support from the PNRR MUR Italy, projects PE0000013-Future Artificial Intelligence Research-FAIR \& CN0000013 CN HPC - National Centre for HPC, Big Data and Quantum Computing, and from the GNCS group of INdAM. A.N.Y. acknowledges the use of resources from the Stochastic Modelling and 
Applications Laboratory, AUEB.  
\bibliographystyle{plain}
\bibliography{Fredholm_Neural_Operators}

\end{document}